\documentclass[11pt, letterpaper]{article}
\usepackage{bm}
\usepackage{amsmath,amsthm,verbatim,amssymb,amsfonts,amscd, graphicx, graphics,physics}
\usepackage[mathscr]{eucal}
\usepackage{indentfirst}
\usepackage{booktabs}
\usepackage{pict2e}
\usepackage{epic}
\numberwithin{equation}{section}
\usepackage[footnotesize,bf]{caption}
\usepackage[a4paper, textwidth=155mm,top=27mm,bottom=29mm]{geometry}
\usepackage{epstopdf}
\usepackage{mathtools}
\usepackage[dvipsnames]{xcolor}
\usepackage{algorithmic}
\usepackage[ruled,vlined]{algorithm2e}
\usepackage{authblk}
\usepackage{placeins}

\theoremstyle{plain}
\newtheorem{theorem}{Theorem}[section]
\newtheorem{lemma}{Lemma}
\newtheorem{proposition}{Proposition}

\theoremstyle{definition}
\newtheorem{definition}{Definition}
\newtheorem{assumption}{Assumption}
\newtheorem{remark}[theorem]{Remark}

\usepackage{threeparttable}
\usepackage{subcaption} 
\usepackage{siunitx}
\usepackage{multirow}

\usepackage[backend=bibtex,style=alphabetic, firstinits=true,maxbibnames=9,natbib=true,url=false,sorting=nyt,doi=true,backref=false]{biblatex}
\renewbibmacro{in:}{\ifentrytype{article}{}{\printtext{\bibstring{in}\intitlepunct}}}

\usepackage[colorlinks,linkcolor =brown, citecolor=blue]{hyperref} 

\usepackage{cleveref}

\def\R{{\mathbb R}}
\def\rr{{\mathscr{R}}}

\def\P{{\mathbb P}}

\def\M{{\mathcal M}}

\def\G{{\mathcal G}}
\def\HH{{\mathcal H}}

\def\lll{{\widetilde{l}}}

\def\med{{\mathrm{med}}}

\def\sp{{\mathsf{s}}}
\def\uuu{{\widetilde{\bm u}}}
\def\uu{{\widehat{\bm u}}}
\def\vv{{\widetilde{\bm v}}}

\def\supp{{\mathrm{supp}}}

\renewcommand{\bar}{\overline}

\newcommand{\E}{\mathbb{E}}

\renewcommand{\d}[1]{\ensuremath{\operatorname{d}\!{#1}}}
\DeclareMathOperator{\sgn}{sgn}
\DeclareMathOperator{\diag}{diag}

\DeclareMathOperator*{\argmax}{arg\,max}

\usepackage{setspace}

\begin{document}

\title{Bradley--Terry model under general comparison graphs}

\date{\today}

\setstretch{1.18}

\author[1]{Ruijian Han}
\author[1]{Yuanhang Luo}
\author[2]{Yiming Xu}
\affil[1]{Department of Data Science and Artificial Intelligence, The Hong Kong Polytechnic
University, Hung Hom, Kowloon, Hong Kong SAR}
\affil[2]{Department of Mathematics, University of Kentucky, Lexington, KY, 40506, USA}

\maketitle

\begin{abstract}
The Bradley--Terry model is a parametric model for ranking from pairwise comparisons. Existing asymptotic theory for the maximum likelihood estimator (MLE) in regimes where the number of objects grows often requires homogeneity assumptions or compatibility conditions on comparison graphs, which limits its applicability to many practical settings. In this work, we establish uniform consistency of the MLE under general deterministic comparison designs. Our pairwise error bound consists of a pair-specific term governed by the effective resistance between the corresponding objects and a global term governed by the spectral gap of the unnormalized graph Laplacian. The bound is tight up to logarithmic factors for certain graph sequences. For any prespecified sequence of pairs satisfying an additional balancing condition, we further establish asymptotic normality of the corresponding estimated utility differences. As an application, we obtain uniform consistency for independent-edge random graph designs whenever the minimum edge probability exceeds the Erd\H{o}s--R\'enyi connectivity threshold by a logarithmic factor, as well as asymptotic normality under additional balancing conditions.
\end{abstract}


\section{Introduction}

Ranking objects from pairwise comparisons is a common task in sports analytics~\citep{baker2014dynamic, bozoki2016application}, psychometrics~\cite{thurstone1927law}, economics~\cite{mcfadden1973conditional}, and machine learning~\cite{Christiano2017, Rafailov2023, Sun2025}. The Bradley--Terry (BT) model is a classical approach to aggregating pairwise comparison outcomes into a global ranking. First introduced by \citet{MR1545015} and later formulated by \citet{MR0070925}, the model associates each object $i$ with a positive strength parameter $\pi_i=e^{u_i}>0$. For a comparison between objects $i$ and $j$, the probability that $i$ is preferred to $j$ is given by
\begin{align*}
\P(i\succ j)
=\frac{\pi_i}{\pi_i+\pi_j}=
\sigma(u_i-u_j),
\qquad
\sigma(u)\coloneqq (1+e^{-u})^{-1}.
\end{align*}
Here, $i\succ j$ denotes that $i$ is preferred to $j$, with the interpretation of ``preference'' depending on the application. The strengths $\pi_i$, or equivalently the utilities $u_i$, induce a global ranking of the objects. In the remainder of the paper, we use the utility parametrization. For further background on the BT model, see \cite{hamilton2025many}.

This work studies the asymptotic theory of the MLE in the BT model. Given independent pairwise comparison outcomes among $n$ objects, the MLE maximizes the joint likelihood conditional on the comparison graph, subject to an identifiability constraint. This constraint is necessary because the BT model is invariant under a common shift of all utilities. Our main question is under what conditions the MLE uniformly consistently estimates the underlying utilities as $n\to\infty$. Throughout, we denote the ground-truth utility vector and the MLE by $\bm u^*=(u_1^*,\ldots,u_n^*)^\top$ and $\uu=(\widehat{u}_1,\ldots,\widehat{u}_n)^\top$, respectively, and impose the centering constraint $\bm 1^\top\bm u^*=\bm 1^\top\uu=0$.

For fixed $n$, the MLE is consistent if the graph is connected and the minimum number of comparisons over its edges diverges; see, e.g., \cite[Theorem 5.1]{han2026convergence}. In many modern applications, however, the number of objects is large and more naturally treated as increasing. This motivates a high-dimensional regime in which both the number of objects and the volume of comparisons tend to infinity. To assess whether all utilities can be estimated simultaneously, we study the asymptotic theory of the MLE under the uniform error, namely, $\|\uu-\bm u^*\|_\infty$. This large-$n$ regime has attracted substantial interest in the recent study of the BT model \cite{fang2026recent}.

\paragraph{Related work.}
The uniform consistency of the MLE in the BT model depends critically on the comparison graph. \citet{MR1724040} established uniform consistency for the complete comparison graph, which was subsequently extended to certain densely connected graphs \cite{MR2987494}. \citet{han2020asymptotic} further advanced the theory for sparse designs, establishing uniform consistency when the comparison graph is sampled from the Erd\H{o}s--R\'enyi model $G(n,p_n)$ with $np_n\geq(\log n)^3$. Under the same random graph model, \citet{chen2022partial} established uniform error bounds under the optimal graph sparsity condition $np_n\geq C\log n$ for some absolute constant $C>0$, matching the order of the Erd\H{o}s--R\'enyi connectivity threshold \cite{MR0125031}. Their analysis builds on the leave-one-out perturbation approach developed in \cite{MR3953449}.

The Erd\H{o}s--R\'enyi model assumes homogeneous edge probabilities, which limits its applicability for comparison networks with substantial heterogeneity or community structure. Establishing uniform consistency under more general comparison designs is of practical relevance yet remains challenging. \citet{pmlr-v180-li22g} derived uniform error bounds for the regularized MLE in terms of the spectral gap of the unnormalized graph Laplacian and the maximum degree ratio. \citet{han2023general} established uniform consistency following a graph-based chaining idea. For stochastic block models (SBMs) \cite{holland1983stochastic} with a fixed number of blocks and a bounded score dynamic range, \citet{han2023general} established uniform consistency when the minimum edge probability is of order $\omega(n^{-1}(\log n)^3)$. Beyond the random graph models treated explicitly in their work, however, verifying certain abstract graph conditions and deriving concrete error rates for a given heterogeneous comparison design remain elusive.
 
From an electrical-network perspective, \citet{chen2023ranking} related pairwise MLE errors to effective resistances in a weighted comparison graph whose Laplacian is the Hessian of the negative log-likelihood evaluated at the true utility vector, following earlier resistance-based analyses \cite{hendrickx2019graph,hendrickx2020minimax}. Their general theorem, however, requires additional compatibility conditions involving auxiliary graph-dependent quantities, which can be difficult to verify for arbitrary comparison designs. Building on this, \citet{yang2024top} obtained more explicit spectral conditions for a weighted MLE, which also yield guarantees for the unweighted MLE when all likelihood weights are set to one. These conditions involve the maximum weighted degree and can be restrictive for imbalanced comparison designs. For general comparison designs, asymptotic theory has also been studied under other error metrics \cite{MR3504618, bong2022generalized} or alternative spectral estimators \cite{fan2023spectral,lee2026entrywise}.

Beyond uniform consistency, the asymptotic normality of the MLE has also been studied, including for the complete graph \cite{MR1724040}, polynomially sparse Erd\H{o}s--R\'enyi graphs \cite{han2020asymptotic}, and Erd\H{o}s--R\'enyi graphs at the optimal sparsity level \cite{gao2023uncertainty}. These works rely on the homogeneity of the underlying comparison graph to carry out fine perturbation arguments. \citet{han2024statistical} further established asymptotic normality for a general class of pairwise comparison graph models, including the BT model as a special case, while allowing mild heterogeneity among the compared objects. As empirically observed by \citet{han2023unified}, a general asymptotic normality result need not hold when the comparison graph is severely imbalanced, even if it is highly connected.

\paragraph{Main results.}

We introduce a new approach to establishing uniform consistency of the MLE under general deterministic comparison designs $\G_n$. 
An informal version of our main result \Cref{thm:main} states that, under suitable conditions (e.g., bounded dynamic range) and assuming $\lambda_2(\bm L_{\G_n})=\omega((\log n)^2)$, with probability at least $1-n^{-2}$, simultaneously for all $1\leq i<j\leq n$,
\begin{align}\label{bdd-intro}
\left|(\widehat{u}_i-\widehat{u}_j)-(u_i^*-u_j^*)\right|&\lesssim
\sqrt{\rr_{\G_n}(i\leftrightarrow j)\log n}
+\frac{(\log n)^2}{\lambda_2(\bm L_{\G_n})},
\end{align}
where $\rr_{\G_n}(i\leftrightarrow j)$ denotes the effective resistance between vertices $i$ and $j$ in the comparison graph $\G_n$, and $\lambda_2(\bm L_{\G_n})$ denotes the spectral gap of its unnormalized Laplacian. 

The first term in \eqref{bdd-intro} arises from linearization and can be viewed as a pair-specific measure of local estimation difficulty, while the second is a remainder term controlled by the global connectivity of the graph. To our knowledge, this is the first uniform pairwise error bound for the MLE under general deterministic comparison designs that combines a pair-specific effective-resistance term with an explicit spectral-gap remainder, without additional degree-imbalance restrictions or auxiliary compatibility conditions. Moreover, the bound is tight up to logarithmic factors for certain graph sequences; see \Cref{thm:bipartite}.

As an application of \eqref{bdd-intro}, consider a random comparison design in which each pair $\{i,j\}$ is selected independently with probability $p_{ij}$, and each selected pair is compared at least once. Conditional on the design, all comparison outcomes are independent. Let $p_{\min}=\min_{1\leq i<j\leq n}p_{ij}$. Our sufficient condition for uniform consistency depends on the edge probabilities only through $p_{\min}$.

\begin{theorem}[Informal version of \Cref{thm:ger}]\label{thm:informal}
Suppose the dynamic range of $\bm u^*$ is bounded and 
$np_{\min}=\omega((\log n)^2)$. Then, with probability tending to one, 
the MLE exists uniquely and
\[
\max_{1\leq i< j\leq n}|(\widehat u_i-\widehat u_j)-(u_i^*-u_j^*)|\lesssim
\sqrt{\frac{\log n}{np_{\min}}}+\frac{(\log n)^2}{np_{\min}}
=o(1).
\]
\end{theorem}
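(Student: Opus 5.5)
The plan is to deduce the statement from the deterministic bound \eqref{bdd-intro} (the main result \Cref{thm:main}) by conditioning on the random comparison graph $\G_n$ and checking that, with probability tending to one, the realized graph satisfies the two graph-dependent quantities appearing there at the right scale. Concretely, I will show that with probability $1-o(1)$,
\[
\lambda_2(\bm L_{\G_n})\gtrsim np_{\min}
\qquad\text{and}\qquad
\max_{i<j}\rr_{\G_n}(i\leftrightarrow j)\lesssim \frac{1}{np_{\min}}.
\]
Since $np_{\min}=\omega((\log n)^2)$, the first bound gives $\lambda_2(\bm L_{\G_n})=\omega((\log n)^2)$. This verifies the hypothesis of \Cref{thm:main} on that event, as well as the existence and uniqueness of the MLE, which I take to be part of that theorem's conclusion under its spectral condition. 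Plugging both bounds into \eqref{bdd-intro} then yields exactly $\sqrt{\log n/(np_{\min})}+(\log n)^2/(np_{\min})$. Because outcomes are conditionally independent given the design, the $1-n^{-2}$ guarantee holds conditionally. Integrating over the good design event, whose probability is $1-o(1)$, gives the unconditional claim.

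The resistance bound follows from the spectral bound. Writing $\bm L^\dagger$ for the pseudoinverse,
\[
\rr_{\G_n}(i\leftrightarrow j)=(\bm e_i-\bm e_j)^\top\bm L^\dagger(\bm e_i-\bm e_j)\le \frac{\|\bm e_i-\bm e_j\|^2}{\lambda_2}=\frac{2}{\lambda_2}.
\]
This holds because $\bm e_i-\bm e_j\perp\bm 1$. So everything reduces to a lower bound on the spectral gap. If the model allows multiple comparisons per edge, I will assume the paper's Laplacian weights edges by comparison counts. Since each selected pair is compared at least once, this only increases $\bm L$ in Loewner order, so it suffices to treat the unweighted Laplacian of the independent-edge graph $A_{ij}\sim\mathrm{Bernoulli}(p_{ij})$.

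The main obstacle is the spectral gap of an inhomogeneous random graph. I would couple the graph to dominate an Erdős–Rényi graph, taking $A_{ij}\ge B_{ij}$ with $B_{ij}\sim\mathrm{Bernoulli}(p_{\min})$ independent. Adding edges only increases the Laplacian in Loewner order, so $\lambda_2(\bm L_{\G_n})\ge\lambda_2(\bm L_{G(n,p_{\min})})$. This monotonicity reduces the problem to the homogeneous case, for which $\lambda_2\ge np_{\min}-C\sqrt{np_{\min}\log n}$ with high probability once $np_{\min}\ge C\log n$. To prove this, I would decompose $\bm L=\E\bm L+(\bm L-\E\bm L)$, where $\E\bm L$ restricted to $\bm 1^\perp$ equals $np_{\min}\bm I$. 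I would then bound the two fluctuation pieces separately. The diagonal degree fluctuations are at most $\sqrt{np_{\min}\log n}$ by Bernstein's inequality and a union bound. The centered adjacency matrix has operator norm $O(\sqrt{np_{\min}})$ by a standard matrix concentration bound, and even the cruder matrix Bernstein bound of $O(\sqrt{np_{\min}\log n})$ suffices here. Both are $o(np_{\min})$, which gives $\lambda_2\ge np_{\min}/2$ with probability $1-o(1)$ and completes the argument.
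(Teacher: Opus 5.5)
Your proposal is correct and follows essentially the paper's route. You couple $\G_n$ to dominate $G(n,p_{\min})$, use Loewner (Rayleigh) monotonicity together with the Erd\H{o}s--R\'enyi spectral gap $\lambda_2\geq np_{\min}/2$, transfer to $\G_n^*$ via \eqref{yh} and \Cref{lm:rp}, and then invoke \Cref{thm:main}; the paper cites the spectral-gap bound from \cite[Lemma 10]{MR3953449} rather than sketching it.

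The only difference is in the pair-specific term. The paper additionally uses degree concentration and \Cref{prop:er} to get the sharper term $\sqrt{\log n/\min\{\bar d_i,\bar d_j\}}$. Your cruder bound $\rr_{\G_n}(i\leftrightarrow j)\leq 2/\lambda_2$, i.e., \eqref{main:glob}, is already exactly what the stated $p_{\min}$-version requires.
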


Thus uniform consistency holds as soon as the minimum comparison probability exceeds the Erd\H{o}s--R\'enyi connectivity threshold $\log n/n$ by a logarithmic factor. No upper bound on $p_{ij}$, block structure, degree-balance condition, or compatibility condition is required, so the design may be arbitrarily heterogeneous above the floor $p_{\min}$.

Our bound \eqref{bdd-intro} also yields an asymptotic normality result for any prescribed sequence of pairs under a suitable balancing condition. In particular, \Cref{thm:norm} informally states that, for any fixed sequence of pairs $(i_n, j_n)$, asymptotic normality holds whenever the linearized error term dominates the global remainder term by a suitable margin. A sufficient condition is
\begin{align*}
\lambda_2^2(\bm L_{\G_n})\rr_{\G_n}(i_n\leftrightarrow j_n)=\omega((\log n)^4).
\end{align*}
Under this condition,
\begin{align*}
\frac{(\widehat{u}_{i_n}-\widehat{u}_{j_n})-(u_{i_n}^*-u_{j_n}^*)}
{\sqrt{\rr_{\G_n^*}(i_n\leftrightarrow j_n)}}
\Rightarrow \mathcal N(0,1),\qquad\text{as $n\to\infty$}, 
\end{align*}
where $\G_n^*$ is a weighted graph based on $\G_n$ whose edge weights depend on $\bm u^*$ and are chosen so that the corresponding effective resistance gives the appropriate variance normalization. 

Our approach combines the electrical-network interpretation of pairwise MLE errors in \cite{chen2023ranking} with a new perturbation strategy that avoids auxiliary compatibility conditions. \citet{chen2023ranking} applies a preconditioned gradient descent to the negative log-likelihood initialized at $\bm u^*$ and controls the accumulated error over the entire sequence of iterates. Closing the resulting induction requires a compatibility condition \cite[Condition~(8)]{chen2023ranking} involving certain abstract graph-dependent quantities, which restricts the applicability of the method. By contrast, we take only one preconditioned gradient step from $\bm u^*$ and directly analyze the perturbation around the resulting point. This strategy avoids the compatibility condition but requires pathwise control of the Hessian of the log-likelihood. To address this, we construct a surrogate likelihood and apply a localization argument that is similar in spirit to the primal-dual witness method in high-dimensional statistics \cite{wainwright2009sharp}. This forms the first key new ingredient of our analysis.

The resulting perturbation analysis requires controlling a nonlinear remainder through the random graph-theoretic quantity $Q_{ij}$ in \eqref{pang1}. Our second technical contribution is a uniform upper bound for this quantity in terms of the unnormalized spectral gap. To establish this bound, we connect $Q_{ij}$ to the $\ell_1$-deviation of the voltage profile generated by a unit current injected at $i$ and withdrawn at $j$, and control this deviation using a level-set argument. Conceptually, this argument may be viewed as a deterministic analogue of graph-based chaining methods for controlling MLE errors \cite{han2023general,han2023unified,dong2026statistical}; see \Cref{sec:l1} for further discussion of this connection.

Our analysis unifies the electrical-network and graph-chaining approaches and illustrates how local and global graph properties jointly determine the pairwise MLE error. Once the MLE error bound is established, asymptotic normality follows under the additional balancing condition \eqref{eq:an-pair-condition} by applying the Lindeberg central limit theorem to the linearized term.

\paragraph{Organization and notation.}
The remainder of the article is organized as follows. \Cref{sec:ps} formulates the problem of uniform consistency for the MLE and introduces the necessary notation. \Cref{sec:en} provides a self-contained overview of the electrical-network interpretation of the comparison graph, develops the main intuition underlying our analysis, and establishes a technical estimate used in subsequent sections. \Cref{sec:unif} presents our main asymptotic results for the MLE under general comparison graphs and applies them to several random graph designs. The technical proofs are deferred to \Cref{sec:proof}. \Cref{sec:num} conducts numerical simulations to verify our theoretical findings. \Cref{sec:con} concludes the article and discusses a few future directions. Throughout the article, we use Bachmann--Landau asymptotic notation. In particular, for two positive sequences $a_n$ and $b_n$, we write $a_n=o(b_n)$ if $a_n/b_n\to0$ as $n\to\infty$, and $a_n=\omega(b_n)$ if $b_n=o(a_n)$. 

\section{Problem setup}\label{sec:ps}

A pairwise comparison design involving $n$ objects can be represented by an undirected weighted graph $\G=([n], \M)$, where $[n]\coloneqq\{1, \ldots, n\}$ and $\M\coloneqq\{m_{ij}\}_{1\leq i<j\leq n}$, with $m_{ij}\geq 0$ denoting the number of comparisons between objects $i$ and $j$. We extend the weights symmetrically by setting $m_{ji}=m_{ij}$. Objects $i$ and $j$ are said to be compared if $m_{ij}>0$ and denoted by $i\sim j$. 

Let $\bm e_i\in\R^n$ denote the $i$th canonical basis vector, and define $\bm x_{ij}\coloneqq\bm e_i-\bm e_j$. Fixing an orientation $i\to j$ if $i<j$, we define the \textit{full} incidence matrix of $\G$ as 
\begin{align}
\bm Q\coloneqq(\bm x_{ij})_{1\leq i<j\leq n}\in\R^{n\times {n\choose 2}}.\label{Q}
\end{align}
The range of $\diag(\M)\bm Q^\top$ consists of the weighted gradient flows on $\G$, with the flow on $\{i, j\}$ set to zero whenever $i\nsim j$; see \Cref{sec:sger} for further interpretations.

In the BT model, the probability of object $i$ defeating object $j$, denoted by $i\succ j$, in a single comparison is given by
\begin{align*}
\mathbb{P}(i\succ j) = \sigma(\bm x_{ij}^\top\bm u) = \sigma(u_i-u_j),\qquad\sigma(u)\coloneqq(1+e^{-u})^{-1}.
\end{align*}
This probability depends on $\bm u$ only through the utility difference $u_i-u_j$. For identifiability, we impose the centering constraint
$\bm1^\top\bm u=0$. The true utility vector is denoted by $\bm u^*=(u_1^*,\ldots,u_n^*)^\top\in\R^n$, where
$\bm1^\top\bm u^*=0$, and its dynamic range is defined as $\kappa\coloneqq\max_{1\leq i\leq n}u_i^*-\min_{1\leq i\leq n}u_i^*$.

Conditional on the comparison design $\G$, we assume that all individual comparison outcomes are independent. For every $i\neq j$, let $Y_{ij}$ denote the number of the $m_{ij}$ comparisons in which $i$ defeats $j$. Thus,
\begin{align*}
Y_{ij}\sim\operatorname{Binomial}\left(m_{ij},\sigma(\bm x_{ij}^\top\bm u^*)\right), \qquad 1\leq i<j\leq n,
\end{align*}
independently across edges. The conditional log-likelihood of
$\bm u$ is
\begin{equation}\label{mle}
\begin{aligned}
l(\bm u)&=\sum_{1\leq i<j\leq n}
\left[Y_{ij}\log\sigma(\bm x_{ij}^\top\bm u)+(m_{ij}-Y_{ij})\log\bigl(1-\sigma(\bm x_{ij}^\top\bm u)\bigr)
\right]\\
&=\sum_{1\leq i<j\leq n}\left[Y_{ij}\bm x_{ij}^\top\bm u-m_{ij}\psi(\bm x_{ij}^\top\bm u)\right],\qquad\psi(u)\coloneqq\log(1+e^u).
\end{aligned}
\end{equation}
The MLE of $\bm u^*$ is defined by
\begin{align*}
\uu\coloneqq\argmax_{\bm u:\,\bm1^\top\bm u=0}\; l(\bm u).
\end{align*}
\citet{MR0097876} showed that the MLE uniquely exists and is finite if and only if the directed graph induced by the comparison outcomes is strongly connected. This is equivalent to saying that any Markov chain whose transition graph coincides with this directed comparison graph is irreducible.

Uniform consistency of the MLE concerns conditions under which all components of $\bm u^*$ can be consistently estimated simultaneously by $\uu$. Because the BT model identifies utilities only up to a common additive constant, an intrinsic measure of estimation error is
\begin{align}
\Delta_n\coloneqq\max_{1\leq i<j\leq n}\left|(\widehat u_i-\widehat u_j)-(u_i^*-u_j^*)\right|.\label{maxerror}
\end{align}
Under the centering constraints $\bm1^\top\uu=\bm1^\top\bm u^*=0$, this quantity satisfies
\begin{align*}
\|\uu-\bm u^*\|_\infty\leq \Delta_n\leq 2\|\uu-\bm u^*\|_\infty.
\end{align*}
Thus, uniform consistency may equivalently be formulated as $\Delta_n=o_{\P}(1)$ or $\|\uu-\bm u^*\|_\infty=o_{\P}(1)$. Throughout this article, we use $\Delta_n$. 

Roughly speaking, $\Delta_n$ depends on how efficiently information propagates within the comparison graph. Recent work has related this error to several graph-theoretic properties, including the spectral gap \cite{pmlr-v180-li22g}, effective resistance \cite{chen2023ranking}, and unnormalized Cheeger constant and related graph diameters \cite{han2023general}. Although these quantities are closely related to each other under special settings, they can behave quite differently for general comparison graphs. We next review them from an electrical-network perspective and establish a key technical lemma for later use. 

\section{Electrical networks}\label{sec:en}

\subsection{Spectral gap and effective resistance}\label{sec:sger}

Consider a network $\G$ on $[n]$ with nonnegative weights $\mathcal C=\{c_{ij}\}_{1\leq i<j\leq n}$ ($c_{ji}=c_{ij}$ for $i<j$). Here we interpret $c_{ij}$ as the conductance (or the reciprocal resistance) between $i$ and $j$, and write $\bm C = \diag(\mathcal C)$. The corresponding incidence matrix $\bm Q$ is the same as defined in \eqref{Q}. In the following discussion, we assume $\G$ is connected. 

A vector $\bm v = (v_1, \ldots, v_n)^\top\in\R^n$ can be viewed as a voltage profile on $[n]$. For each $1\leq i\neq j\leq n$, Ohm's law states that the current flowing from $i$ to $j$ is 
\begin{align}
I_{i\to j}(\bm v)=c_{ij}(v_i-v_j), \qquad  \bm I(\bm v) = (I_{i\to j}(\bm v))_{1\leq i<j\leq n}\in\R^{{n\choose 2}}.\label{induced}
\end{align}
Hence, the net current flowing out of vertex $i$ through the edges of $\G$ (or the divergence of $\bm I(\bm v)$ at $i$) is $\sum_{j} I_{i\to j}(\bm v)$. In matrix form, the vector of net outgoing currents at each vertex is $\bm L_{\G}\bm v$, where $\bm L_{\G}\coloneqq\bm Q\bm C\bm Q^\top\in\R^{n\times n}$ is the unnormalized graph Laplacian of $\G$. When $\G$ is connected, $\ker(\bm L_{\G}) = \mathrm{span}{\{\bm 1\}}$. This reflects the fact that adding a constant to all voltages does not change the induced current flow. For identifiability, we assume $\bm 1^\top\bm v=0$. The second-smallest eigenvalue of $\bm L_{\G}$ is denoted by $\lambda_2(\bm L_{\G})$; it measures the algebraic connectivity of the graph and is referred to as the \textit{spectral gap} of $\bm L_{\G}$.

For any $i\neq j\in[n]$, inject one unit of current into the network at $i$ and extract one unit at $j$ while leaving the remaining vertices unaffected. The corresponding vector of external current injections is $\bm x_{ij}$. By Kirchhoff's current law, the net current flowing out of each vertex through the network is equal to the externally injected current. Therefore, the resulting voltage profile satisfies 
\begin{align}
\bm L_{\G}\bm v=\bm x_{ij}. \label{itoj0}
\end{align}
Under the normalization $\bm 1^\top\bm v=0$, this equation has the unique solution 
\begin{align}
\bm v^{i\to j}=\bm L_{\G}^{\dagger}\bm x_{ij}. \label{itoj}
\end{align}
The voltage difference between $i$ and $j$ is 
\begin{align}
\rr_{\G}(i\leftrightarrow j)\coloneqq v^{i\to j}_i-v^{i\to j}_j=\bm x_{ij}^\top\bm v^{i\to j}=\bm x_{ij}^\top\bm L_{\G}^{\dagger}\bm x_{ij},
\end{align}
which is a quadratic form of $\bm L_{\G}^{\dagger}$ symmetric in $i$ and $j$. Since a unit current is injected at $i$ and extracted at $j$, this voltage difference is precisely the \textit{effective resistance} between $i$ and $j$, i.e., $\rr_{\G}(i\leftrightarrow j)$ is the resistance of a single resistor that is electrically equivalent to the entire network when viewed from terminals $i$ and $j$. The \textit{maximum principle} states that
\begin{align}
v^{i\to j}_k-v^{i\to j}_\ell\leq v^{i\to j}_i-v^{i\to j}_j = \rr_{\G}(i\leftrightarrow j), \qquad \forall k, \ell\in [n], \label{mp}
\end{align}
since $\bm v^{i\to j}$ is harmonic at every vertex other than the source $i$ and the sink $j$.
The maximum effective resistance of the network $(\G,\mathcal C)$ is denoted by
\begin{align}
\rr_{\G}\coloneqq \max_{1\leq i<j\leq n} \rr_{\G}(i\leftrightarrow j).
\end{align}

Both $1/\lambda_2(\bm L_{\G})$ and $\rr_{\G}$ are defined via quadratic forms of $\bm L_{\G}^\dagger$ (Green's function). This naturally gives the following monotonicity principle, which is well known, and we provide the proof for completeness. 

\begin{lemma}[Rayleigh's monotonicity principle]\label{lm:rp}
Consider two networks $\G$ and $\G'$ on $[n]$ with nonnegative conductances $\mathcal C=\{c_{ij}\}_{1\leq i<j\leq n}$ and $\mathcal C'=\{c'_{ij}\}_{1\leq i<j\leq n}$, respectively. Suppose that $c_{ij}\geq \alpha c'_{ij}$ for some $\alpha > 0$ and all $1\leq i<j\leq n$ and that $\G'$ is connected. Then
\begin{align*} \frac{1}{\lambda_2(\bm L_{\G})}\leq\frac{1}{\alpha\lambda_2(\bm L_{\G'})},\qquad \rr_{\G}(i\leftrightarrow j)\leq \frac{1}{\alpha}\rr_{\G'}(i\leftrightarrow j), \qquad \rr_{\G}\leq \frac{1}{\alpha}\rr_{\G'}. 
\end{align*}
\end{lemma}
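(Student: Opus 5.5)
The plan is to reduce all three inequalities to a single Loewner-order comparison of the two Laplacians. Write $\bm L_{\G}=\bm Q\bm C\bm Q^\top$ and $\bm L_{\G'}=\bm Q\bm C'\bm Q^\top$, which share the incidence matrix $\bm Q$. Since $c_{ij}-\alpha c'_{ij}\geq 0$ for every pair, we get
\begin{align*}
\bm L_{\G}-\alpha\bm L_{\G'}=\bm Q(\bm C-\alpha\bm C')\bm Q^\top=\sum_{i<j}(c_{ij}-\alpha c'_{ij})\bm x_{ij}\bm x_{ij}^\top\succeq 0 .
\end{align*}
Thus $\bm L_{\G}\succeq\alpha\bm L_{\G'}$. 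Both matrices annihilate $\bm 1$. Because $\G'$ is connected, $\ker(\bm L_{\G'})=\mathrm{span}\{\bm 1\}$, and the ordering then forces $\ker(\bm L_{\G})=\mathrm{span}\{\bm 1\}$ as well, so $\G$ is connected too. Both Laplacians are therefore positive definite on the common subspace $\bm 1^\perp$.

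For the spectral gap, use the Courant--Fischer characterization $\lambda_2(\bm L)=\min_{\bm v\perp\bm 1,\ \|\bm v\|=1}\bm v^\top\bm L\bm v$. Together with $\bm L_{\G}\succeq\alpha\bm L_{\G'}$ this gives $\lambda_2(\bm L_{\G})\geq\alpha\lambda_2(\bm L_{\G'})>0$. Taking reciprocals yields the first inequality.

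For the resistances, restrict both operators to $\bm 1^\perp$, where they are invertible and $\bm L^\dagger$ acts as the inverse. Operator monotonicity of inversion on positive definite matrices, $A\succeq B\succ0\Rightarrow B^{-1}\succeq A^{-1}$, then gives $\bm L_{\G}^\dagger\preceq\alpha^{-1}\bm L_{\G'}^\dagger$ on $\bm 1^\perp$. Since $\bm x_{ij}\in\bm 1^\perp$, evaluating the quadratic form at $\bm x_{ij}$ gives $\rr_{\G}(i\leftrightarrow j)\leq\alpha^{-1}\rr_{\G'}(i\leftrightarrow j)$. Maximizing over pairs gives the bound on $\rr_{\G}$.

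The only delicate point is the pseudo-inverse step. One can either cite operator monotonicity after the restriction to $\bm 1^\perp$, or argue directly via the variational (Dirichlet) principle
\begin{align*}
\rr_{\G}(i\leftrightarrow j)^{-1}=\min\{\bm v^\top\bm L_{\G}\bm v: v_i-v_j=1\}.
\end{align*}
Under the direct route, $\bm L_{\G}\succeq\alpha\bm L_{\G'}$ immediately gives $\rr_{\G}^{-1}\geq\alpha\,\rr_{\G'}^{-1}$. This route avoids pseudo-inverse subtleties, so I would use it. It requires a short verification that the minimizer is $\bm v^{i\to j}/\rr_{\G}(i\leftrightarrow j)$, which follows from \eqref{itoj0} by Lagrange multipliers.
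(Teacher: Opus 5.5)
Your proposal is correct and follows the paper's argument: the Loewner comparison $\bm L_{\G}\succeq\alpha\bm L_{\G'}$, the identification of both kernels with $\mathrm{span}\{\bm 1\}$ from the connectedness of $\G'$, and operator monotonicity of inversion on $\bm 1^\perp$, which gives $\bm L_{\G}^\dagger\preceq\alpha^{-1}\bm L_{\G'}^\dagger$ and from which the spectral-gap and resistance bounds follow. The Dirichlet-principle route you prefer for the resistance step is also valid and slightly more elementary, since the objective is convex and so the Lagrange stationary point is the minimizer; your Courant--Fischer step for $\lambda_2$ is the same comparison applied to $\bm L$ directly rather than to $\bm L^\dagger$.
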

\begin{proof}
Note for every $\bm z=(z_1, \ldots, z_n)^\top\in\R^n$, $\bm z^\top\bm L_\G\bm z=\sum_{1\leq i<j\leq n}c_{ij}(z_i-z_j)^2$. As a result, $\bm L_{\G}\succeq\alpha\bm L_{\G'}\succeq 0$. Since $\G'$ is connected, $\ker(\bm L_{\G})=\ker(\alpha\bm L_{\G'}) = \mathrm{span}\{\bm 1\}$. Restricting to $\mathrm{span}\{\bm 1\}^\perp$, since $x\mapsto 1/x$ is operator decreasing on $\R_+$, $\bm L^\dagger_{\G}\preceq (\alpha\bm L_{\G'})^\dagger = \bm L^\dagger_{\G'}/\alpha$. The proof is finished by using the definitions of $\lambda_2(\bm L_\G)$ and $\rr_\G(i\leftrightarrow j)$. 
\end{proof}
Despite their shared monotonicity with respect to the edge weights, the reciprocal spectral gap and effective resistance can exhibit rather different behavior. Indeed, by the variational characterization of $\lambda_2(\bm L_{\G})$ and a standard spectral estimate, 
\begin{align}
\frac{1}{n\lambda_2(\bm L_{\G})}\leq \rr_{\G}\leq \frac{2}{\lambda_2(\bm L_{\G})}, \label{transfer}
\end{align}
and both scalings are tight for certain graphs up to constant factors; see, e.g., \cite[Theorem 4.2]{chandra1989electrical}. Thus, while the spectral gap captures the global connectivity of the network, effective resistance can encode finer geometric information; see \Cref{prop:er}. 

Effective resistance admits several useful probabilistic interpretations. For instance, for an edge $\{i,j\}$, the quantity $c_{ij}\rr_{\G}(i\leftrightarrow j)$ is precisely its leverage score with respect to the weighted incidence matrix $\bm C^{1/2}\bm Q^\top$. We also introduce the vertex leverage score by aggregating the leverage scores of the edges incident to a vertex:
\begin{align}
\ell_{\G}(i)\coloneqq\sum_{j\neq i} c_{ij}\rr_{\G}(i\leftrightarrow j).\label{lev}
\end{align}
This quantity will arise naturally in our analysis and is closely related to the resistance curvature introduced by \citet{devriendt2022discrete}, which is defined by $p_i=1-\ell_{\G}(i)/2$. Let $\bm\ell_{\G}\coloneqq(\ell_{\G}(1),\ldots,\ell_{\G}(n))^\top$. For a connected graph $\G$, one has
\begin{align}
\bm\ell_{\G}=2\left(1-\frac{1}{n}\right)\bm 1-\bm L_{\G}\diag\mathrm{vec}(\bm L_{\G}^{\dagger}),\label{lev1}
\end{align}
where $\diag\mathrm{vec}(\bm L_{\G}^{\dagger})$ denotes the vector formed by the diagonal entries of $\bm L_{\G}^{\dagger}$; see, e.g., \cite[Equation~(19)]{devriendt2022discrete}. 
For additional probabilistic interpretations of effective resistance and related graph quantities, see \cite{lyons2016probability}.

\subsection{$\ell_1$-deviation of voltage profiles}\label{sec:l1}
Both the reciprocal spectral gap and effective resistance are associated with the $\ell_2$-geometry weighted by the graph Laplacian pseudoinverse. In \Cref{sec:proof}, we also need an $\ell_1$-geometric quantity associated with a voltage profile, namely, its $\ell_1$-total deviation about a median:
\begin{align}
\sp(\bm v)\coloneqq\inf_{c\in\R}\|\bm v-c\bm 1\|_1=\|\bm v-\med(\bm v)\bm 1\|_1,\label{mysv}
\end{align}
where $\med(\bm v)$ denotes any median of the coordinates of $\bm v$. This quantity is closely related to the unnormalized
Cheeger constant of the underlying weighted graph, which is used in \cite{han2023general} and recalled below. 

\begin{definition}[Unnormalized Cheeger constant]\label{def:cheeger}
Let $\G$ be a connected weighted graph with nonnegative edge weights $\mathcal C=\{c_{ij}\}_{1\leq i<j\leq n}$. Its unnormalized Cheeger
constant is
\begin{align}
h_\G \coloneqq \min_{\emptyset\neq U\subset [n]}\frac{c(U, U^\complement)}{\min\{|U|, |U^\complement|\}}, \qquad c(U, U^\complement)\coloneqq \sum_{i\in U, j\in U^\complement}c_{ij}. \label{cheeger}
\end{align}
\end{definition}

Recall $\bm I(\bm v) = (I_{i\to j}(\bm v))_{1\leq i<j\leq n}$ defined in \eqref{induced}. The unnormalized Cheeger constant admits the following variational characterization~\cite[Theorem~1]{chung1998isoperimetric}:
\begin{align}
h_{\G}=\inf_{\bm v\in\R^n: \bm v\notin\text{span}\{\bm 1\}}\frac{\|\bm I(\bm v)\|_1}{\sp(\bm v)}.\label{cv98}
\end{align}
Consequently, for any $\bm v\in\R^n$, 
\begin{align}
\sp(\bm v)\leq\frac{\|\bm I(\bm v)\|_1}{h_\G}.\label{crude}
\end{align}
The proof of \eqref{cv98} relies on a level-set decomposition which is relevant to the proof of \Cref{lem:voltage-spread}. For completeness, we sketch the proof here.

\begin{proof}[Proof of \Cref{cv98}]
Fix $\bm v\notin\text{span}\{\bm 1\}$ and take $\bar{v}=\med(\bm v)$. For $t>0$, define the following upper and lower level sets $U_t^+=\{i: v_i>\bar{v}+t\}$ and $U_t^-=\{i: v_i<\bar{v}-t\}$. Using the integral representations of $\|\bm I(\bm v)\|_1$ and $\sp(\bm v)$ in terms of $U^{\pm}_t$, we have 
\begin{align*}
\|\bm I(\bm v)\|_1&=\int_0^\infty c(U_t^+, (U_t^+)^\complement) \d t + \int_0^\infty c(U_t^-, (U_t^-)^\complement) \d t,\\
\sp(\bm v)&=\int_0^\infty |U_t^+|\d t+\int_0^\infty |U_t^-|\d t\\
 &= \int_0^\infty \min\{|U_t^{+}|, |(U_t^{+})^\complement|\}\d t+\int_0^\infty \min\{|U_t^{-}|, |(U_t^{-})^\complement|\}\d t,
\end{align*}
where the last step follows from the definition of the median. Taking the ratio between $\|\bm I(\bm v)\|_1$ and $\sp(\bm v)$ and invoking the definition of $h_\G$ in \eqref{cheeger} yields $h_{\G}\leq\|\bm I(\bm v)\|_1/\sp(\bm v)$. Taking the infimum over $\bm v$ yields the lower bound in \eqref{cv98}. For the upper bound, take $\bm v$ as an indicator function on the partition $U$ that attains the equality in the minimum in \eqref{cheeger}. 
\end{proof}

This level-set decomposition is closely related to the graph-based chaining ideas used in the study of the BT model \cite{han2020asymptotic, han2023general}. These arguments track nested level sets of the MLE error vector $\uu-\bm u^*$, combining concentration bounds for aggregate utility fluctuations with graph expansion to show that the upper and lower error level sets grow until they intersect. The deterministic part of this chaining procedure is analogous to the level-set decomposition in the proof of \eqref{cv98}.

A quantity that arises in our analysis for the entrywise MLE error between objects $i$ and $j$ is $\sp(\bm v^{i\to j})$, where $\bm v^{i\to j}$ is defined in \eqref{itoj}. A direct application of \eqref{crude} (e.g., combined with a Cauchy--Schwarz bound on $\|\bm I(\bm v^{i\to j})\|_1$ using effective resistance) may be too crude because $\bm v^{i\to j}$ need not resemble a minimizer in the variational characterization \eqref{cv98}. We instead combine a truncation argument with the level-set decomposition underlying \eqref{cv98} to obtain a sharper bound.

\begin{lemma}[$\ell_1$-deviation of $\bm v^{i\to j}$]
\label{lem:voltage-spread}
Let $\G$ be a weighted graph on $[n]$ with weights $\mathcal C=\{c_{ij}\}_{1\leq i<j\leq n}$ and assume $\G$ is connected. 
Let $\bm v^{i\to j}$ be the voltage profile defined in \eqref{itoj}. Then
\begin{align}
\sp(\bm v^{i\to j})\leq\frac{4(1+\log n)}{\lambda_2(\bm L_{\G})}. 
\label{eq:voltage-spread}
\end{align}
\end{lemma}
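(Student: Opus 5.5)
The plan is to bound $\sp(\bm v)$, with $\bm v=\bm v^{i\to j}$, through the level-set representation used in the proof of \eqref{cv98}. Write $\bar v=\med(\bm v)$, $U_t^+=\{k: v_k>\bar v+t\}$ and $U_t^-=\{k:v_k<\bar v-t\}$. Then
\[
\sp(\bm v)=\int_0^\infty |U_t^+|\,\d t+\int_0^\infty|U_t^-|\,\d t .
\]
I will show that each integral is at most $2(1+\log n)/\lambda_2(\bm L_\G)$. The key input is a pointwise bound $|U_s^+|\le 2/(\lambda_2 s)$, obtained from a truncation argument. The lower sets are handled symmetrically by replacing $\bm v$ with $-\bm v=\bm v^{j\to i}$.

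\emph{Step 1 (truncation inequality).} Fix $s>0$ and set $\phi(x)=\min\{(x-\bar v)_+,s\}$. Define $\bm w=\phi(\bm v)$ coordinatewise. Since $\phi$ is nondecreasing and $1$-Lipschitz, every edge satisfies $(\phi(a)-\phi(b))^2\le(\phi(a)-\phi(b))(a-b)$. Summing with weights $c_{k\ell}$ gives
\[
\bm w^\top\bm L_\G\bm w\le \bm w^\top\bm L_\G\bm v=\bm w^\top\bm x_{ij}=w_i-w_j\le s,
\]
where the middle equality uses \eqref{itoj0} and the last step uses $0\le\bm w\le s$.

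\emph{Step 2 (spectral lower bound).} The vector $\bm w$ is supported on $U_0^+$, which has at most $n/2$ elements by the definition of the median. For such a vector,
\[
\|\bm w-\bar w\bm 1\|_2^2=\|\bm w\|_2^2-(\bm 1^\top\bm w)^2/n\ge\|\bm w\|_2^2/2 ,
\]
by Cauchy--Schwarz on the support. Hence
\[
\bm w^\top\bm L_\G\bm w\ge \lambda_2\|\bm w\|_2^2/2\ge \lambda_2 s^2|U_s^+|/2,
\]
because $w_k=s$ on $U_s^+$. Combining with Step 1 yields $|U_s^+|\le 2/(\lambda_2 s)$ for all $s>0$.

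\emph{Step 3 (integration).} An upper cutoff is needed, since $\int^\infty ds/s$ diverges. By the maximum principle \eqref{mp} and \eqref{transfer}, every $v_k-\bar v$ is at most $v_i-v_j=\rr_\G(i\leftrightarrow j)\le 2/\lambda_2$. Therefore $U_s^+=\emptyset$ for $s\ge R\coloneqq 2/\lambda_2$. Using $|U_s^+|\le \min\{n/2,\,2/(\lambda_2 s)\}$ and splitting at $s_0=4/(n\lambda_2)$ (note $s_0\le R$ once $n\ge2$), we get
\[
\int_0^\infty|U_s^+|\,\d s\le \frac n2 s_0+\frac{2}{\lambda_2}\log\frac{R}{s_0}=\frac{2}{\lambda_2}\Bigl(1+\log\frac n2\Bigr)\le\frac{2(1+\log n)}{\lambda_2}.
\]
The same bound for $U_s^-$ then gives \eqref{eq:voltage-spread}.

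The main obstacle is Step 1, specifically the choice of test vector. Plain truncation $(\bm v-\bar v)_+$ only controls $\ell_2$ mass, and that leads to a $\sqrt n$-type loss. Clipping from above at level $s$ is what turns the energy bound into the sharp $1/s$ decay of level-set sizes. The logarithm then comes only from integrating $1/s$ between the scales $1/(n\lambda_2)$ and $1/\lambda_2$.
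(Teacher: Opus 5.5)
Your proof is correct and is essentially the paper's argument: it uses the same clipped test vector $\min\{(\bm v-\bar v\bm 1)_+,t\bm 1\}$, the same median-based centering bound and the same energy inequality, but evaluates them at $t=t_r$ (the $r$th largest entry of $(\bm v-\bar v\bm 1)_+$) to get $t_r\le 2/(\lambda_2 r)$ and then sums the harmonic series over $r\le\lfloor n/2\rfloor$, which is the discrete counterpart of your layer-cake integral of $|U_s^+|\le 2/(\lambda_2 s)$, with the identical constant $2(1+\log(n/2))/\lambda_2$ per side. The maximum principle is not actually needed for your cutoff: since $|U_s^+|$ is an integer and $2/(\lambda_2 s)<1$ for $s>2/\lambda_2$, Step 2 already forces $U_s^+=\emptyset$ there.
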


\begin{proof}
Write $\bm v=\bm v^{i\to j}$ and $\bar{v} = \med(\bm v)$. For $t>0$, define the truncated positive and negative parts
\begin{align*}
\bm f^t\coloneqq\min\{(\bm v-\bar{v}\bm 1)_+, t\bm 1\},\qquad \bm g^t\coloneqq\min\{(\bar{v}\bm 1-\bm v)_+, t\bm 1\},
\end{align*}
where the minimum is taken componentwise. 

We first analyze $\bm f^t$ to bound $\|(\bm v-\bar{v}\bm 1)_+\|_1$. Let $\bm f^{t, c} = \bm f^t - \bm 1\bm 1^\top\bm f^t/n$ denote the centered $\bm f^t$.  By the definition of the median, $|\supp(\bm f^t)|\leq n/2$. By the bias-variance decomposition and applying the Cauchy--Schwarz inequality restricted to the components in $\supp(\bm f^t)$, 
\begin{align}
\|\bm f^{t, c}\|_2^2 = \|\bm f^t\|_2^2 - \frac{1}{n}(\bm 1^\top\bm f^t)^2\geq \|\bm f^t\|_2^2-\frac{1}{2}\|\bm f^{t}\|_2^2=\frac{1}{2}\|\bm f^{t}\|_2^2.\label{center} 
\end{align}

For any $r\leq n/2$, let $t_r$ denote the $r$th largest component of $(\bm v-\bar{v}\bm 1)_+$. We now derive an upper bound on $t_r$ for each positive $t_r$. By definition, $\bm f^{t_r}$ has at least $r$ components equal to $t_r$. Using \eqref{center} and the definition of $\lambda_2(\bm L_\G)$,  
\begin{align}
(\bm f^{t_r,c})^\top\bm L_{\G}\bm f^{t_r,c}\geq \lambda_2(\bm L_\G)\|\bm f^{t_r, c}\|_2^2\geq \frac{\lambda_2(\bm L_\G)}{2}\|\bm f^{t_r}\|_2^2\geq \frac{\lambda_2(\bm L_\G)}{2}rt_r^2. \label{low}
\end{align} 
Meanwhile, for any $k, \ell\in [n]$, $c_{k\ell}(f^{t_r}_k-f^{t_r}_\ell)^2\leq c_{k\ell}(v_k-v_\ell)(f^{t_r}_k-f^{t_r}_\ell)$. Summing this over $1\leq k<\ell\leq n$ gives
\begin{align}
(\bm f^{t_r,c})^\top\bm L_{\G}\bm f^{t_r,c}=(\bm f^{t_r})^\top\bm L_{\G}\bm f^{t_r}\leq\bm v^\top\bm L_{\G}\bm f^{t_r} =
(\bm L_{\G}\bm v)^\top\bm f^{t_r}\stackrel{\eqref{itoj}}{=}\bm x_{ij}^\top\bm f^{t_r}=f^{t_r}_i-f^{t_r}_j\leq t_r.\label{upp}
\end{align}
Combining \eqref{upp} and \eqref{low} yields
\begin{align*}
t_r\leq\frac{2}{\lambda_2(\bm L_\G)r}.
\end{align*}
Summing over $r$ gives 
\begin{align*}
\|(\bm v-\bar{v}\bm 1)_+\|_1 = \sum_{r=1}^{\lfloor n/2\rfloor} t_r\leq \frac{2}{\lambda_2(\bm L_\G)}\sum_{r=1}^{\lfloor n/2\rfloor}\frac{1}{r}\leq\frac{2(\log (n/2) + 1)}{\lambda_2(\bm L_\G)}.
\end{align*}
Applying the same argument to $\bm g^t$ yields
\begin{align*}
\|(\bar{v}\bm 1-\bm v)_+\|_1\leq\frac{2(\log (n/2) + 1)}{\lambda_2(\bm L_\G)}. 
\end{align*}
The proof is completed by noting $\sp(\bm v^{i\to j})=\|(\bm v-\bar{v}\bm 1)_+\|_1+\|(\bar{v}\bm 1-\bm v)_+\|_1$. 
\end{proof}

\section{Asymptotic theory}\label{sec:unif}

\subsection{Main results}\label{sec:unif1}

We first state our uniform consistency result along a sequence of deterministic connected comparison graphs $\G_n=([n],\M_n)$, where $\M_n=\{m_{ij}\}_{1\leq i<j\leq n}$ records the numbers of comparisons. Let $h_{\G_n}$ denote the unnormalized Cheeger constant of $\G_n$ (see \Cref{def:cheeger}) and $\bm L_{\G_n}$ its unnormalized graph Laplacian. We write $\lambda_2(\bm L_{\G_n})$ for the corresponding spectral gap, $\rr_{\G_n}(i\leftrightarrow j)$ for the effective resistance between $i$ and $j$, and $\rr_{\G_n}$ for the maximum effective resistance over all pairs of vertices. 

We also consider a weighted version of $\G_n$, denoted by $\G_n^*=([n],\M_n^*)$, with $\M_n^*=\{m_{ij}\sigma'(\bm x_{ij}^\top\bm u^*)\}_{1\leq i<j\leq n}$, where $\sigma(x)$ is the logistic function. The corresponding graph quantities are defined analogously for $\G_n^*$.

The dynamic range of $\bm u^*$ on $\G_n$ is denoted by $\kappa_n$. For convenience, we impose the following uniform boundedness assumption on $\kappa_n$:

\begin{assumption}\label{ass:bdd}
The dynamic range of the objects under comparison is uniformly bounded along the graph sequence:
$\kappa\coloneqq\sup_n \kappa_n<\infty$.
\end{assumption}

Assumption~\ref{ass:bdd} can be relaxed to allow $\kappa_n$ to grow slowly with $n$, but we do not pursue this extension to keep the statements simple. Under Assumption~\ref{ass:bdd}, the edge weights of $\G_n$ and $\G_n^*$ are equivalent up to a constant depending only on $\kappa$: for all $i\neq j$ with $m_{ij}>0$, 
\begin{align}
\frac{1}{4e^{\kappa}}\leq\frac{m_{ij}\sigma'(\bm x_{ij}^\top\bm u^*)}{m_{ij}}\leq \frac{1}{4}.\label{yh}
\end{align}
In particular, their effective resistances and spectral gaps are equivalent by \Cref{lm:rp}. For convenience, we formulate our result in terms of $\G_n^*$, which also appears in \Cref{thm:norm}. When verifying these conditions later, we use $\G_n$ instead, as illustrated in \Cref{sec:exam}.

\begin{theorem}[Uniform consistency]\label{thm:main}
Suppose 
\begin{align}
\lambda_2(\bm L_{\G^*_n})=\omega((\log n)^2).\label{unib}
\end{align}
Under Assumption~\ref{ass:bdd}, for all large $n$, with probability at least $1-n^{-2}$, the MLE uniquely exists, and simultaneously for all distinct $i, j\in [n]$,  
\begin{align}\label{main:bdd}
|(\widehat{u}_i-\widehat{u}_j)-(u_i^*-u_j^*)|\lesssim \sqrt{e^{\kappa}\rr_{\G^*_n}(i\leftrightarrow j)\log n} + \frac{e^{\kappa} (\log n)^2}{\lambda_2(\bm L_{\G^*_n})},\qquad i, j\in [n].
\end{align}
Here and throughout, the implicit constant is absolute and independent of both $n$ and $\kappa$. Consequently, by \eqref{transfer}, the uniform error $\Delta_n$ in \eqref{maxerror} satisfies
\begin{equation}\label{main:glob}
\begin{aligned}
\Delta_n \lesssim \sqrt{e^{\kappa}\rr_{\G^*_n}\log n} + \frac{e^{\kappa} (\log n)^2}{\lambda_2(\bm L_{\G^*_n})}\leq\sqrt{\frac{2e^{\kappa}\log n}{\lambda_2(\bm L_{\G^*_n})}} + \frac{e^{\kappa} (\log n)^2}{\lambda_2(\bm L_{\G^*_n})}=o(1). 
\end{aligned}
\end{equation}
\end{theorem}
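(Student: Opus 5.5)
We will follow the strategy announced in the introduction: take a single preconditioned Newton-type step from $\bm u^*$ and analyze the MLE as a perturbation around the resulting point. Let $\bm H^*\coloneqq-\nabla^2 l(\bm u^*)=\bm L_{\G_n^*}$ and $\bm g\coloneqq\nabla l(\bm u^*)=\sum_{i<j}(Y_{ij}-m_{ij}\sigma(\bm x_{ij}^\top\bm u^*))\bm x_{ij}$. Define the one-step point $\uuu\coloneqq\bm u^*+\bm L_{\G_n^*}^\dagger\bm g$. For each pair, the linear term satisfies $\bm x_{ij}^\top(\uuu-\bm u^*)=(\bm v^{i\to j})^\top\bm g$, where $\bm v^{i\to j}=\bm L_{\G_n^*}^\dagger\bm x_{ij}$. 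This is a sum of independent centered bounded terms $\sum_{k<\ell}(Y_{k\ell}-\E Y_{k\ell})(v_k-v_\ell)$. Its variance is $\sum m_{k\ell}\sigma'(\cdot)(v_k-v_\ell)^2=(\bm v^{i\to j})^\top\bm L_{\G_n^*}\bm v^{i\to j}=\rr_{\G_n^*}(i\leftrightarrow j)$, and each summand is bounded by $|v_k-v_\ell|\le\rr_{\G_n^*}(i\leftrightarrow j)$ by \eqref{mp}. Bernstein's inequality combined with a union bound over $\binom n2$ pairs then gives, with probability $1-n^{-3}$, the bound $\sqrt{\rr\log n}+\rr\log n$. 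By \eqref{transfer} and \eqref{unib}, $\rr\log n\lesssim\log n/\lambda_2=o(\sqrt{\rr\log n})$ is absorbed, which yields the first term of \eqref{main:bdd}. The same argument, applied to $\|\uuu-\bm u^*\|_\infty$, gives $\|\uuu-\bm u^*\|_\infty\lesssim\sqrt{\log n/\lambda_2}=o(1)$.

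Next we handle the nonlinear remainder. Write $\bm\delta\coloneqq\uu-\uuu$. The first-order condition $\nabla l(\uu)=0$, together with the integral form of Taylor's theorem, gives $\bm H(\uu)\,(\uu-\bm u^*)=\bm g$, where $\bm H(\bm u)=\int_0^1 -\nabla^2 l(\bm u^*+s(\bm u-\bm u^*))\,\d s$ is a Laplacian with weights $m_{k\ell}\bar\sigma'_{k\ell}$. Consequently
\[
\bm x_{ij}^\top\bm\delta=(\bm v^{i\to j})^\top(\bm H^*-\bm H(\uu))(\uu-\bm u^*)=\sum_{k<\ell}m_{k\ell}(\sigma'^*_{k\ell}-\bar\sigma'_{k\ell})(v_k-v_\ell)\,\bm x_{k\ell}^\top(\uu-\bm u^*).
\]
Using $|\sigma''|\le\sigma'$ together with a uniform bound $\Delta_n\le 1$, we get $|\sigma'^*-\bar\sigma'|\lesssim\sigma'^*|\bm x_{k\ell}^\top(\uu-\bm u^*)|$. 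Hence $|\bm x_{ij}^\top\bm\delta|\lesssim\Delta_n\sum m_{k\ell}\sigma'^*_{k\ell}|v_k-v_\ell|\,|\bm x_{k\ell}^\top(\uu-\bm u^*)|$. Bounding this crudely by $\Delta_n^2\|\bm I(\bm v)\|_1$ loses too much. Instead we keep the random structure: we split $\uu-\bm u^*=(\uuu-\bm u^*)+\bm\delta$ and treat the dominant contribution through the quantity $Q_{ij}$, which aggregates $\sum_{k<\ell}m_{k\ell}\sigma'^*|v_k-v_\ell|(\cdot)$ against the noise. Conditioned on the edge weights, we expect to reduce $Q_{ij}$, after summation by parts, to an $\ell_1$-type quantity $\sp(\bm v^{i\to j})$ times a sup-norm noise level of order $\log n$ per vertex (each vertex's aggregate fluctuation, normalized appropriately). \Cref{lem:voltage-spread} then gives $\sp(\bm v^{i\to j})\lesssim\log n/\lambda_2$, which produces the second term $e^\kappa(\log n)^2/\lambda_2$. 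The factor $e^\kappa$ enters through \eqref{yh}, when we compare curvature along the segment to $\sigma'^*$.

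The main obstacle is that the Taylor weights $\bar\sigma'$ depend on $\uu$ itself, so the Hessian must be controlled pathwise and cannot be fixed in advance. To break this circularity, we plan a localization argument in the spirit of primal-dual witness. We define a surrogate likelihood $\tilde l$ that agrees with $l$ on the box $\{\|\bm u-\bm u^*\|_\infty\le 1\}$ and is modified outside it, for instance by clipping utility differences so that $\tilde l$ is strongly concave with curvature comparable to $\bm L_{\G_n^*}$ up to $e^{\kappa}$. We then carry out the analysis above for the surrogate maximizer, for which the a priori bound $\Delta_n\le1$ holds automatically. This yields \eqref{main:bdd}, and under \eqref{unib} the resulting error is $o(1)$. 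It follows that the surrogate maximizer lies in the interior of the box, so it is a stationary point of $l$. By concavity it is the unique MLE, and in particular the MLE exists. Finally, a union bound over all pairs and over the probability-$n^{-3}$ events gives total failure probability at most $n^{-2}$. The bound \eqref{main:glob} then follows by maximizing over pairs and applying \eqref{transfer}.
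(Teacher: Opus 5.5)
You have a genuine gap in the nonlinear remainder. The linear step is fine; Bernstein with the maximum principle even removes the $e^{\kappa}$ from the leading term. Your identity $\bm x_{ij}^\top\bm\delta=(\bm v^{i\to j})^\top(\bm H^*-\bm H(\uu))(\uu-\bm u^*)$ is also correct. However, it puts the deterministic voltage of $\G_n^*$ in front and leaves a remainder that is quadratic in the \emph{unknown} error $\uu-\bm u^*$.

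Write $c^*_{k\ell}\coloneqq m_{k\ell}\sigma'(\bm x_{k\ell}^\top\bm u^*)$ and split $\uu-\bm u^*=(\uuu-\bm u^*)+\bm\delta$. Only the term $\sum_{k<\ell}c^*_{k\ell}|v_k-v_\ell|\,(\bm x_{k\ell}^\top(\uuu-\bm u^*))^2$ is controlled by concentration. The cross terms and the term quadratic in $\bm\delta$ need edgewise bounds on $\bm x_{k\ell}^\top\bm\delta$, which is the conclusion itself. Closing that loop is exactly the self-consistency induction behind \cite[Condition~(8)]{chen2023ranking}, which this theorem is designed to avoid. The surrogate does not break the circle. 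A strongly concave function that agrees with $l$ on a box need not have its maximizer in that box, so ``$\Delta_n\le 1$ holds automatically'' is precisely the localization that must be proved. If you enforce it by constraining to the box, you lose the stationarity equation your identity relies on.

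The control of $Q_{ij}$ is only announced, and the mechanism you sketch fails. After $|v_k-v_\ell|\le|v_k-a|+|v_\ell-a|$, the factor attached to vertex $k$ is $\sum_{\ell}c^*_{k\ell}(\bm x_{k\ell}^\top(\uuu-\bm u^*))^2$. Its mean is the vertex leverage score $\ell_{\G_n^*}(k)$, which can equal $n-1$ while $\lambda_2$ is arbitrarily large: take the centre of a star with all conductances $L$, where $\lambda_2=L$. So there is no uniform per-vertex noise level of order $\log n$. You must bound $\sum_k|v_k-a|\,\ell_{\G}(k)$ directly. The paper does this through \eqref{lev1}: the sum equals $2(1-1/n)\|\bm v-a\bm 1\|_1-\diag\mathrm{vec}(\bm L_{\G}^\dagger)^\top\bm L_{\G}|\bm v-a\bm 1|$. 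The second term is at most $4/\lambda_2(\bm L_{\G})$ in absolute value because $|\bm v-a\bm 1|$ is subharmonic off $\{i,j\}$. Only then does \Cref{lem:voltage-spread} give $Q_{ij}\lesssim\log n/\lambda_2$.

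The circularity itself is removed by expanding around your one-step point $\uuu$ (the paper's $\vv$) instead of around $\bm u^*$. With $\uu$ the surrogate maximizer, $\uu-\uuu=\bm L_{\G_n'}^\dagger\bm Q\bm\Lambda\bm Q^\top(\uuu-\bm u^*)$, where $\bm\Lambda$ is the Hessian change along $\bm u^*\to\uuu$ and $\G_n'$ carries the integrated Hessian along $\uuu\to\uu$. The remainder is then quadratic in $\uuu-\bm u^*$, which concentration controls edge by edge. The unknown maximizer enters only through the conductances of $\G_n'$. These are globally comparable to those of $\G_n^*$ by \eqref{w-comp}, thanks to the quadratic extension of $\psi$. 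Since \Cref{lm:tech} is deterministic and holds for any weighted graph, it applies to $\G_n'$ pathwise, with no a priori localization needed.
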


The proof of the full theorem is given in \Cref{sec:main}. The bound in \eqref{main:bdd} consists of a pair-specific term governed by effective resistance and a global term governed by the spectral gap of $\G^*_n$. Unlike \cite{pmlr-v180-li22g}, our bound does not involve a degree-imbalance factor, such as the ratio of the maximum to the minimum degree, which can substantially restrict the allowable graph heterogeneity. The analyses in \cite{chen2023ranking,yang2024top} yield a similar abstract decomposition involving effective resistance but require additional self-consistency conditions, whose verification can be nontrivial in general. Interestingly, \Cref{lm:tech}, which is a key technical estimate in our proof, also provides a concrete way to verify this condition. For instance, under Assumption~\ref{ass:bdd}, for \cite[Condition~(8)]{chen2023ranking} to hold, it suffices to require
\begin{align}
\min_{1\leq i<j\leq n: m_{ij}>0}\frac{\lambda_2^2(\bm L_{\G^*_n})\rr_{\G^*_n}(i\leftrightarrow j)}{(\log n)^3}\geq C(\kappa)\label{chenc}
\end{align}
for some large constant $C(\kappa)>0$ depending only on $\kappa$. 

Our analysis also yields an alternative estimate that replaces the global remainder in \eqref{main:bdd} by $e^\kappa n\rr_{\G_n^*}(i\leftrightarrow j)$; see Remark~\ref{rem:classical}. For fixed $\kappa$, this bound asymptotically vanishes if $\rr_{\G_n^*}(i\leftrightarrow j)=o(n^{-1})$. This resistance condition requires each object in the pair to participate in $\omega(n)$ comparisons. The refinement captures the effect of many direct comparisons between $i$ and $j$ and reflects the classical setting of fixed $n$ and increasing comparison counts. However, the
comparison requirement can be restrictive when $n$ grows. Therefore, we retain the spectral-gap term in the main statement to keep the bound and its interpretation simple.

Although effective resistance depends on the entire comparison graph, it is closely related to local degree information when the graph is well-connected (e.g., $\lambda_2(\bm L_{\G_n})=\omega(1)$). To make this connection explicit, consider the setting in which every pair is compared at most once, i.e., $\G_n$ is a simple graph. 
\begin{proposition}\label{prop:er}
Suppose $m_{ij}\leq 1$ for all $1\leq i<j\leq n$. Then, for every distinct $i, j\in[n]$,
\begin{align}
\frac{1}{d_i}+\frac{1}{d_j}-\frac{2m_{ij}}{d_id_j}\leq\rr_{\G_n}(i\leftrightarrow j)\leq\left[\left(1+\frac{1}{\lambda_2(\bm L_{\G_n})}\right)\left(\frac{1}{d_i}+\frac{1}{d_j}\right)-\frac{2m_{ij}}{d_id_j}\right], \label{locr}
\end{align} 
where $d_i$ denotes the degree of $i$ in $\G_n$. 
\end{proposition}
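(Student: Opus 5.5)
Both inequalities in \eqref{locr} will follow from one exact identity for $\rr_{\G_n}(i\leftrightarrow j)$. That identity is the sum of a local term, which gives the lower bound, and a nonnegative remainder, which I bound by the spectral gap. Write $\bm L=\bm L_{\G_n}=\bm D-\bm A$, where $\bm D=\diag(d_1,\ldots,d_n)$ and $\bm A=(m_{k\ell})$ is the $0/1$ adjacency matrix (here $m_{ij}\le 1$ is used). Assume $\G_n$ is connected, so every $d_k\ge1$, $\bm D$ is invertible, and $\bm L^\dagger$ is the inverse of $\bm L$ on $\mathrm{span}\{\bm 1\}^\perp$.

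\textbf{Step 1 (decomposition).} Set $\bm w\coloneqq\bm D^{-1}\bm x_{ij}=\bm e_i/d_i-\bm e_j/d_j$. Then
\begin{align*}
\bm x_{ij}=\bm D\bm w=\bm L\bm w+\bm A\bm w .
\end{align*}
Both $\bm x_{ij}$ and $\bm L\bm w$ lie in $\mathrm{span}\{\bm 1\}^\perp$, so $\bm A\bm w$ does too. This is also direct: $\bm 1^\top\bm A\bm w=\bm d^\top\bm w=1-1=0$, where $\bm d=(d_1,\ldots,d_n)^\top$. Expanding the quadratic form, and using $\bm L^\dagger\bm L=\bm P$ (the orthogonal projector onto $\mathrm{span}\{\bm 1\}^\perp$) with $\bm P\bm A\bm w=\bm A\bm w$, gives
\begin{align*}
\rr_{\G_n}(i\leftrightarrow j)=\bm x_{ij}^\top\bm L^\dagger\bm x_{ij}
&=\bm w^\top\bm L\bm w+2\bm w^\top\bm A\bm w+(\bm A\bm w)^\top\bm L^\dagger(\bm A\bm w)\\
&=\bm w^\top\bm D\bm w+\bm w^\top\bm A\bm w+(\bm A\bm w)^\top\bm L^\dagger(\bm A\bm w).
\end{align*}

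\textbf{Step 2 (evaluate the local terms).} Direct computation gives $\bm w^\top\bm D\bm w=1/d_i+1/d_j$. Since $\bm A$ has zero diagonal, $\bm w^\top\bm A\bm w=-2m_{ij}/(d_id_j)$. Hence
\begin{align*}
\rr_{\G_n}(i\leftrightarrow j)=\frac1{d_i}+\frac1{d_j}-\frac{2m_{ij}}{d_id_j}+(\bm A\bm w)^\top\bm L^\dagger(\bm A\bm w).
\end{align*}
The last term is nonnegative because $\bm L^\dagger\succeq0$, and dropping it yields the lower bound in \eqref{locr}.

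\textbf{Step 3 (remainder via the spectral gap).} Since $\bm A\bm w\perp\bm 1$, the definition of $\lambda_2$ gives
\begin{align*}
(\bm A\bm w)^\top\bm L^\dagger(\bm A\bm w)\le\frac{\|\bm A\bm w\|_2^2}{\lambda_2(\bm L)}.
\end{align*}
Now $\bm A\bm w=\bm A_{\cdot i}/d_i-\bm A_{\cdot j}/d_j$, and the columns of $\bm A$ are $0/1$ vectors with $\|\bm A_{\cdot k}\|_2^2=d_k$. Therefore
\begin{align*}
\|\bm A\bm w\|_2^2=\frac1{d_i}+\frac1{d_j}-\frac{2|N(i)\cap N(j)|}{d_id_j}\le\frac1{d_i}+\frac1{d_j},
\end{align*}
where $N(k)$ denotes the neighborhood of $k$. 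Substituting this into the identity of Step 2 gives the upper bound in \eqref{locr}.

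The only point needing care is the orthogonality $\bm A\bm w\perp\bm 1$ in Step 1. It is what makes the cross term $\bm w^\top\bm P\bm A\bm w$ equal to $\bm w^\top\bm A\bm w$, and what allows the spectral-gap bound in Step 3. The rest is bookkeeping, and the simple-graph assumption is used only to make the columns of $\bm A$ binary with squared norms equal to the degrees.
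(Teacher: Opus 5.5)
Your proof is correct and follows essentially the same route as the paper. The paper also uses the trial profile $\bm w=\bm e_i/d_i-\bm e_j/d_j$ and writes $\bm x_{ij}=\bm D\bm w=\bm L\bm w+\bm A\bm w$. It derives the same exact identity with the nonnegative remainder $(\bm A\bm w)^\top\bm L^\dagger(\bm A\bm w)$, and bounds that remainder by $\|\bm A\bm w\|_2^2/\lambda_2\le(1/d_i+1/d_j)/\lambda_2$. Your explicit check that $\bm 1^\top\bm A\bm w=\bm d^\top\bm w=0$ makes rigorous the cross-term simplification the paper uses without comment.
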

Similar reciprocal-degree approximations were established by \citet{von2014hitting} using the normalized spectral gap and minimum degree.
The proof of \Cref{prop:er} uses an approximation argument based on the electrical-network representation and is deferred to \Cref{sec:prop}. Combining \eqref{locr} with \eqref{yh} and \Cref{lm:rp} yields analogous bounds for $\rr_{\G_n^*}(i\leftrightarrow j)$, up to constant factors depending only on $\kappa$.

The decomposition in \eqref{main:bdd} also leads to an asymptotic normality result. The first pair-specific term corresponds to the linearized fluctuation of the MLE and has variance governed by the effective resistance, whereas the second term is a global nonlinear remainder that serves as a bias term. When the pair-specific fluctuation dominates the global remainder by a suitable margin, the latter becomes asymptotically negligible after normalization by the square root of the corresponding effective resistance. The asymptotic distribution is then determined by the linearized term, to which a standard central limit theorem can be applied.

\begin{theorem}[Asymptotic normality]\label{thm:norm}
Suppose Assumption~\ref{ass:bdd} holds. Let $(i_n,j_n)$ be any prespecified sequence of distinct vertex pairs, with $i_n,j_n\in[n]$. If
\begin{align}
\lambda_2^2(\bm L_{\G_n^*})\rr_{\G_n^*}(i_n\leftrightarrow j_n)=\omega((\log n)^4),
\label{eq:an-pair-condition}
\end{align}
then the MLE exists uniquely with probability tending to one, and
\begin{align}
\frac{(\widehat u_{i_n}-\widehat u_{j_n})-(u_{i_n}^*-u_{j_n}^*)}
{\sqrt{\rr_{\G_n^*}(i_n\leftrightarrow j_n)}}
\Rightarrow \mathcal N(0,1),
\qquad \text{as } n\to\infty.
\label{eq:an-oracle-clt}
\end{align}
Moreover, the same conclusion holds if $\rr_{\G_n^*}(i_n\leftrightarrow j_n)$ in \eqref{eq:an-oracle-clt} is replaced by its plug-in estimator based on the MLE.
\end{theorem}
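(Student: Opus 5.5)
We plan to obtain \Cref{thm:norm} by splitting the MLE error into a linear term plus a remainder. The approximation is the one-step preconditioned gradient point from $\bm u^*$, as in the proof strategy for \Cref{thm:main}. Write $\bm L^*\coloneqq\bm L_{\G_n^*}=\nabla^2(-l)(\bm u^*)$ and $\bm s\coloneqq\nabla l(\bm u^*)=\sum_{k<\ell}(Y_{k\ell}-m_{k\ell}\sigma(\bm x_{k\ell}^\top\bm u^*))\bm x_{k\ell}$. For the pair $(i,j)=(i_n,j_n)$, the proof of \Cref{thm:main} gives, on an event of probability at least $1-n^{-2}$,
\begin{align*}
(\widehat u_i-\widehat u_j)-(u_i^*-u_j^*)=\bm x_{ij}^\top(\bm L^*)^{\dagger}\bm s+R_{ij},\qquad |R_{ij}|\lesssim \frac{e^{\kappa}(\log n)^2}{\lambda_2(\bm L_{\G_n^*})}.
\end{align*}
That proof bounds the remainder through $Q_{ij}$, \Cref{lem:voltage-spread} and the localization argument. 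I would reuse that decomposition; I would not invoke \eqref{main:bdd} as a black box, since the bound alone does not separate the linear term. Dividing by $\sqrt{\rr_{\G_n^*}(i\leftrightarrow j)}$, the remainder is $O(e^\kappa(\log n)^2/(\lambda_2\sqrt{\rr}))$. This is $o(1)$ exactly under \eqref{eq:an-pair-condition}, because $\lambda_2^2\rr=\omega((\log n)^4)$. Existence and uniqueness of the MLE with high probability also come from \Cref{thm:main}; its hypothesis \eqref{unib} follows from \eqref{eq:an-pair-condition}, since $\rr\le 2/\lambda_2$ by \eqref{transfer} gives $\lambda_2\gtrsim(\log n)^4/\lambda_2$.

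The main step is a CLT for the linear term. Set $\bm v=\bm v^{i\to j}=(\bm L^*)^\dagger\bm x_{ij}$, the voltage profile in $\G_n^*$. Then
\begin{align*}
T_n\coloneqq\bm x_{ij}^\top(\bm L^*)^{\dagger}\bm s=\sum_{k<\ell}\sum_{r=1}^{m_{k\ell}}(\xi_{k\ell r}-\sigma_{k\ell})(v_k-v_\ell),
\end{align*}
where the $\xi_{k\ell r}$ are independent Bernoulli$(\sigma_{k\ell})$ outcomes of the individual comparisons. The variance is $\sum_{k<\ell}m_{k\ell}\sigma'_{k\ell}(v_k-v_\ell)^2=\bm v^\top\bm L^*\bm v=\rr_{\G_n^*}(i\leftrightarrow j)$, which is exactly the normalization. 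For Lindeberg (or Lyapunov), the summands are bounded by $|v_k-v_\ell|\le \rr$ by the maximum principle \eqref{mp}. The Lyapunov ratio is therefore at most $\max|v_k-v_\ell|^2\cdot\rr/\rr^{2}\le 1\cdot\rr^2/\rr^2$, which is not small, so the crude maximum principle is insufficient. This is the obstacle I expect. I would instead bound the per-summand size relative to $\sqrt{\rr}$: we need $\max_{k,\ell}|v_k-v_\ell|=o(\sqrt{\rr})$. Since $\max|v_k-v_\ell|\le\rr$ and $\rr\le 2/\lambda_2\to0$, we get $\rr/\sqrt{\rr}=\sqrt{\rr}\to0$. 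Each summand is then uniformly $o(1)$ times the standard deviation, and Lindeberg holds trivially. Hence $T_n/\sqrt{\rr}\Rightarrow\mathcal N(0,1)$, and Slutsky combined with the remainder bound gives \eqref{eq:an-oracle-clt}.

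For the plug-in version, let $\widehat{\G}_n$ have weights $m_{k\ell}\sigma'(\bm x_{k\ell}^\top\uu)$. By \eqref{main:glob}, $\Delta_n=o(1)$ with high probability, and $\sigma'$ is log-Lipschitz. Hence $\sigma'(\bm x^\top\uu)/\sigma'(\bm x^\top\bm u^*)\in[e^{-\Delta_n},e^{\Delta_n}]$ for every edge. Applying \Cref{lm:rp} in both directions then gives $\rr_{\widehat\G_n}(i\leftrightarrow j)/\rr_{\G_n^*}(i\leftrightarrow j)\in[e^{-\Delta_n},e^{\Delta_n}]\to1$ in probability, and Slutsky finishes. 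The delicate part is checking that the remainder in the proof of \Cref{thm:main} is stated for the exact linear term $\bm x_{ij}^\top(\bm L^*)^\dagger\bm s$, with the sharper spectral-gap bound and not only in absolute-value form. If it is instead stated relative to the one-step point, I would check that the two agree.
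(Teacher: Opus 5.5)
Your proposal is correct and follows the paper's proof essentially step for step: you use the same split into the one-step linearization $\bm x_{ij}^\top(\vv-\bm u^*)=\bm x_{ij}^\top\bm L_{\G_n^*}^\dagger\bm Q\bm\xi$ (exactly your $T_n$, since $\bm Q\bm\xi=\nabla l(\bm u^*)$) and the remainder $\bm x_{ij}^\top(\uuu-\vv)$ controlled by \eqref{bdd2}, the same Lindeberg CLT for the linear term, and the same Rayleigh monotonicity plus Slutsky argument for the plug-in version. The differences are cosmetic: you bound each normalized summand by $\sqrt{\rr_{\G_n^*}(i\leftrightarrow j)}$ via the maximum principle, whereas the paper uses Cauchy--Schwarz to get $\max_e\sqrt{\rr_{\G_n^*}(e)}$ (your intermediate claim that the maximum principle is insufficient is wrong, since the ratio you wrote is at most $\rr\to0$, and your final argument uses that same bound), and your log-Lipschitz comparison of $\sigma'$ replaces the paper's $1\pm2\Delta_n$; also, in your shorthand, \eqref{transfer} gives $2\lambda_2\ge\lambda_2^2\rr=\omega((\log n)^4)$, which is the clean way to get \eqref{unib}, rather than the garbled inequality you wrote.
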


\begin{remark}\label{rem:1}
By \eqref{transfer}, condition \eqref{eq:an-pair-condition} implies $\lambda_2(\bm L_{\G_n^*})=\omega((\log n)^4)$, and hence, in particular, the  sufficient condition \eqref{unib} for the uniform consistency result.
\end{remark}

The proof is given in \Cref{sec:an}. In contrast to existing asymptotic normality results for the BT model under random comparison designs \cite{han2020asymptotic,gao2023uncertainty,han2024statistical}, \Cref{thm:norm} applies to a fixed comparison design. Moreover, condition \eqref{eq:an-pair-condition} is expressed directly in terms of the spectral gap and the effective resistance and is therefore relatively easy to verify, in contrast to the more abstract conditions in \citet{han2023unified}. An important feature of our result is that it applies to any prespecified sequence of vertex pairs rather than requiring a condition to hold uniformly over all pairs. Consequently, \eqref{eq:an-pair-condition} may hold for some pair sequences but fail for others. This pair-specific formulation captures local heterogeneity in the comparison graph and provides a finer characterization of when asymptotic normality holds.

\subsection{Examples}\label{sec:exam}

We now apply the results of \Cref{sec:unif1} to several specific comparison graph designs. We first consider a generalized Erd\H{o}s--R\'enyi model in which edges are formed independently with potentially different probabilities. This model includes the standard Erd\H{o}s--R\'enyi model and the SBM as special cases while allowing for more general forms of heterogeneity. For ease of exposition, we focus on the setting in which each pair of objects is compared at most once. The case of repeated comparisons can be analyzed similarly, at the cost of heavier notation.

\begin{theorem}[Generalized Erd\H{o}s--R\'enyi model]\label{thm:ger}
For each $n$, let $\G_n=([n], \M_n)$ with edges $\M_n=\{m_{ij}^{(n)}\}_{1\leq i<j\leq n}$ generated by
\begin{align*}
 m_{ij}^{(n)}=A_{ij}^{(n)},\qquad A_{ij}^{(n)}\sim\operatorname{Bernoulli}\bigl(p_{ij}^{(n)}\bigr),\qquad 1\le i<j\le n,
\end{align*}
where the random variables $\{A_{ij}^{(n)}\}_{1\leq i<j\leq n}$ are independent. Set the minimum comparison rate as $p^{(n)}_{\min}\coloneqq\min_{1\le i<j\le n}p_{ij}^{(n)}$. Let $\bar{d}^{(n)}_i = \sum_{j\neq i}p_{ij}^{(n)}$ denote the expected degree at vertex $i$, and set the maximum expected degree among all vertices as $\bar{d}^{(n)}_{\max} = \max_i\bar{d}^{(n)}_i$.  
\begin{enumerate}
\item [(i)] If $np_{\min}^{(n)} = \omega((\log n)^2)$, then $\lambda_2(\bm L_{\G_n})\gtrsim np_{\min}^{(n)} =\omega((\log n)^2)$ a.s. 
Under Assumption~\ref{ass:bdd}, the weighted graph sequence $\G_n^*$ thus satisfies the spectral-gap condition in \Cref{thm:main}. Consequently, the MLE exists uniquely with probability tending to one, and, simultaneously for all $i\neq j$,
\begin{align}
|(\widehat{u}_i-\widehat{u}_j)-(u_i^*-u_j^*)|\lesssim e^{\kappa}\sqrt{\frac{\log n}{\min\{\bar{d}_i^{(n)}, \bar{d}_j^{(n)}\}}} + \frac{e^{2\kappa} (\log n)^2}{np_{\min}^{(n)}}.\label{random-bdd}
\end{align}
\item [(ii)] If $(np_{\min}^{(n)})^2/\bar{d}^{(n)}_{\max}=\omega((\log n)^4)$, then $\min_{i< j}\lambda_2^2(\bm L_{\G_n})\rr_{\G_n}(i\leftrightarrow j)=\omega((\log n)^4)$ a.s. Under Assumption~\ref{ass:bdd}, for any fixed sequence $(i_n, j_n)$, the MLE exists uniquely with probability tending to one, and
\begin{align}\label{random-an}
\frac{(\widehat u_{i_n}-\widehat u_{j_n})-(u_{i_n}^*-u_{j_n}^*)}
{\sqrt{\rr_{\widehat{\G}_n}(i_n\leftrightarrow j_n)}}
\Rightarrow \mathcal N(0,1),
\qquad \text{as } n\to\infty,
\end{align}
where $\widehat{\G}_n$ is a weighted graph on $[n]$ with edge weights $\{m^{(n)}_{ij}\sigma'(\bm x_{ij}^\top\uu)\}_{1\leq i<j\leq n}$ and $\sigma$ is the logistic function.  
\end{enumerate}    
\end{theorem}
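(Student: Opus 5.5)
The plan is to reduce everything to two random-graph estimates, a lower bound on the spectral gap of $\bm L_{\G_n}$ and two-sided bounds on the degrees, and then feed them into \Cref{thm:main}, \Cref{prop:er} and \Cref{thm:norm} via \eqref{yh} and \Cref{lm:rp}.

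\textbf{Spectral gap and degrees.} Write $\bm L_{\G_n}=\E\bm L_{\G_n}+(\bm L_{\G_n}-\E\bm L_{\G_n})$. Since $p_{ij}\ge p_{\min}$ for every pair, \Cref{lm:rp} gives $\lambda_2(\E\bm L_{\G_n})\ge p_{\min}\lambda_2(\bm L_{K_n})=np_{\min}$. For the fluctuation $\bm L_{\G_n}-\E\bm L_{\G_n}=\sum_{i<j}(A_{ij}-p_{ij})\bm x_{ij}\bm x_{ij}^\top$, I would apply the matrix Bernstein inequality. The summands have norm at most $2$, and the variance proxy is bounded by $\|\sum p_{ij}(1-p_{ij})\cdot 2\bm x_{ij}\bm x_{ij}^\top\|\lesssim \bar d_{\max}$. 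This gives a fluctuation of order $\sqrt{\bar d_{\max}\log n}+\log n$, which is too weak when $\bar d_{\max}\gg np_{\min}$.

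To avoid this degree-imbalance loss, I would use the \emph{matrix Chernoff lower tail} instead. The summands $A_{ij}\bm x_{ij}\bm x_{ij}^\top$ are PSD with norm at most $2$, and on $\bm 1^\perp$ their mean has minimum eigenvalue at least $np_{\min}$. Chernoff then gives $\lambda_2(\bm L_{\G_n})\ge np_{\min}/2$ with probability at least $1-n\exp(-c\,np_{\min})$. Under $np_{\min}=\omega((\log n)^2)$ this failure probability is summable in $n$, so Borel--Cantelli yields the claim almost surely; the ``a.s.'' is read as eventually along the sequence. Similarly, scalar Chernoff bounds and a union bound give $d_i\asymp\bar d_i$ for all $i$ eventually a.s., because $\bar d_i\ge (n-1)p_{\min}\gg\log n$.

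\textbf{Part (i).} Condition on the design, which is independent of the outcomes. By \eqref{yh} and \Cref{lm:rp}, $\lambda_2(\bm L_{\G_n^*})\ge \lambda_2(\bm L_{\G_n})/(4e^\kappa)\gtrsim e^{-\kappa}np_{\min}=\omega((\log n)^2)$, so \Cref{thm:main} applies. Its second term becomes $e^{2\kappa}(\log n)^2/(np_{\min})$. For the first term, \Cref{lm:rp} gives $\rr_{\G_n^*}\le 4e^\kappa\rr_{\G_n}$. The upper bound in \Cref{prop:er}, with $\lambda_2(\bm L_{\G_n})\ge 1$, then gives $\rr_{\G_n}(i\leftrightarrow j)\le 2(1/d_i+1/d_j)\lesssim 1/\min\{\bar d_i,\bar d_j\}$. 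Hence $\sqrt{e^\kappa\rr_{\G_n^*}\log n}\lesssim e^\kappa\sqrt{\log n/\min\{\bar d_i,\bar d_j\}}$. The probability bound $1-n^{-2}$ from \Cref{thm:main}, combined with the design event, gives probability tending to one.

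\textbf{Part (ii).} The lower bound in \Cref{prop:er} gives $\rr_{\G_n}(i\leftrightarrow j)\ge 1/d_i+1/d_j-2/(d_id_j)\ge 1/\max(d_i,d_j)\cdot(1-o(1))\gtrsim 1/\bar d_{\max}$. Together with $\lambda_2\gtrsim np_{\min}$, this yields $\lambda_2^2\rr\gtrsim (np_{\min})^2/\bar d_{\max}=\omega((\log n)^4)$, uniformly over pairs. Transferring to $\G_n^*$ costs only $\kappa$-dependent constants, so condition \eqref{eq:an-pair-condition} holds conditionally on the design, eventually a.s. \Cref{thm:norm}, including its plug-in version, then gives \eqref{random-an} conditionally. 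Because the conditional convergence holds on design events of probability tending to one, it also holds unconditionally by dominated convergence of the characteristic functions.

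The main obstacle is the spectral-gap step. The bound must depend only on $p_{\min}$, so I rely on matrix Chernoff applied to PSD summands rather than on a deviation bound. I expect the $(\log n)^2$ rate (rather than $\log n$) to be what provides the summable tail needed for almost-sure statements.
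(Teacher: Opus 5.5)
Your proposal is correct. Apart from the spectral-gap step, it follows the paper's proof: degree concentration by scalar Chernoff plus a union bound, two-sided resistance bounds from \Cref{prop:er}, transfer to $\G_n^*$ via \eqref{yh} and \Cref{lm:rp}, and then \Cref{thm:main} and \Cref{thm:norm}.

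\textbf{Spectral gap.} The paper does not use matrix concentration here. It uses a monotone coupling $B_{ij}=\mathbb I\{U_{ij}\le p_{\min}\}\le A_{ij}=\mathbb I\{U_{ij}\le p_{ij}\}$, so that $\G_n$ contains a homogeneous Erd\H{o}s--R\'enyi graph $G(n,p_{\min})$. It then combines Rayleigh monotonicity with a cited bound $\lambda_2\ge np_{\min}/2$ (eventually a.s.) for $G(n,p_{\min})$.

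Your matrix Chernoff lower-tail argument on $\bm 1^\perp$, using $\E\bm L_{\G_n}\succeq p_{\min}\bm L_{K_n}$, reaches the same constant directly. It is self-contained and makes the failure probability $(n-1)e^{-c\,np_{\min}}$ explicit. It also shows transparently why only $p_{\min}$ matters: the lower tail depends only on $\lambda_{\min}$ of the mean and the uniform norm bound on the summands, not on $\bar d_{\max}$. The coupling route is shorter and reuses known homogeneous results. It is the same monotonicity idea, applied pathwise to the random graph rather than to its mean Laplacian.

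\textbf{Deconditioning.} You make explicit the passage from conditional to unconditional convergence, which the paper leaves implicit. This works because the good design event holds eventually almost surely. Hence the conditional characteristic functions converge almost surely, and dominated convergence applies.

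\textbf{One small correction.} The $(\log n)^2$ rate is not what makes the tail summable. Already $np_{\min}=\omega(\log n)$ gives $(n-1)e^{-c\,np_{\min}}\le n^{-2}$ eventually. The $(\log n)^2$ rate is needed only to meet the spectral-gap hypothesis \eqref{unib} of \Cref{thm:main}.
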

The proof is based on a coupling argument together with concentration estimates and is given in \Cref{sec:ger}. By Rayleigh monotonicity (\Cref{lm:rp}), the uniform consistency result remains valid when each observed edge is assigned an arbitrary positive integer number of independent comparisons.  In the homogeneous model in which $p_{ij}^{(n)}=p_{\min}^{(n)}=p_n$ for all $i\neq j$, the first term in \eqref{random-bdd} dominates whenever $np_n\gg(\log n)^3$. In this regime, \eqref{random-bdd} agrees with the usual optimal uniform rate \cite{chen2022partial}. More generally, \eqref{random-bdd} provides a baseline guarantee without imposing any upper bound on the heterogeneity of the edge probabilities. In particular, it includes the SBM settings in \cite{han2023general,han2023unified} as special cases. 

The asymptotic normality statement requires an additional balancing condition. Unlike alternative conditions imposed directly on the maximum comparison probability \cite{han2024statistical}, our condition is formulated in terms of the maximum expected degree. For example, setting $p_{ij}^{(n)}=1$ for a single chosen pair $\{i, j\}$ changes $\bar d_{\max}^{(n)}$ by at most one and therefore does not affect its asymptotic order. Thus, our condition is insensitive to a finite number of unusually large edge probabilities.

We next consider an imbalanced complete bipartite comparison graph to examine the distinct contributions of the pair-specific effective-resistance term and the global nonlinear remainder in \eqref{main:bdd}. For this design, the two terms can have different asymptotic orders. Such designs may arise in applications such as item response theory \cite{chen2025item}.

\begin{theorem}[Imbalanced complete bipartite graphs]\label{thm:bipartite}
For each $n\geq 4$, let $\mathcal V_{1,n}=\{1,2\}$ and $\mathcal V_{2,n}=\{3,\ldots,n\}$. Let $\G_n=L_n\mathcal K_{2,n-2}$ be the weighted complete bipartite graph with vertex partition $\mathcal V_{1,n}\sqcup\mathcal V_{2,n}$, where every edge has
weight $L_n$. Suppose $L_n=\omega((\log n)^2)$. Under Assumption~\ref{ass:bdd}, for all sufficiently large $n$,
with probability at least $1-n^{-2}$, the MLE $\uu$ exists
uniquely and, simultaneously for all distinct $i,j\in[n]$,
\begin{align}
\left|(\widehat u_i-\widehat u_j)-(u_i^*-u_j^*)\right|\lesssim
\begin{cases}
\displaystyle
e^\kappa
\sqrt{\frac{2\log n}{(n-2)L_n}}
+
\frac{e^{2\kappa}(\log n)^2}{L_n},
& i, j\in\mathcal V_{1,n},
\\[2.5ex]
\displaystyle
e^\kappa
\sqrt{\frac{\log n}{L_n}}
+
\frac{e^{2\kappa}(\log n)^2}{L_n},
& i,j\in\mathcal V_{2,n},
\\[2.5ex]
\displaystyle
e^\kappa
\sqrt{
\frac{(n-1)\log n}{2(n-2)L_n}
}
+
\frac{e^{2\kappa}(\log n)^2}{L_n},
& i\in\mathcal V_{1,n},\
j\in\mathcal V_{2,n},
\end{cases}
\label{bipartite:bdd}
\end{align}
where the implicit constant is absolute. In particular, all three bounds are $o(1)$. 

Moreover, in the special case $u_1^*= -u_2^*=\log 2$ and $u^*_i=0$ for $3\leq i\leq n$, the estimated utility difference between objects $1$ and $2$ admits the asymptotics 
\begin{align}
(\widehat u_1-\widehat u_2)-(u_1^*-u_2^*)=\frac{3}{\sqrt{(n-2)L_n}}Z_n+\frac{3}{4L_n}+o_{\P}(L_n^{-1}), \label{bipartite:expansion}
\end{align}
where $Z_n\Rightarrow\mathcal N(0,1)$ as $n\to\infty$.
\end{theorem}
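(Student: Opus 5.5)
The plan is to obtain the three-case bound \eqref{bipartite:bdd} from \Cref{thm:main}, and then derive the expansion \eqref{bipartite:expansion} by an explicit second-order analysis in which the graph's symmetry reduces everything to low dimension.

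\emph{Spectrum and resistances.} The Laplacian of $L_n\mathcal K_{2,n-2}$ has eigenvalues $0$, $2L_n$ (multiplicity $n-3$), $(n-2)L_n$ and $nL_n$, so $\lambda_2(\bm L_{\G_n})=2L_n$ for $n\ge4$. The resistances follow from a direct series--parallel computation:
\begin{itemize}
\item between $1$ and $2$ there are $n-2$ parallel paths of resistance $2/L_n$, so $\rr=2/((n-2)L_n)$;
\item between $k,l\in\mathcal V_{2,n}$ the network is two parallel length-two paths through $1$ and $2$, giving $\rr=1/L_n$;
\item between $1$ and $k\in\mathcal V_{2,n}$ a short harmonic computation gives $\rr=(n-1)/(2(n-2)L_n)$.
\end{itemize}
By \eqref{yh} and \Cref{lm:rp}, $\rr_{\G_n^*}\le 4e^\kappa\rr_{\G_n}$ and $\lambda_2(\bm L_{\G_n^*})\ge 2L_n/(4e^\kappa)$. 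Since $L_n=\omega((\log n)^2)$, \eqref{unib} holds. Substituting into \eqref{main:bdd} produces \eqref{bipartite:bdd} with the stated factors $e^\kappa$ and $e^{2\kappa}$, and all three bounds are $o(1)$.

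\emph{The expansion for the pair $(1,2)$.} Fix the given $\bm u^*$, so $\sigma'$ equals $2/9$ on every edge. Let $\bm\delta=\uu-\bm u^*$ and write the score equations. For $k\ge3$,
\[
\sum_{a=1,2}\bigl[\sigma(u_a-u_k)-\sigma(u_a^*-u_k^*)\bigr]L_n=\varepsilon_{ak}
\]
(with appropriate signs), where $\varepsilon_{ak}=Y_{ak}-L_n\sigma(u_a^*-u_k^*)$ are independent centered binomials. For $a=1,2$ the analogous equation sums over $k$. By \eqref{main:bdd}, $\|\bm\delta\|_\infty=O_{\P}(\sqrt{\log n/L_n})$.

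Expand each $\sigma$ to second order: $\sigma(x^*+h)-\sigma(x^*)=\sigma'h+\tfrac12\sigma''h^2+O(h^3)$. Here $\sigma''(\log2)=-\sigma''(-\log2)=-2/27$. Let $h_{ak}=\delta_a-\delta_k$.
\begin{enumerate}
\item From the vertex-$k$ equation, solve $\delta_k\approx(\delta_1+\delta_2)/2-(\varepsilon_{1k}+\varepsilon_{2k})/(2\sigma'L_n)$ plus a quadratic correction.
\item Take the difference of the equations at vertices $1$ and $2$. The linear part gives $(n-2)\sigma'(\delta_1-\delta_2)$ plus noise averages. The linearized term then has variance equal to the effective resistance of $\G_n^*$, namely $(9/2)\cdot 2/((n-2)L_n)=9/((n-2)L_n)$, which is exactly the stated $3/\sqrt{(n-2)L_n}$ scaling. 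Lindeberg applies because the averages run over $n-2\to\infty$ independent terms.
\item The quadratic terms contribute $\tfrac12\sigma''(\log2)\sum_k h_{1k}^2-\tfrac12\sigma''(-\log2)\sum_k h_{2k}^2$, with opposite signs. Each $h_{ak}$ is driven by the per-edge noise, with $\E h_{ak}^2\approx \mathrm{Var}(\varepsilon_{1k}-\varepsilon_{2k})/(2\sigma'L_n)^2$-type expressions of order $1/L_n$. By the law of large numbers over $k$, $\sum_k h_{ak}^2=(n-2)c/L_n+o_{\P}((n-2)/L_n)$.
\item Dividing by $(n-2)\sigma'$ gives the deterministic bias. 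The constant must come out as $3/4$; I would verify it directly, tracking that the cross term $\delta_1-\delta_2$ is negligible because it is $O(\sqrt{\log n/(nL_n)})$.
\end{enumerate}

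The main obstacle is rigorously controlling the cubic and cross remainders at order $o(L_n^{-1})$ rather than at the $(\log n)^2/L_n$ level that \Cref{thm:main} supplies. I would handle this by using the explicit reduction: conditional on $(\delta_1,\delta_2)$, each $\delta_k$ solves a one-dimensional equation whose error is $O_{\P}(L_n^{-1/2})$. The cubic terms are then $O((\log n)^{3/2}L_n^{-3/2})=o(L_n^{-1})$ since $L_n\gg(\log n)^2$, and the sums over $k$ concentrate.
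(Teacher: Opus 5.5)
Your derivation of \eqref{bipartite:bdd} is essentially the paper's. You compute the three effective resistances exactly by series--parallel reduction, whereas the paper bounds them via \Cref{prop:er}; your values $2/((n-2)L_n)$, $1/L_n$ and $(n-1)/(2(n-2)L_n)$ are correct. You then transfer to $\G_n^*$ and invoke \Cref{thm:main}, as the paper does.

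For \eqref{bipartite:expansion} your route is genuinely different. The paper splits $\uuu-\bm u^*=(\vv-\bm u^*)+(\uuu-\vv)$ and writes the remainder through the identity \eqref{pang1}. It reads off the voltages $\bm x_{12}^\top\bm L_{\G_n'}^\dagger\bm x_{rk}$ from the parallel-path structure of $\G_n'$. It then evaluates the quadratic term in the \emph{linearized} errors $d_{rk}$, for which $d_{1k}+d_{2k}=9S_k/(2L_n)$ and $d_{1k}-d_{2k}=3Z_n/\sqrt{NL_n}$ hold exactly. You instead expand the MLE score equations directly. Subtracting the vertex-$1$ and vertex-$2$ equations gives
\begin{align*}
\delta_1-\delta_2=\frac{3Z_n}{\sqrt{NL_n}}+\frac{1}{6N}\sum_{k=3}^n\bigl(h_{1k}^2+h_{2k}^2\bigr)+R_n,\qquad |R_n|\lesssim\frac1N\sum_{k=3}^n\bigl(|h_{1k}|^3+|h_{2k}|^3\bigr),
\end{align*}
with $N=n-2$, $S_k=\varepsilon_{1k}+\varepsilon_{2k}$, and $3Z_n/\sqrt{NL_n}=\frac{9}{2NL_n}\sum_k(\varepsilon_{1k}-\varepsilon_{2k})$ your linear term. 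This is the paper's expression with $h_{ak}$ in place of $d_{ak}$, and it does yield the constant $3/4$. Your route avoids the surrogate likelihood and $\G_n'$ entirely. The cost is that your quadratic term involves $\uu$ rather than $\vv$, so you need a self-consistent bound on $\delta_1-\delta_2$ that the paper never needs.

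That is exactly where your treatment of the ``main obstacle'' fails under the bare assumption $L_n=\omega((\log n)^2)$.
\begin{itemize}
\item $(\log n)^{3/2}L_n^{-3/2}=o(L_n^{-1})$ requires $L_n=\omega((\log n)^3)$. Likewise, \eqref{main:bdd} gives only $\|\bm\delta\|_\infty\lesssim\sqrt{\log n/L_n}+(\log n)^2/L_n$, not $O_{\P}(\sqrt{\log n/L_n})$.
\item $\delta_1-\delta_2$ is not $O(\sqrt{\log n/(nL_n)})$. By the very expansion you are proving, it contains the bias $3/(4L_n)$, which dominates whenever $L_n\ll n/\log n$.
\item The a priori bound $(\log n)^2/L_n$ from \Cref{thm:main} makes the term $(\delta_1-\delta_2)^2/2$ inside $\frac1N\sum_k(h_{1k}^2+h_{2k}^2)$ negligible only if $L_n=\omega((\log n)^4)$.
\end{itemize}

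All of this is repairable along the lines you hint at. Put $s_k=(h_{1k}+h_{2k})/2$ and $\tau=(\delta_1-\delta_2)/2$. The vertex-$k$ equation is exactly
\begin{align*}
\sigma(\log 2+\tau+s_k)+\sigma(-\log 2-\tau+s_k)=1+\frac{S_k}{L_n}.
\end{align*}
Since $\sigma(x)+\sigma(-x)=1$, the left side equals $1$ at $s_k=0$ for every $\tau$. The mean-value theorem then gives $s_k=\frac{9S_k}{4L_n}\bigl(1+O(|\tau|+|s_k|)\bigr)$, and in particular $|s_k|\lesssim|S_k|/L_n$ on the event $\Delta_n\le 1$.

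Next, bound the cubic terms by averages rather than maxima:
\begin{align*}
\frac1N\sum_k|h_{ak}|^3\lesssim\frac1N\sum_k\frac{|S_k|^3}{L_n^3}+|\tau|^3=O_{\P}(L_n^{-3/2})+|\tau|^3.
\end{align*}
The displayed identity then gives
\begin{align*}
2|\tau|\le\frac{3|Z_n|}{\sqrt{NL_n}}+O_{\P}(L_n^{-1})+\frac{\tau^2}{3}+C|\tau|^3.
\end{align*}
Since $\tau=o_{\P}(1)$ by \Cref{thm:main}, this bootstraps to $\tau=O_{\P}((NL_n)^{-1/2}+L_n^{-1})$, hence $\tau^2=o_{\P}(L_n^{-1})$ and $R_n=o_{\P}(L_n^{-1})$. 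Finally,
\begin{align*}
\frac{1}{6N}\sum_k\bigl(h_{1k}^2+h_{2k}^2\bigr)=\frac{27}{16NL_n^2}\sum_kS_k^2+o_{\P}(L_n^{-1})=\frac{3}{4L_n}+o_{\P}(L_n^{-1}),
\end{align*}
using $\E S_k^2=4L_n/9$. This completes your argument.
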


\begin{remark}
The uniform consistency result can also be obtained from \cite[Theorem~1]{chen2023ranking} by using our criterion \eqref{chenc} to verify the compatibility condition \cite[Condition~(8)]{chen2023ranking}. In the present setup, this criterion yields the sufficient condition $L_n\gtrsim(\log n)^3$. Alternatively, applying \cite[Theorem~3]{yang2024top} with unit likelihood weights yields the sufficient condition $L_n\gtrsim n^4(\log n)^2$. That theorem builds on \cite{chen2023ranking} and provides explicit conditions involving the spectral gap and maximum weighted degree. Both sufficient conditions are more stringent than the assumption $L_n=\omega((\log n)^2)$ imposed here.
\end{remark}

The proof is based on a direct computation using the asymptotic arguments developed in the preceding proofs and is given in \Cref{sec:k}. By comparing the exact asymptotics in \eqref{bipartite:expansion} and the upper bound \eqref{bipartite:bdd}, we see that the bound in \eqref{main:bdd} is tight up to logarithmic factors and that the global remainder term cannot in general be absorbed into the pair-specific effective-resistance term. Consequently, suitable balancing conditions may be necessary for asymptotic normality without debiasing. Further numerical illustration is provided in \Cref{sec:num}.

\section{Proofs of main results}\label{sec:proof}

\subsection{Proof of \Cref{thm:main}}\label{sec:main}

\subsubsection{Proof sketch}\label{sec:2}
Nearly all routes to uniform consistency rely on Taylor expansion at some stage. A key step in this process is to localize the MLE to a suitable neighborhood of the ground truth; see, e.g., \cite{chen2022partial}. In our analysis, we introduce a surrogate likelihood function that coincides with $l$ in a neighborhood of $\bm u^*$ while remaining globally strongly concave on the feasible region $\{\bm u\in\R^n: \bm 1^\top\bm u = 0\}$, which is similar to the primal-dual witness method in high-dimensional statistics \cite{wainwright2009sharp}. In particular, let $R = \kappa+1$ and define 
\begin{align*}
\widetilde{\psi}(x) = \begin{cases}
\psi(-R) + \psi'(-R)(x+R) + \frac{1}{2}\psi''(-R)(x+R)^2& x<-R\\
\psi(x) & |x|\leq R\\
\psi(R) + \psi'(R)(x-R) + \frac{1}{2}\psi''(R)(x-R)^2& x>R
\end{cases}.
\end{align*}
Replacing $\psi$ by $\widetilde{\psi}$ in \eqref{mle}, we define the surrogate log-likelihood $\widetilde{l}$ and the corresponding MLE by
\begin{align*}
\widetilde{l}(\bm u) = \sum_{1\leq i<j\leq n}\left[Y_{ij}\bm x_{ij}^\top\bm u-m_{ij}\widetilde{\psi}(\bm x_{ij}^\top\bm u)\right],\qquad \widetilde{\bm u} \coloneqq \argmax_{\bm u: \bm 1^\top\bm u= 0}\ \widetilde{l}(\bm u).
\end{align*}
The unique existence of $\widetilde{\bm u}$ follows from the construction since $\widetilde{l}$ is strongly concave. Moreover, the strong concavity of $\widetilde{l}$ ensures that its Hessian restricted to $\{\bm u\in\R^n: \bm 1^\top\bm u = 0\}$ is uniformly bounded away from singularity regardless of $\widetilde{\bm u}$. 

Our strategy is to first show that $\widetilde{\bm u}$ is uniformly consistent for $\bm u^*$, i.e., $\|\widetilde{\bm u}-\bm u^*\|_\infty=o_{\P}(1)$. Since the dynamic range of $\bm u^*$ is bounded by $\kappa$, with high probability, the dynamic range of $\widetilde{\bm u}$ is bounded by $\kappa + 1$ for all large $n$. Consequently, $\nabla l(\widetilde{\bm u})=\nabla\widetilde{l}(\widetilde{\bm u})=\bm 0$. By the uniqueness of the MLE, it follows that $\uu=\widetilde{\bm u}$, which completes the proof.

To establish the uniform consistency of the surrogate MLE, we apply a perturbation analysis to $\widetilde{l}$. A first idea is to perturb along the segment connecting $\bm u^*$ and $\uuu$, i.e., $\bm u_t = (1-t)\bm u^*+t\uuu$. By the fundamental theorem of calculus, 
\begin{align*}
- \nabla \widetilde{l} (\bm u^*) &=\nabla \lll (\uuu) - \nabla\lll (\bm u^*) = \left(\int_0^1\nabla^2\lll (\bm u_t) \d t\right)(\uuu-\bm u^*).
\end{align*}
Here the (integrated) negative Hessian can be interpreted as a weighted graph Laplacian with null space $\mathrm{span}\{\bm 1\}$ (see \Cref{sec:proof}). Since $\bm 1^\top (\uuu-\bm u^*) = 0$, 
\begin{align*}
\uuu-\bm u^* = \left(\int_0^1-\nabla^2\lll (\bm u_t) \d t\right)^\dagger\nabla \widetilde{l}(\bm u^*). 
\end{align*}
The integrated Hessian depends on the random estimator $\uuu$ and is therefore dependent on $\nabla\widetilde{l}(\bm u^*)$, making a direct concentration argument difficult. To separate the leading random fluctuation from this nonlinear dependence, we freeze the Hessian at $\bm u^*$ and define the linearized estimator $\vv$ by
\begin{align}
\vv-\bm u^*=\bigl(-\nabla^2\widetilde{l}(\bm u^*)\bigr)^\dagger\nabla\widetilde{l}(\bm u^*).
\label{vvd}
\end{align}
The vector $\vv$ can be viewed as the result of applying a single Newton--Raphson step for maximizing $\widetilde{l}$, initialized at $\bm u^*$. Similar linearizations appear as the first step of the preconditioned-gradient-descent analyses in \cite{chen2023ranking,yang2024top}, where the entire trajectory from $\bm u^*$ to the MLE (under the original log-likelihood) is controlled by induction. Closing the induction requires the nonlinear remainder to be absorbed by the proposed error bound, leading to additional self-consistency conditions such as \cite[Condition~(8)]{chen2023ranking}. 

Here we directly analyze the perturbation of $\widetilde{l}$ between $\vv$ and $\uuu$. By the definition of $\vv$, the constant and linear terms cancel when $\nabla\widetilde{l}(\vv)$ is expanded around $\bm u^*$, leaving a remainder that is quadratic in $\vv-\bm u^*$. Since $\vv-\bm u^*$ is small by concentration, this second-order improvement allows us to control $\uuu-\vv$ without tracking the full optimization trajectory. Meanwhile, the linearization fluctuation $\vv-\bm u^*$ can be controlled using the effective resistance of $\G^*_n$. 

\subsubsection{Full proof}
\paragraph{Notation and conditioning.}
Fix $\G=\G^*_n$ and recall the incidence matrix $\bm Q$ defined in \eqref{Q}. To see how the graph structure enters the analysis, we compute the gradient and Hessian of $\lll$: 
\begin{equation}
\begin{aligned}
\partial_k \lll(\bm u) &=
\sum_{j\neq k} (Y_{kj}-m_{kj}\widetilde{\psi}'(\bm x_{kj}^\top\bm u)), \qquad k\in [n], \\
\partial_{kk'} \lll(\bm u) & = \begin{cases}
m_{kk'}\widetilde{\psi}''(\bm x_{kk'}^\top\bm u)& k\neq k'\\
-\sum_{j\neq k}m_{kj}\widetilde{\psi}''(\bm x_{kj}^\top\bm u)& k = k'
\end{cases}.
\end{aligned}
\end{equation}
Since the dynamic range of $\bm u^*$ is bounded by $\kappa<R$, $\widetilde{\psi}'(\bm x_{kj}^\top\bm u^*)= \psi'(\bm x_{kj}^\top\bm u^*) = \sigma(\bm x_{kj}^\top\bm u^*)$. Define
\begin{align}
\bm\xi\coloneqq(\xi_{ij})_{1\leq i<j\leq n}\in\R^{n\choose 2}, \qquad\text{where $\xi_{ij}\coloneqq Y_{ij}-m_{ij}\sigma(\bm x_{ij}^\top\bm u^*)$}. \label{myxi}
\end{align}
To evaluate $\nabla\lll(\bm u)$ at $\bm u^*$, grouping the terms by edges following the orientation of $\bm Q$, 
\begin{align}
\nabla\lll(\bm u^*)=\sum_{1\leq i<j\leq n}\xi_{ij}\bm x_{ij}=\bm Q\bm\xi.
\label{mc}
\end{align}
Since the components of $\bm\xi$ are independent and centered, $\bm\xi$ can be interpreted as random noise on the edges, while $\nabla\lll(\bm u^*)=\bm Q\bm\xi$ is its divergence at the vertices.

The negative Hessian $-\nabla^2\lll (\bm u)$ is a weighted graph Laplacian with weights $\M_n(\bm u) = \{m_{ij}\widetilde{\psi}''(\bm x_{ij}^\top\bm u)\}_{1\leq i<j\leq n}$:    
\begin{align}
-\nabla^2 \lll(\bm u) = \bm L_{\M_n(\bm u)} \coloneqq \bm Q\bm W(\bm u)\bm Q^\top, \qquad \bm W(\bm u)\coloneqq\text{diag}(\M_n(\bm u)). \label{mr}
\end{align}
When $\bm u = \bm u^*$, $\widetilde{\psi}''(\bm x_{ij}^\top\bm u) = \psi''(\bm x_{ij}^\top\bm u)=\sigma'(\bm x_{ij}^\top\bm u)$.
Thus, $\M_n^*=\M_n(\bm u^*)$, which coincides with the collection of edge weights defining $\G_n^*$.

For convenience, we use ordered pairs $e = (i, j)$, $1\leq i<j\leq n$ to index the rows and columns of $\bm W(\bm u)$. For any $\bm u\in\R^n$ and $e = (i, j)$, a direct computation shows that 
\begin{align}
\frac{m_e}{2(1+e^{\kappa+1})}\leq [\bm W(\bm u)]_{ee}\leq \frac{m_e}{4}. \label{w-comp}
\end{align}
By \Cref{lm:rp}, the spectral gap and effective resistance of networks with conductances $\M_n^*$ and $\M_n(\bm u)$ are equivalent up to constants depending only on $\kappa$. 

We now define a high-probability event conditioned on which the subsequent discussion is carried out. First note that $\lambda_2(\bm L_{\G^*_n})=\omega((\log n)^2)$ and Cheeger's inequality ($h_{\G^*_n}\geq\lambda_2(\bm L_{\G^*_n})/2$) \cite{mohar1989isoperimetric, mohar1993eigenvalues} imply that $h_{\G^*_n} = \omega((\log n)^2)$. Under this condition, the MLE $\uu$ uniquely exists and is finite with probability at least $1-n^{-3}$ for all large $n$. The proof is standard and follows from a concentration argument plus a union bound; see, e.g., \cite[Lemma~C.1]{han2023unified}. Moreover, the components of 
\begin{align}
\bm\eta\coloneqq [\bm W(\bm u^*)^\dagger]^{1/2}\bm\xi\label{eta}
\end{align}
indexed by compared edges are independent, centered, standardized binomial residuals whose sub-Gaussian norms are bounded by $C e^{\kappa/2}$ for an absolute constant $C>0$. By Hoeffding's inequality and a union bound, with probability at least $1-n^{-3}$, simultaneously for all $i, j\in[n]$,
\begin{equation}\label{noise}
\begin{aligned}
\left|\bm x^\top_{ij}\bm L_{\G_n^*}^\dagger\bm Q\bm W(\bm u^*)^{1/2}\bm\eta\right|
&\lesssim \left\|\bigl(\bm Q\bm W(\bm u^*)^{1/2}\bigr)^\top\bm L_{\G_n^*}^\dagger\bm x_{ij}\right\|_2\sqrt{e^{\kappa}\log n}\\
& = \sqrt{e^{\kappa}\rr_{\G_n^*}(i\leftrightarrow j)\log n}
\end{aligned}, 
\end{equation}
where $\rr_{\G_n^*}(i\leftrightarrow j)$ denotes the effective resistance between $i$ and $j$ in $\G_n^*$. 
By a union bound, both the MLE-existence event and \eqref{noise} hold with probability at least $1-n^{-2}$. In the following discussion, we condition on this event.

\paragraph{Analysis of $\vv-\bm u^*$.}

We first bound the pairwise difference of $\vv-\bm u^*$ using the effective resistance of $\G^*_n$. This step is similar to the analysis in \cite[Step 1 in Section 3.3]{chen2023ranking}. Recall the definition of $\vv$ in \eqref{vvd} and substituting $\nabla\lll (\bm u^*)$ and $\nabla^2\lll(\bm u^*)$ using \eqref{mc} and \eqref{mr}:  
\begin{align}
\vv-\bm u^* &= \bm L_{\G_n^*}^\dagger\bm Q\bm\xi= \bm L_{\G_n^*}^\dagger(\bm Q\bm W(\bm u^*)^{1/2})\bm\eta.\label{linear-error} 
\end{align}
For any $i, j\in [n]$, multiplying both sides on the left by $\bm x^\top_{ij}$ and applying \eqref{noise}, 
\begin{equation}\label{bdd1}
\begin{aligned}
|\widetilde{v}_i-u^*_i - (\widetilde{v}_j-u^*_j)| &= \left|\bm x_{ij}^\top\bm L_{\G_n^*}^\dagger(\bm Q\bm W(\bm u^*)^{1/2})\bm\eta\right|\lesssim \sqrt{e^{\kappa}\rr_{\G_n^*}(i\leftrightarrow j)\log n}.
\end{aligned}
\end{equation}
By $\lambda_2(\bm L_{\G^*_n})=\omega((\log n)^{2})$ and \eqref{transfer}, $\rr_{\G^*_n}(i\leftrightarrow j)\leq 2/\lambda_2(\bm L_{\G^*_n})=o((\log n)^{-2})$, so the upper bound in \eqref{bdd1} is $o(1)$. Hence, for all large $n$, the dynamic range of $\vv-\bm u^*$ is bounded by $1/2$, i.e., the dynamic range of $\bm v_t =  (1-t)\bm u^*+t\vv$ is strictly bounded by $R$. 

\paragraph{Analysis of $\uuu-\vv$.}

The analysis of $\uuu-\vv$ is where our approach departs from existing work. We first represent $\nabla \lll(\vv)$ by applying the fundamental theorem of calculus to $\lll$ along $\bm v_t =  (1-t)\bm u^*+t\vv$: 
\begin{align*}
\nabla \lll(\vv) - \nabla \lll(\bm u^*) = \left(\int_0^1\nabla^2\lll(\bm v_t)\d t\right)(\vv - \bm u^*).
\end{align*}
Combining this with \eqref{vvd} and the matrix representation of the integrand in \eqref{mr}, we have 
\begin{equation}
\begin{aligned}\label{myvv}
\nabla \lll(\vv) = \left(\int_0^1\nabla^2\lll(\bm v_t)-\nabla^2\lll(\bm u^*)\d t\right)(\vv - \bm u^*)= \bm Q\bm\Lambda\bm Q^\top (\vv - \bm u^*),
\end{aligned}
\end{equation}
where 
\begin{align*}
\bm\Lambda\coloneqq \int_0^1\bm W(\bm u^*)-\bm W(\bm v_t) \d t. 
\end{align*}

The diagonal entries of $\bm\Lambda$ concern the perturbation of the Hessian of $\lll$ along the path $\bm v_t$. For all large $n$, the dynamic range of $\bm v_t$ remains bounded by $R$. As a result, along the perturbation path $\bm v_t$, $\widetilde{\psi}=\psi$ is smooth ($\widetilde{\psi}$ is not globally $C^3$ due to truncation). For each $e=(i, j)$ with $i<j$ and $|z-\bm x_e^\top\bm u^*|\leq 1/2$,  
\begin{align*}
|\psi'''(z)|\leq \psi''(z)\leq 2\psi''(\bm x_e^\top\bm u^*). 
\end{align*}
By the mean-value theorem, 
\begin{align*}
|[\bm W(\bm u^*)]_{ee} - [\bm W(\bm v_t)]_{ee}| &= m_e|\widetilde{\psi}''(\bm x_e^\top\bm u^*)-\widetilde{\psi}''(\bm x_e^\top\bm v_t)|\\
& = m_e|\psi''(\bm x_e^\top\bm u^*)-\psi''(\bm x_e^\top\bm v_t)|\\
&\leq 2m_e\psi''(\bm x_e^\top\bm u^*)\cdot |\widetilde{v}_i-u^*_i - (\widetilde{v}_j-u^*_j)| t.
\end{align*}
Integrating both sides from $0$ to $1$,  
\begin{equation}\label{diagp}
\begin{aligned}
|[\bm\Lambda]_{ee}|\leq m_e\psi''(\bm x_e^\top\bm u^*)\cdot |\widetilde{v}_i-u^*_i - (\widetilde{v}_j-u^*_j)|\stackrel{\eqref{bdd1}}{\lesssim}  [\bm W(\bm u^*)]_{ee}\sqrt{e^{\kappa}\rr_{\G_n^*}(i\leftrightarrow j)\log n}. 
\end{aligned}
\end{equation}

To connect $\vv$ to $\uuu$, we consider the perturbation of $\lll$ along $\bm w_t = (1-t)\vv + t\uuu$ and apply the fundamental theorem of calculus. Since $\uuu$ is the maximizer of $\lll$, by the first-order optimality condition, $\nabla\lll(\uuu) = \bm 0$. Combining this with \eqref{mr}, 
\begin{equation}\label{214}
\begin{aligned}
\nabla \lll (\vv) = \nabla \lll (\vv) - \nabla \lll (\uuu) &= \left(\int_0^1-\nabla^2\lll(\bm w_t) \d t\right)(\uuu - \vv)\\
& = \bm Q\left(\int_0^1\bm W(\bm w_t) \d t\right)\bm Q^\top (\uuu - \vv). 
\end{aligned} 
\end{equation}
Thus, the integrated negative Hessian in \eqref{214} is a (random) weighted graph Laplacian. We denote the corresponding weighted graph by $\G_n'=([n], \mathcal C)$ and its edge weights by $\mathcal C=\{c_{ij}\}_{1\leq i<j\leq n}$, where $c_{ij}$ denotes the $(i, j)$th diagonal entry of $\int_0^1\bm W(\bm w_t)\d t$. The resulting graph Laplacian is denoted by $\bm L_{\G_n'}$. In particular, $\ker(\bm L_{\G_n'}) = \mathrm{span}\{\bm 1\}$. Since $\bm 1^\top(\uuu - \vv)=0$, substituting $\nabla \lll (\vv)$ in \eqref{214} using \eqref{myvv} and rearranging terms gives
\begin{equation}\label{sec}
\begin{aligned}
\uuu - \vv = \bm L_{\G_n'}^\dagger \nabla \lll (\vv)= \bm L_{\G_n'}^\dagger\bm Q\bm\Lambda\bm Q^\top (\vv - \bm u^*). 
\end{aligned}
\end{equation}
For any $i, j\in [n]$, multiplying on the left of both sides of \eqref{sec} by $\bm x^\top_{ij}$,  
\begin{equation}\label{pang1}
\begin{aligned}
&|(\widetilde{u}_i - \widetilde{v}_i)-(\widetilde{u}_j - \widetilde{v}_j)|\\
=&\ |\bm x_{ij}^\top\bm L_{\G_n'}^\dagger\bm Q\bm\Lambda\bm Q^\top (\vv - \bm u^*)|\\
=&\ \left|\sum_{1\leq k<\ell\leq n}(\bm x_{ij}^\top\bm L_{\G_n'}^\dagger\bm x_{k\ell})\cdot\bm\Lambda_{(k, \ell), (k, \ell)}\cdot \bm x_{k\ell}^\top(\vv - \bm u^*)\right|\\
\lesssim&\ \sum_{1\leq k<\ell\leq n}\left|\bm x_{k\ell}^\top\bm L_{\G_n'}^\dagger\bm x_{ij}\right|([\bm W(\bm u^*)]_{(k, \ell), (k, \ell)}e^{\kappa}\rr_{\G_n^*}(k\leftrightarrow\ell)\log n)\qquad\text{(by \eqref{bdd1} and \eqref{diagp})}\\
\lesssim&\ e^{\kappa}\max_{e}\frac{[\bm W(\bm u^*)]_{ee}}{c_e}\max_{e}\frac{\rr_{\G_n^*}(e)}{\rr_{\G_n'}(e)}\log n\underbrace{\sum_{1\leq k<\ell\leq n}\left|\bm x_{k\ell}^\top\bm L_{\G_n'}^\dagger\bm x_{ij}\right|(c_{k\ell}\rr_{\G'_n}(k\leftrightarrow\ell))}_{\eqqcolon Q_{ij}},
\end{aligned}
\end{equation}
where the last inequality follows from Lemma~\ref{lm:rp} applied to $\G_n^*$ and $\G_n'$. Note that $Q_{ij}$ involves a sum over all edges of $\G'_n$. A crucial step in our analysis is to derive an effective upper bound on $Q_{ij}$ in terms of global properties of $\G'_n$, as stated in the following lemma. 
\begin{lemma}\label{lm:tech}
For all $i, j\in [n]$, 
\begin{align}
Q_{ij}\leq\frac{12+8\log n}{\lambda_2(\bm L_{\G_n'})}.\label{key}
\end{align}
\end{lemma}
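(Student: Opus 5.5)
The plan is to rewrite $Q_{ij}$ in terms of the voltage profile $\bm v=\bm v^{i\to j}=\bm L_{\G_n'}^\dagger\bm x_{ij}$ on $\G_n'$, and then reuse the truncation and level-set argument from the proof of \Cref{lem:voltage-spread}. Since $\bm x_{k\ell}^\top\bm L_{\G_n'}^\dagger\bm x_{ij}=v_k-v_\ell$, we have
\begin{align*}
Q_{ij}=\sum_{k<\ell}\varphi_{k\ell}\,\rr_{\G_n'}(k\leftrightarrow\ell),\qquad \varphi_{k\ell}\coloneqq c_{k\ell}|v_k-v_\ell|=|I_{k\to\ell}(\bm v)|,
\end{align*}
so $Q_{ij}$ is the resistance-weighted total of the unit electrical flow from $i$ to $j$. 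This flow has a useful conservation property. For any upper level set $U=\{k: v_k>s\}$, every cut edge is oriented from $U$ to $U^\complement$, and
\begin{align*}
\sum_{k\in U,\,\ell\notin U}\varphi_{k\ell}=\bm 1_U^\top\bm L_{\G_n'}\bm v=\bm 1_U^\top\bm x_{ij}\in\{0,1\}.
\end{align*}
Hence every level cut carries flow at most one, and the same holds for lower level sets.

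Next, I would split each edge term around the median $\bar v=\med(\bm v)$. Write $|v_k-v_\ell|=\int \bm 1\{t\text{ lies between } v_k,v_\ell\}\,\d t$, separated into $t>\bar v$ and $t<\bar v$. The positive part of $Q_{ij}$ then equals
\begin{align*}
\int_0^\infty\sum_{e\in\partial U_t^+}c_e\,\rr_{\G_n'}(e)\,\d t .
\end{align*}
The task is to show that this integral is of order $(1+\log n)/\lambda_2(\bm L_{\G_n'})$, to match \Cref{lem:voltage-spread}. The intended mechanism is as follows. Order the positive deviations $t_1\ge t_2\ge\cdots$ as in that proof, so that $t_r\le 2/(\lambda_2 r)$, and integrate over the layers $(t_{r+1},t_r]$. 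On each layer the cut $\partial U_t^+$ has $|U_t^+|=r$, and I want to show that the leverage mass crossing it is at most a constant times $r$ plus a term controlled by the unit flow. Summing $r(t_r-t_{r+1})$ over layers telescopes to $\sum_r t_r\le 2(1+\log n)/\lambda_2$. The lower level sets are handled the same way, and the factors should combine to give $(12+8\log n)/\lambda_2$.

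The main obstacle is the per-cut leverage bound. A naive bound $\sum_{e\in\partial U}c_e\rr(e)\le\sum_{k\in U}\ell_{\G_n'}(k)$ fails, since vertex leverages can be of order $n$; at a star centre, for example, $\ell(k)=n-1$. What I would try first is to use the fact that resistances are controlled by the spectral gap, $\rr_{\G_n'}(e)\le 2/\lambda_2$ by \eqref{transfer}, only on the portion of the cut where the electrical flow lives. Concretely, I would split $c_e\rr(e)=\varphi_e\cdot\rr(e)/|v_k-v_\ell|$ on long edges, where $|v_k-v_\ell|$ is comparable to the layer width. There I would use unit flow per cut together with $\rr\le 2/\lambda_2$. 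On the remaining short edges I would use the identity \eqref{lev1} for $\bm\ell_{\G_n'}$, paired against the truncated profile $\bm f^t$. This pairing turns $\bm f^{t\top}\bm L\,\diag\mathrm{vec}(\bm L^\dagger)$ into a sum of differences $L^\dagger_{kk}-L^\dagger_{\ell\ell}$. I would bound these using the maximum principle \eqref{mp} by a multiple of $\rr(k\leftrightarrow\ell)$, and absorb that contribution back into the flow bound \eqref{upp}. If this absorption closes only with a constant larger than one, I would instead bound the short-edge part directly by the Dirichlet-energy estimate $(\bm f^t)^\top\bm L\bm f^t\le t$ from \eqref{upp}, combined with $\rr\le 2/\lambda_2$.
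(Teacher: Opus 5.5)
\textbf{There is a genuine gap.} Your reformulation is correct and natural. The identity $Q_{ij}=\sum_{k<\ell}\varphi_{k\ell}\,\rr_{\G_n'}(k\leftrightarrow\ell)$, the coarea formula for its positive part, and the fact that every level cut carries flow at most one are all fine. What is missing is the decisive estimate on the leverage mass crossing the level cuts, and the mechanisms you propose for it do not work. Write $\lambda_2=\lambda_2(\bm L_{\G_n'})$.

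\emph{No per-cut bound.} Cut by cut there is no bound of the form $Cr$ plus a small flow term. Suppose the source $i$ is joined by unit edges to vertices $m_1,\dots,m_s$, each attached to the rest of the graph by a single edge of weight $\epsilon\ll1$. Then the top layer is $U_t^+=\{i\}$, so $r=1$, yet its cut leverage is $\sum_p\rr_{\G_n'}(i\leftrightarrow m_p)\geq s/(1+\epsilon)$. Charging long edges to the unit flow with $\rr\leq 2/\lambda_2$, layer by layer, costs up to $2/\lambda_2$ per layer. Over up to $n/2$ layers this gives only $O(n/\lambda_2)$.

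\emph{The absorption step is circular.} With $\bm L=\bm L_{\G_n'}$, $\bm d=\diag\mathrm{vec}(\bm L^\dagger)$ and $\bm f=(\bm v-\bar v\bm 1)_+$, your bound $|d_k-d_\ell|\leq\rr_{\G_n'}(k\leftrightarrow\ell)$ gives $|\bm f^\top\bm L\bm d|\leq\sum_{k<\ell}c_{k\ell}\rr_{\G_n'}(k\leftrightarrow\ell)|f_k-f_\ell|$. The right-hand side equals $\int_0^\infty\sum_{e\in\partial U_t^+}c_e\rr_{\G_n'}(e)\,\d t$, which is exactly the quantity you are bounding, so the resulting inequality is vacuous.

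\emph{The fallback does not help.} The estimate $(\bm f^t)^\top\bm L\bm f^t\leq t$ controls something quadratic in the increments of $\bm v$, while $Q_{ij}$ is linear in them. Cauchy--Schwarz with $\bm v^\top\bm L\bm v=\rr_{\G_n'}(i\leftrightarrow j)$ and $\sum_e c_e\rr_{\G_n'}(e)=n-1$ yields only $Q_{ij}\leq 2\sqrt{n}/\lambda_2$.

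\textbf{The missing idea.} The bound you dismissed as naive is the right first step, provided the Laplacian in \eqref{lev1} is moved onto the voltage profile rather than onto $\bm d$.
\begin{itemize}
\item The paper uses $|v_k-v_\ell|\leq|v_k-a|+|v_\ell-a|$. After integrating in $t$, this is precisely your cut-leverage-by-vertex-leverage bound. It gives $Q_{ij}\leq\sum_k|v_k-a|\,\ell_{\G_n'}(k)=2(1-\tfrac1n)\|\bm v-a\bm 1\|_1-\bm d^\top\bm L|\bm v-a\bm 1|$ by \eqref{lev1}. A large vertex leverage, as at a star centre, is cancelled exactly by the correction term.
\item Because $\bm v$ is harmonic off $\{i,j\}$, the profile $|\bm v-a\bm 1|$ is subharmonic there: $\bm e_k^\top\bm L|\bm v-a\bm 1|\leq\sgn(v_k-a)\,\bm e_k^\top\bm x_{ij}$. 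So its Laplacian is nonpositive off $\{i,j\}$ and at most $1$ at $i$ and $j$.
\item Since $\bm 1^\top\bm L=\bm 0$, this forces $\|\bm L|\bm v-a\bm 1|\|_1\leq 4$. H\"older with $\|\bm d\|_\infty\leq\|\bm L^\dagger\|_2=1/\lambda_2$ then bounds the correction by $4/\lambda_2$.
\item Taking $a=\med(\bm v)$ gives $Q_{ij}\leq 2\sp(\bm v)+4/\lambda_2$. Then \Cref{lem:voltage-spread} yields $(12+8\log n)/\lambda_2$.
\end{itemize}
In your level-set language, the cut-wise correction $-\bm 1_{U_t^+}^\top\bm L\bm d$ is not small. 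However, $\int_0^\infty\bm 1_{U_t^+}\,\d t=\bm f$ and $\|\bm L\bm f\|_1\leq 2$. So you must integrate over $t$ before estimating. The only layer-by-layer argument needed is the one already inside \Cref{lem:voltage-spread}.
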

\Cref{lm:tech} is a pathwise statement; its proof is based on the electrical-network interpretation in \Cref{sec:en} and is deferred to \Cref{sec:chain}.  
To complete the proof, substituting \eqref{key} into \eqref{pang1} yields the following upper bound: 
\begin{align}
|(\widetilde{u}_i - \widetilde{v}_i)-(\widetilde{u}_j - \widetilde{v}_j)|\lesssim\ e^{\kappa}\max_{e}\frac{[\bm W(\bm u^*)]_{ee}}{c_e}\max_{e}\frac{\rr_{\G_n^*}(e)}{\rr_{\G_n'}(e)}\log n\cdot \frac{12+8\log n}{\lambda_2(\bm L_{\G_n'})} = o(1),\label{hjd}
\end{align}
where the last step follows from the assumption $\lambda_2(\bm L_{\G^*_n})=\omega((\log n)^2)$ and the fact that the conductance profiles of $\G'_n$ and $\G^*_n$ are equivalent up to a factor depending only on $\kappa$ (see, e.g., \eqref{w-comp}), with the convention $0/0=1$. Combining \eqref{bdd1} and \eqref{hjd} and applying the triangle inequality,  
\begin{align}
\max_{i, j}|(\widetilde{u}_i-\widetilde{u}_j)-(u_i^*-u_j^*)| = o(1). \label{temp}
\end{align}
Consequently, for all large $n$, the dynamic range of $\uuu$ is bounded by $R$. Since $\nabla\lll(\uuu) = \nabla l(\uuu) = \nabla l(\uu) = \bm 0$, by the uniqueness of the MLE, $\uu = \uuu$. This establishes the uniform consistency of the MLE. We now apply \eqref{pang1} in an additional bootstrapping argument to sharpen the dependence on $\kappa$ and eliminate the dependence on the random graph $\G_n'$.

To this end, note that \eqref{temp} and \eqref{bdd1} together imply $\sup_{0\leq t\leq 1}\|\bm w_t-\bm u^*\|_\infty = o(1)$. Consequently, for all large $n$, 
\begin{equation}\label{comp-2}
\begin{aligned}
\max_{e}\frac{[\bm W(\bm u^*)]_{ee}}{c_e} &= \max_{e}\frac{\widetilde{\psi}''(\bm x_e^\top\bm u^*)}{\int_0^1\widetilde{\psi}''(\bm x_e^\top\bm w_t) \d t}\\
&\leq \max_e\frac{\widetilde{\psi}''(\bm x_e^\top\bm u^*)}{\min_{|z-\bm x_e^\top\bm u^*|\leq 1/2}\widetilde{\psi}''(z)}\\
&= \max_e\frac{\psi''(\bm x_e^\top\bm u^*)}{\min_{|z-\bm x_e^\top\bm u^*|\leq 1/2}\psi''(z)}\\
&\leq \sup_{x}\frac{\sigma(x)(1-\sigma(x))}{\sigma(x+1/2)(1-\sigma(x+1/2))}\\
&\leq 2,  
\end{aligned}
\end{equation}
where the penultimate step uses that $\psi''(x) = \sigma(x)(1-\sigma(x))$ and it is symmetric and decreasing on $\R_+$. Similarly, 
\begin{align*}
\min_{e}\frac{[\bm W(\bm u^*)]_{ee}}{c_e}\geq\frac{1}{2}.  
\end{align*}
According to \Cref{lm:rp}, 
\begin{align}
\max\left\{\max_{e}\frac{[\bm W(\bm u^*)]_{ee}}{c_e}, \ \max_{e}\frac{\rr_{\G_n^*}(e)}{\rr_{\G_n'}(e)}, \ \frac{\lambda_2(\bm L_{\G_n^*})}{\lambda_2(\bm L_{\G_n'})}\right\}\leq 2. \label{cpd}
\end{align}
Substituting this into \eqref{hjd} yields the following simplification: 
\begin{align}
|(\widetilde{u}_i - \widetilde{v}_i)-(\widetilde{u}_j - \widetilde{v}_j)|\lesssim \frac{e^\kappa (\log n)^2}{\lambda_2(\bm L_{\G_n^*})}.\label{bdd2}
\end{align}
Combining \eqref{bdd1} and \eqref{bdd2} together yields
\begin{align}
|(\widehat{u}_i - u_i^*)-(\widehat{u}_j - u_j^*)|=|(\widetilde{u}_i - u_i^*)-(\widetilde{u}_j - u_j^*)|\lesssim \sqrt{e^{\kappa}\rr_{\G^*_n}(i\leftrightarrow j)\log n} + \frac{e^{\kappa} (\log n)^2}{\lambda_2(\bm L_{\G^*_n})}. 
\end{align}
This gives the desired estimate in \eqref{main:bdd}. 


\begin{remark}\label{rem:classical}
The global remainder in \eqref{main:bdd} admits an alternative bound in terms of the pair-specific effective resistance multiplied by $n$. By the maximum principle (see~\eqref{mp}) and \eqref{cpd}, 
\begin{align*}
\left|\bm x_{k\ell}^\top\bm L_{\G_n'}^\dagger\bm x_{ij}\right|\leq \rr_{\G_n'}(i\leftrightarrow j)\leq 2\rr_{\G^*_n}(i\leftrightarrow j). 
\end{align*}
This combined with the first inequality in \eqref{diagp} allows us to bound the second equality in \eqref{pang1} as follows: 
\begin{align*}
|(\widetilde{u}_i - \widetilde{v}_i)-(\widetilde{u}_j - \widetilde{v}_j)|&\lesssim \rr_{\G^*_n}(i\leftrightarrow j)\sum_{1\leq k<\ell\leq n}[\bm W(\bm u^*)]_{(k,\ell), (k, \ell)}|\bm x_{k\ell}^\top(\vv - \bm u^*)|^2\\
&= \rr_{\G^*_n}(i\leftrightarrow j)\cdot (\vv - \bm u^*)^\top\bm L_{\G_n^*}(\vv - \bm u^*)\\
& =  \rr_{\G^*_n}(i\leftrightarrow j)\cdot \bm\eta^\top\bm P\bm\eta\qquad\text{(by \eqref{linear-error})},
\end{align*}
where $\bm\eta$ is defined in \eqref{eta} and $\bm P\coloneqq\bm W(\bm u^*)^{1/2}\bm Q^\top\bm L_{\G_n^*}^\dagger\bm Q\bm W(\bm u^*)^{1/2}$. For compared edges, the entries of $\bm\eta$ are independent, centered, have unit variance, and have sub-Gaussian norms bounded by $Ce^{\kappa/2}$. Moreover, $\bm P$ is an orthogonal projection of rank
$n-1$, so $\E[\bm\eta^\top\bm P\bm\eta]=n-1$, $\|\bm P\|_2=1$, and $\|\bm P\|_F^2=n-1$. 
The Hanson--Wright inequality \cite{vershynin2018high} therefore gives, with probability at least $1-n^{-3}$,
\begin{align*}
\bm\eta^\top\bm P\bm\eta\leq n-1+Ce^\kappa\bigl(\sqrt{n\log n}+\log n\bigr)\lesssim e^\kappa n.
\end{align*}
We include this event in the initial high-probability event; the original $n^{-3}$ failure bounds ensure that the total
failure probability remains at most $n^{-2}$ for all large $n$. Consequently, 
\begin{align*}
|(\widetilde{u}_i - \widetilde{v}_i)-(\widetilde{u}_j - \widetilde{v}_j)|\lesssim e^\kappa n\rr_{\G^*_n}(i\leftrightarrow j). 
\end{align*}
\end{remark}


\subsection{Proof of \Cref{lm:tech}}\label{sec:chain}
To begin with, note that $c_{k\ell}\rr_{\G'_n}(k\leftrightarrow\ell)$ is the edge leverage score of $\G_n'$. Let $\bm v^{i\to j}\coloneqq\bm L_{\G_n'}^\dagger\bm x_{ij}$ denote the normalized voltage distribution given by injecting and extracting one unit current from $i$ and $j$ in $\G_n'$. For any $a\in\R$,  
\begin{align*}
Q_{ij} &= \sum_{1\leq k<\ell\leq n}\left|v^{i\to j}_k-v^{i\to j}_\ell\right|(c_{k\ell}\rr_{\G'_n}(k\leftrightarrow\ell))\\
&\leq \sum_{1\leq k<\ell\leq n}\left(\left|v^{i\to j}_k-a\right|+\left|v^{i\to j}_\ell-a\right|\right)(c_{k\ell}\rr_{\G'_n}(k\leftrightarrow\ell))\\
& = \sum_{k=1}^n \left|v^{i\to j}_k-a\right|\cdot\left(\sum_{\ell\neq k}c_{k\ell}\rr_{\G'_n}(k\leftrightarrow\ell)\right)\\
& = \sum_{k=1}^n \left|v^{i\to j}_k-a\right|\ell_{\G'_n}(k)\qquad\text{(by \eqref{lev})}\\
& = 2\left(1-\frac{1}{n}\right)\sum_{k=1}^n \left|v^{i\to j}_k-a\right| - \sum_{k=1}^n \left|v^{i\to j}_k-a\right|\bm e_k^\top\bm L_{\G_n'}\diag\mathrm{vec}(\bm L_{\G_n'}^{\dagger})\qquad\text{(by \eqref{lev1})}\\
& = 2\left(1-\frac{1}{n}\right)\sum_{k=1}^n \left|v^{i\to j}_k-a\right| - |\bm v^{i\to j}-a\bm 1|^\top\bm L_{\G_n'}\diag\mathrm{vec}(\bm L_{\G_n'}^{\dagger})\\
& = 2\left(1-\frac{1}{n}\right)\sum_{k=1}^n \left|v^{i\to j}_k-a\right| - \diag\mathrm{vec}(\bm L_{\G_n'}^{\dagger})^\top(\bm L_{\G_n'}|\bm v^{i\to j}-a\bm 1|),
\end{align*}
where $|\bm v^{i\to j}-a\bm 1|$ is the vector obtained by taking absolute value of $\bm v^{i\to j}-a\bm 1$ componentwise. 
Note that without the (componentwise) absolute value, $\bm L_{\G_n'}(\bm v^{i\to j}-a\bm 1) = \bm L_{\G_n'}\bm v^{i\to j} = \bm x_{ij}$, which has all components zero except at $i$ and $j$. Using this observation, we can bound $\bm L_{\G_n'}|\bm v^{i\to j}-a\bm 1|$ for the $k$th component as 
\begin{equation}\label{f1}
\begin{aligned}
\bm e_k^\top\bm L_{\G_n'}|\bm v^{i\to j}-a\bm 1| &= \sum_{\ell\neq k}c_{k\ell}(|v^{i\to j}_k - a| - |v^{i\to j}_\ell - a|)\\
&\leq \sum_{\ell\neq k}c_{k\ell}\sgn(v^{i\to j}_k - a)[(v^{i\to j}_k - a) - (v^{i\to j}_\ell - a)]\\
&= \sgn(v^{i\to j}_k - a)\bm e_k^\top\bm L_{\G_n'}(\bm v^{i\to j}-a\bm 1)\\
&=\begin{cases}
0 & k\notin\{i, j\}\\
\sgn(v^{i\to j}_i - a)\leq 1& k=i\\
-\sgn(v^{i\to j}_j - a)\leq 1& k = j
\end{cases}  
\end{aligned}
\end{equation}
Meanwhile, 
\begin{equation}\label{f2}
\begin{aligned}
0&=(\bm 1^\top\bm L_{\G_n'})|\bm v^{i\to j}-a\bm 1| \\
&= \bm 1^\top(\bm L_{\G_n'}|\bm v^{i\to j}-a\bm 1|)\\
& = \bm e_i^\top(\bm L_{\G_n'}|\bm v^{i\to j}-a\bm 1|) + \bm e_j^\top(\bm L_{\G_n'}|\bm v^{i\to j}-a\bm 1|) - \sum_{\ell\notin\{i, j\}}\left|\bm e_\ell^\top(\bm L_{\G_n'}|\bm v^{i\to j}-a\bm 1|)\right|.  
\end{aligned}
\end{equation}
Combining \eqref{f1} and \eqref{f2} gives 
\begin{align}
\left\|\bm L_{\G_n'}|\bm v^{i\to j}-a\bm 1|\right\|_1 \leq 4. \label{e1}
\end{align}
Meanwhile, 
\begin{align}
\|\diag\mathrm{vec}(\bm L_{\G_n'}^{\dagger})^\top\|_\infty\leq \|\bm L_{\G_n'}^{\dagger}\|_2 = \frac{1}{\lambda_2(\bm L_{\G_n'})}. \label{e2}
\end{align}
Substituting \eqref{e1} and \eqref{e2} into the estimates on $Q_{ij}$ and using H\"older's inequality, we have 
\begin{align}
Q_{ij}\leq 2\left(1-\frac{1}{n}\right)\sum_{k=1}^n \left|v^{i\to j}_k-a\right| + \frac{4}{\lambda_2(\bm L_{\G_n'})}. 
\end{align}
Choosing $a$ as any median of $\bm v^{i\to j}$ and dropping the $(1-1/n)$ factor gives 
\begin{align}
Q_{ij}\leq 2\sp(\bm v^{i\to j}) + \frac{4}{\lambda_2(\bm L_{\G_n'})},
\end{align}
where $\sp$ is the $\ell_1$-total deviation defined in \eqref{mysv}. Applying \Cref{lem:voltage-spread} to further bound $2\sp(\bm v^{i\to j})$ gives the desired bound.

\subsection{Proof of \Cref{prop:er}}\label{sec:prop}

We first explain the intuition behind the bound assuming $\lambda_2(\bm L_{\G_n})=\omega(1)$ (which is not needed later). Recall that $\bm v^{i\to j}=\bm L_{\G_n}^{\dagger}\bm x_{ij}$ is the voltage profile associated with injecting one unit of current at $i$ and extracting one unit at $j$, and $\rr_{\G_n}(i\leftrightarrow j)=\bm x_{ij}^\top\bm L_{\G_n}^{\dagger}\bm x_{ij}=(\bm v^{i\to j})^\top\bm L_{\G_n}\bm v^{i\to j}$. Since $m_{ij}\in\{0,1\}$ under the assumption $m_{ij}\leq 1$, $\G_n$ is an unweighted simple graph. When $\lambda_2(\bm L_{\G_n})=\omega(1)$, the graph is well connected, and one expects the unit current injected at $i$ to spread approximately evenly among the $d_i$ incident edges, so that each carries roughly $1/d_i$ units of current. Similarly, the current entering $j$ is approximately evenly distributed among its $d_j$ incident edges. If the voltage profile on the intermediate vertices is nearly flat, this suggests $v_i^{i\to j}\approx 1/d_i$ and $v_j^{i\to j}\approx-1/d_j$ and $v_k^{i\to j}\approx 0$. This leads us to consider the approximate (uncentered) voltage profile for $\bm v^{i\to j}$: 
\begin{align}
\bm v = \frac{1}{d_i}\bm e_i - \frac{1}{d_j}\bm e_j. 
\end{align}
In particular, its corresponding energy is 
\begin{align}
\bm v^\top\bm L_{\G_n}\bm v = \frac{1}{d_i}+\frac{1}{d_j} + \frac{2m_{ij}}{d_id_j},
\end{align}
which matches the leading-order term in both bounds in \eqref{locr} up to a correction.  

To establish the general result in rigor, we consider a perturbation between $\bm v^{i\to j}$ and $\bm v$. Denote the adjacency and diagonal degree matrices of $\G_n$ by $\bm A_{\G_n}$ and $\bm D_{\G_n}$, respectively. In particular, $\bm L_{\G_n} = \bm D_{\G_n} - \bm A_{\G_n}$. 
By definition, 
\begin{align*}
\bm v^{i\to j} = \bm L_{\G_n}^\dagger \bm x_{ij} = \bm L_{\G_n}^\dagger \bm D_{\G_n}\bm v = \bm y + \bm L_{\G_n}^\dagger\bm A_{\G_n}\bm v, 
\end{align*}
where $\bm y\coloneqq\bm v-\frac{1}{n}\bm 1\bm 1^\top\bm v$ is the centered $\bm v$. 
It follows from the direct computation that
\begin{equation}\label{hrj}
\begin{aligned}
\rr_{\G_n}(i\leftrightarrow j) &= (\bm v^{i\to j})^\top\bm L_{\G_n}\bm v^{i\to j}\\
&=(\bm y + \bm L_{\G_n}^\dagger\bm A_{\G_n}\bm v)^\top\bm L_{\G_n}(\bm y + \bm L_{\G_n}^\dagger\bm A_{\G_n}\bm v)\\
&=(\bm v + \bm L_{\G_n}^\dagger\bm A_{\G_n}\bm v)^\top\bm L_{\G_n}(\bm v + \bm L_{\G_n}^\dagger\bm A_{\G_n}\bm v)\qquad\text{(since $\ker(\bm L_{\G_n}) = \text{span}\{\bm 1\}$)}\\
&=\bm v^\top\bm L_{\G_n}\bm v + 2\bm v^\top\bm A_{\G_n}\bm v + (\bm A_{\G_n}\bm v)^\top\bm L_{\G_n}^\dagger(\bm A_{\G_n}\bm v)\\
&= \frac{1}{d_i}+\frac{1}{d_j} - \frac{2m_{ij}}{d_id_j} + (\bm A_{\G_n}\bm v)^\top\bm L_{\G_n}^\dagger(\bm A_{\G_n}\bm v)\\
&\leq \frac{1}{d_i}+\frac{1}{d_j} - \frac{2m_{ij}}{d_id_j}+ \frac{\|\bm A_{\G_n}\bm v\|_2^2}{\lambda_2(\bm L_{\G_n})}. 
\end{aligned}
\end{equation}
To proceed further, note that 
\begin{align*}
\|\bm A_{\G_n}\bm v\|_2^2 = \sum_{k=1}^n\left(\frac{m_{ki}}{d_i}-\frac{m_{kj}}{d_j}\right)^2\leq \sum_{k=1}^n\left[\left(\frac{m_{ki}}{d_i}\right)^2+\left(\frac{m_{kj}}{d_j}\right)^2\right] = \frac{1}{d_i}+\frac{1}{d_j}. 
\end{align*}
Substituting this into \eqref{hrj} yields the desired upper bound. The lower bound follows by dropping the nonnegative quadratic term in the last equality in \eqref{hrj}.

\subsection{Proof of \Cref{thm:norm}}\label{sec:an}

The proof of asymptotic normality builds on the proof in \Cref{sec:main}. For convenience, we adopt the same notation used there. Recall that $\uu$ and $\uuu$ are the MLE and surrogate MLE, respectively. Since condition \eqref{eq:an-pair-condition} implies \eqref{unib} in \Cref{thm:main} (see Remark~\ref{rem:1}), the proof in \Cref{sec:main} shows that $\uu=\uuu$ with probability tending to one. Therefore, it suffices to establish the asymptotic normality result for $\uuu$. From now on, we fix the pair $(i,j)=(i_n,j_n)$ and drop the subscript $n$ to reduce notational clutter.

Consider the $\sqrt{\rr_{\G_n^*}(i\leftrightarrow j)}$-normalized decomposition of the error of the surrogate MLE for the pair $(i,j)$ via the linearization $\vv$:
\begin{align*}
\frac{\widetilde{u}_i-u_i^*-(\widetilde{u}_j-u_j^*)}
{\sqrt{\rr_{\G_n^*}(i\leftrightarrow j)}}
&=
\frac{\widetilde{v}_i-u_i^*-(\widetilde{v}_j-u_j^*)}
{\sqrt{\rr_{\G_n^*}(i\leftrightarrow j)}}+
\frac{(\widetilde{u}_i-\widetilde{v}_i)
-(\widetilde{u}_j-\widetilde{v}_j)}
{\sqrt{\rr_{\G_n^*}(i\leftrightarrow j)}}.
\end{align*}
The two terms on the right-hand side correspond to the linearized error and the nonlinear remainder, whose magnitudes are bounded in \eqref{bdd1} and \eqref{bdd2}, respectively, with probability tending to one. In fact, combining \eqref{bdd2} and \eqref{eq:an-pair-condition} yields
\begin{align*}
\frac{
\left|
(\widetilde{u}_i-\widetilde{v}_i)
-(\widetilde{u}_j-\widetilde{v}_j)
\right|
}
{\sqrt{\rr_{\G_n^*}(i\leftrightarrow j)}}
&\lesssim
\frac{e^\kappa(\log n)^2}
{\lambda_2(\bm L_{\G_n^*})
\sqrt{\rr_{\G_n^*}(i\leftrightarrow j)}}=o(1).
\end{align*}
This shows that the second term is $o_{\P}(1)$.

As a result, it suffices to show that the first term converges in distribution to a standard normal random variable. To this end, recall from \eqref{linear-error} that
\begin{align}
\widetilde{v}_i-u_i^*-(\widetilde{v}_j-u_j^*)=\bm x_{ij}^\top\bm L_{\G_n^*}^\dagger\bm Q\bm\xi,
\end{align}
which is a sum of independent mean-zero random variables according to the definition of $\bm\xi$. Moreover, its variance is given by
\begin{align*}
&\bm x_{ij}^\top\bm L_{\G_n^*}^\dagger\bm Q\E[\bm\xi\bm\xi^\top]\bm Q^\top\bm L_{\G_n^*}^\dagger\bm x_{ij}=\bm x_{ij}^\top\bm L_{\G_n^*}^\dagger\bm Q\bm W(\bm u^*)\bm Q^\top\bm L_{\G_n^*}^\dagger\bm x_{ij}=\bm x_{ij}^\top\bm L_{\G_n^*}^\dagger\bm x_{ij}=\rr_{\G_n^*}(i\leftrightarrow j).
\end{align*}

To establish asymptotic normality, we apply the Lindeberg central limit theorem. For the $e=(i',j')$ component of $\bm\xi$, where $i'<j'$ and $m_e>0$, write $\xi_e=\sum_{\ell=1}^{m_e}\xi_{e,\ell}$, where the $\xi_{e,\ell}$ are independent centered Bernoulli random variables corresponding to the $\ell$th comparison on $e$. Thus,
\begin{align*}
\bm x_{ij}^\top\bm L_{\G_n^*}^\dagger\bm Q\bm\xi=\sum_{e:m_e>0}\sum_{\ell=1}^{m_e}\left(\bm x_{ij}^\top\bm L_{\G_n^*}^\dagger\bm x_e\right)\xi_{e,\ell}.
\end{align*}
Since $|\xi_{e,\ell}|\leq 1$, the maximum absolute magnitude of the normalized summands is bounded by
\begin{align*}
\max_{e:m_e>0}\max_{\ell\leq m_e}\frac{\left|\bm x_{ij}^\top\bm L_{\G_n^*}^\dagger\bm x_e\right||\xi_{e,\ell}|}
{\sqrt{\rr_{\G_n^*}(i\leftrightarrow j)}}\leq\max_{e:m_e>0}\frac{\left|\bm x_{ij}^\top\bm L_{\G_n^*}^\dagger\bm x_e\right|}
{\sqrt{\rr_{\G_n^*}(i\leftrightarrow j)}}\leq\max_{e:m_e>0}\sqrt{\rr_{\G_n^*}(e)}\leq\sqrt{\frac{2}{\lambda_2(\bm L_{\G_n^*})}}=o(1),
\end{align*}
where the second inequality follows from the Cauchy--Schwarz inequality and the third uses \eqref{transfer}. Therefore, for every fixed $\varepsilon>0$, the absolute magnitude of every normalized summand is eventually smaller than $\varepsilon$. This verifies the Lindeberg condition. Consequently,
\begin{align*}
\frac{\widetilde{v}_i-u_i^*-(\widetilde{v}_j-u_j^*)}{\sqrt{\rr_{\G_n^*}(i\leftrightarrow j)}}\Rightarrow \mathcal N(0,1).
\end{align*}
Combining this with the $o_{\P}(1)$ bound for the nonlinear remainder and recalling that $\uu=\uuu$ with probability tending to one proves \eqref{eq:an-oracle-clt}.

To further show that \eqref{eq:an-oracle-clt} holds for the plug-in estimator of $\rr_{\G_n^*}(i\leftrightarrow j)$, denote the plug-in analogue of $\G_n^*$ as $\widehat{\G}_n$, which has edge weights $\{m_{ij}\psi''(\bm x_{ij}^\top\uu)\}_{1\leq i<j\leq n}$.   
The uniform consistency of the MLE implies $\Delta_n = \max_{i, j}|(\widehat{u}_i-\widehat{u}_j)-(u_i^*-u_j^*)|=o_{\P}(1)$. It follows from a similar computation as the last two steps in \eqref{comp-2} that, for all large $n$, uniformly over all edges with $m_e>0$, 
\begin{align*}
1-2\Delta_n\leq\frac{m_e\psi''(\bm x_e^\top\uu)}{m_e\psi''(\bm x_e^\top\bm u^*)}\leq 1+2\Delta_n.
\end{align*}
Applying \Cref{lm:rp} to $\G^*_n$ and $\widehat{\G}_n$ gives
\begin{align}
\max_{1\leq i<j\leq n}\left|\frac{\rr_{\widehat{\G}_n}(i\leftrightarrow j)}{\rr_{\G_n^*}(i\leftrightarrow j)}-1\right|
\leq \frac{2\Delta_n}{1-2\Delta_n}=o_{\P}(1).
\end{align}
The proof is completed by invoking Slutsky's theorem.

\subsection{Proof of \Cref{thm:ger}}\label{sec:ger}
It suffices to estimate the effective resistances and the spectral gap of $\G_n$. We first obtain a lower bound on the latter using Rayleigh's monotonicity principle. Consider the natural coupling obtained by assigning to each ordered pair $(i, j)$ an independent random variable $U_{ij}^{(n)}\sim\operatorname{Unif}[0,1]$. Define
\begin{align*}
B_{ij}^{(n)}=\mathbb I\{U_{ij}^{(n)}\le p_{\min}^{(n)}\}\qquad \leq \qquad A_{ij}^{(n)}= \mathbb I\{U_{ij}^{(n)}\le p_{ij}^{(n)}\}.
\end{align*}
The graph $\HH_n$ with edges $B_{ij}^{(n)}$ is a standard Erd\H{o}s--R\'enyi model $G(n, p_{\min}^{(n)})$. Since $m_{ij}^{(n)}\geq B_{ij}^{(n)}$, \Cref{lm:rp} yields $\lambda_2(\bm L_{\G_n})\geq\lambda_2(\bm L_{\HH_n})$. Since $np_{\min}^{(n)}/\log n\to\infty$, standard spectral-gap estimates for Erd\H{o}s--R\'enyi random graphs give $\lambda_2(\bm L_{\HH_n})\geq np_{\min}^{(n)}/2$ for all large $n$ a.s.; see, e.g., \cite[Lemma 10]{MR3953449}. The assumption $np_{\min}^{(n)}=\omega((\log n)^2)$ thus implies $\lambda_2(\bm L_{\G_n})\geq np_{\min}^{(n)}/2=\omega((\log n)^2)$. 

We next estimate the effective resistances. Under the condition $np^{(n)}_{\min}/\log n\to\infty$, a standard concentration argument based on the Chernoff bound and a union bound shows that with probability tending to one, simultaneously for all $i\in [n]$, the degree of $i$ is close to its mean, i.e., 
\begin{align*}
\frac{1}{2}\bar{d}_i^{(n)}\leq d_i^{(n)}\leq 2\bar{d}_i^{(n)}. 
\end{align*}
This combined with the lower bound on $\lambda_2(\bm L_{\G_n})$ and \Cref{prop:er} shows that, for all large $n$, simultaneously for all $i\neq j$, 
\begin{align*}
\frac{1}{4\min\{\bar{d}_i^{(n)}, \bar{d}_j^{(n)}\}}\leq\rr_{\G_n}(i\leftrightarrow j)\leq \frac{4}{\min\{\bar{d}_i^{(n)}, \bar{d}_j^{(n)}\}}. 
\end{align*}
Further assuming $(np_{\min}^{(n)})^2/\bar{d}^{(n)}_{\max}=\omega((\log n)^4)$, 
\begin{align*}
\min_{1\leq i< j\leq n}\lambda_2^2(\bm L_{\G_n})\rr_{\G_n}(i\leftrightarrow j)\gtrsim \frac{(np_{\min}^{(n)})^2}{\bar{d}^{(n)}_{\max}}=\omega((\log n)^4).
\end{align*}
Under Assumption~\ref{ass:bdd}, the effective resistance and spectral gap estimates for $\G_n$ can be transferred to $\G_n^*$ using \eqref{yh} and \Cref{lm:rp}. The results in \eqref{random-bdd} and \eqref{random-an} thus follow by applying \Cref{thm:main} and \Cref{thm:norm}.

\subsection{Proof of \Cref{thm:bipartite}}\label{sec:k}

Let $\mathcal K_{2,n-2}$ denote the corresponding unweighted graph, whose spectral gap $\lambda_2(\bm L_{\mathcal K_{2,n-2}})=2$; see, e.g., \cite{brouwer2012spectra}. Therefore, $\lambda_2(\bm L_{\G_n})=L_n\lambda_2(\bm L_{\mathcal K_{2,n-2}})=2L_n=\omega\bigl((\log n)^2\bigr)$.
The degrees in $\mathcal K_{2,n-2}$ are $d_1=d_2=n-2$ and $d_i=2$ otherwise. Applying \Cref{prop:er} to $\mathcal K_{2,n-2}$ and using the resistance scaling $\rr_{\G_n}(i\leftrightarrow j)=\rr_{K_{2, n-2}}(i\leftrightarrow j)/L_n$ gives
\begin{align}\label{329}
\rr_{\G_n}(1\leftrightarrow2)
\lesssim \frac{1}{(n-2)L_n},
\qquad
\max_{\{i,j\}\neq\{1,2\}}
\rr_{\G_n}(i\leftrightarrow j)
\lesssim \frac{1}{L_n}.
\end{align}
Under Assumption~\ref{ass:bdd}, the effective resistance and spectral gap estimates for $\G_n$ can be transferred to $\G_n^*$ using \eqref{yh} and \Cref{lm:rp}. 
The upper bound in \eqref{bipartite:bdd} follows by invoking \eqref{main:bdd} in \Cref{thm:main}.

For the special parameter choice, write $N=n-2$ and $L=L_n$, and recall the surrogate MLE $\uuu$ and the linearization $\vv$ from \Cref{sec:main}. Here every edge of $\G_n^*$ has weight $2L/9$, so the symmetry gives
\[
\lambda_2(\bm L_{\G_n^*})=\frac{4L}{9},
\qquad
\rr_{\G_n^*}(1\leftrightarrow2)=\frac{9}{NL}.
\]
The Lindeberg argument in \Cref{sec:an} therefore gives
\[
Z_n\coloneqq\frac{\sqrt{NL}}{3}
\bigl[(\widetilde v_1-\widetilde v_2)-(u_1^*-u_2^*)\bigr]
\Rightarrow\mathcal N(0,1).
\]
Indeed, that argument for the linearized term requires only a diverging spectral gap; the balancing condition is used there only
to control the nonlinear remainder. Here we do not need the balancing condition and explicitly compute this remainder leveraging the special structure of the graph.

To analyze $\uuu-\vv$, start from the second equality in \eqref{pang1}. Write $d_{rk}=\widetilde v_r-\widetilde v_k-(u_r^*-u_k^*)$ and $\Lambda_{rk}=[\bm\Lambda]_{(r,k),(r,k)}$ for $r\in\{1,2\}$ and $3\leq k\leq n$. Then
\begin{align*}
(\widetilde u_1-\widetilde v_1)
-(\widetilde u_2-\widetilde v_2)
=\sum_{k=3}^n\sum_{r=1}^2
\bigl(\bm x_{12}^\top\bm L_{\G_n'}^\dagger\bm x_{rk}\bigr)
\Lambda_{rk}d_{rk}.
\end{align*}
The definition of $\bm\Lambda$ and Taylor expansion give
\[
\Lambda_{rk}d_{rk}
=\left(L\int_0^1\!\bigl[\sigma'(u_r^*)
-\sigma'(u_r^*+t d_{rk})\bigr]\d t\right)d_{rk}
=-\frac L2\sigma''(u_r^*)d_{rk}^2
+o_{\P}(L d_{rk}^2),
\]
uniformly over the edges, since $\max_{r,k}|d_{rk}|=o_{\P}(1)$
by \eqref{bdd1}.

By the uniform consistency of both $\uuu$ and $\vv$, the conductances of $\G_n'$ satisfy
$c_{rk}=(1+o_{\P}(1))\,2L/9$ uniformly. Under unit current from vertex $1$ to vertex $2$, the graph consists of $N$ parallel paths $1\to k\to2$. Each path has resistance
$c_{1k}^{-1}+c_{2k}^{-1}=(1+o_{\P}(1))\,9/L$, so each carries current $(1+o_{\P}(1))/N$, uniformly in $k$. Thus, writing $\bm v^{1\to 2}=\bm L_{\G_n'}^\dagger\bm x_{12}$, Ohm's law gives
\[
\bm x_{12}^\top\bm L_{\G_n'}^\dagger\bm x_{rk}
=v^{1\to 2}_r-v^{1\to 2}_k
=(-1)^{r-1}(1+o_{\P}(1))\frac9{2NL},
\qquad r=1,2,
\]
uniformly in $k$. Since $\sigma''(u_1^*)=-2/27$ and
$\sigma''(u_2^*)=2/27$, substitution into the correction identity yields
\[
(\widetilde u_1-\widetilde v_1)
-(\widetilde u_2-\widetilde v_2)
=\frac1{6N}\sum_{k=3}^n\sum_{r=1}^2 d_{rk}^2
+o_{\P}\!\left(\frac1N\sum_{k=3}^n\sum_{r=1}^2 d_{rk}^2\right).
\]

To evaluate the quadratic term, set $S_k=\xi_{1k}+\xi_{2k}$, with $\bm\xi$ as in \eqref{myxi}. The equations in \eqref{linear-error} at vertices $k\ge3$, together with the linearized fluctuation computed above, give
\[
d_{1k}+d_{2k}=\frac{9S_k}{2L},
\qquad
d_{1k}-d_{2k}=\frac{3Z_n}{\sqrt{NL}}.
\]
Using $a^2+b^2=[(a+b)^2+(a-b)^2]/2$, we obtain
\[
\frac1N\sum_{k=3}^n\sum_{r=1}^2 d_{rk}^2
=\frac{9Z_n^2}{2NL}
+\frac{81}{8NL^2}\sum_{k=3}^n S_k^2
=\frac9{2L}+O_{\P}\!\left(\frac1{L\sqrt N}\right).
\]
The last equality follows from $Z_n=O_{\P}(1)$ and the independence of the $S_k$, together with
$\E[S_k^2]=4L/9$ and $\E[S_k^4]=O(L^2)$. Consequently,
\[
(\widetilde u_1-\widetilde v_1)
-(\widetilde u_2-\widetilde v_2)
=\frac3{4L}+o_{\P}(L^{-1}).
\]
Combining this with the linearized fluctuation and using
$\P(\uu=\uuu)\to1$ proves \eqref{bipartite:expansion}.

\section{Numerical simulation}\label{sec:num}

In this section, we conduct numerical simulations to verify our theoretical findings. We consider two synthetic comparison designs, including the generalized Erd\H{o}s--R\'enyi graphs in \Cref{thm:ger} and the imbalanced complete bipartite graphs in \Cref{thm:bipartite}, as well as a real-world comparison network of ATP tennis players. We focus on the uniform consistency of the MLE, the tightness of the bound in \eqref{main:bdd}, and the asymptotic normality for a prescribed sequence of pairs. For each design below, we keep the comparison graph and the true utilities fixed and average over 1,000 independent trials simulated under the BT model.

\subsection{Generalized Erd\H{o}s--R\'enyi graphs}

We first generate Erd\H{o}s--R\'enyi graphs with $p_{ij}^{(n)}=0.75 n^{-1/2}$ and at most one comparison per edge. The utilities are drawn independently from $\operatorname{Unif}(-1.5,1.5)$ and then centered. For $n\in\{2^{11},\ldots,2^{14}\}$, the left panel of \Cref{fig:er-homogeneous} shows that the average of the maximum uniform pairwise error $\Delta_n$ steadily decreases from $4.08$ to $2.31$. This trend is consistent with the uniform consistency result in \Cref{thm:ger}. At each $n$, we sample 500 item pairs and plot their mean absolute errors against $\sqrt{\rr_{\G^*_n}(i\leftrightarrow j)}$. The increasing trend illustrates the linear dependence of the pairwise error on the square root of the pair-specific effective resistance, which is in agreement with the bound in \eqref{random-bdd} (as the linearization term dominates).

To assess the normal approximation, for the 500 selected pairs at each $n$, we construct the confidence intervals
\begin{equation}\label{confidence_interval}
\left[
(\widehat u_i-\widehat u_j)-1.96\sqrt{\rr_{\widehat{\G}_n}(i\leftrightarrow j)},
\,
(\widehat u_i-\widehat u_j)+1.96\sqrt{\rr_{\widehat{\G}_n}(i\leftrightarrow j)}
\right],
\end{equation}
where $\widehat{\G}_n$ denotes the plug-in estimate of $\G_n^*$ obtained using the MLE. The average coverage probability is close to the nominal level of $95\%$. This coincides with our theoretical prediction based on \Cref{thm:norm,thm:ger}.

\begin{figure}[hbt!]
\centering
\includegraphics[width=\textwidth]{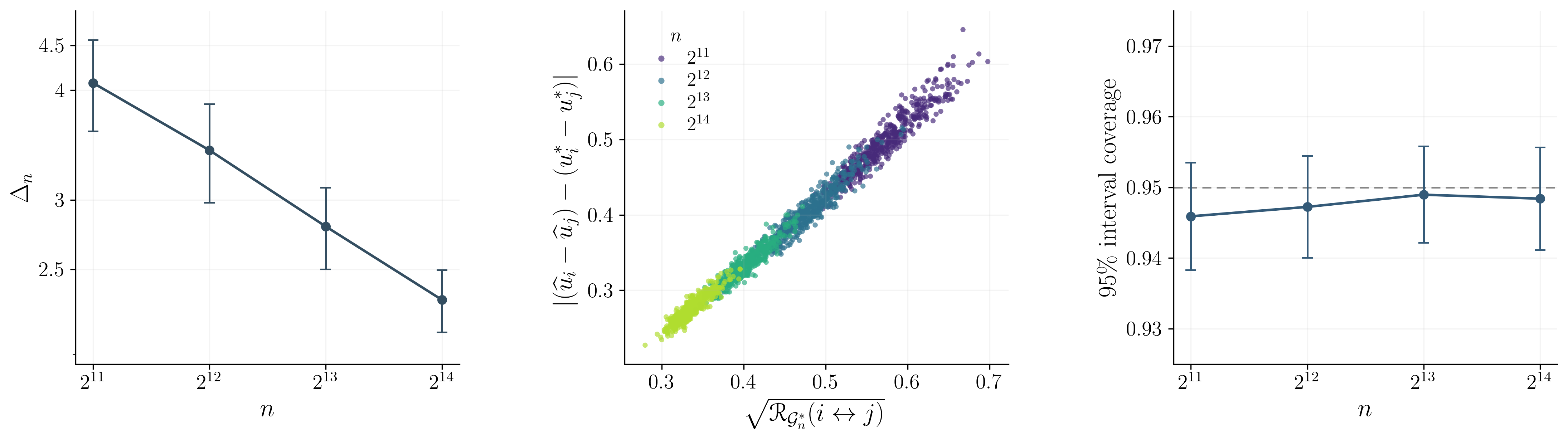}
\caption{Left: $\Delta_n$ under different choices of $n$ with one-standard-deviation whiskers. Middle: Pairwise error versus square-root effective resistance at different $n$. Right: Coverage probabilities for pairwise differences, with whiskers across pairs. The grey dashed line is the normal benchmark for nominal coverage.}\label{fig:er-homogeneous}
\end{figure}

To examine how heterogeneity in the comparison graph influences asymptotic normality, we modify the graph-generating process by creating a small set of locally dense vertices. Fix $n=2^{13}$ and let $\mathcal{V}_0$ be a fixed set of 20 vertices. We repeat the simulation over modified graphs with $p_{ij}^{(n)}=p_{\mathcal{V}_0}$ for all $i\in\mathcal{V}_0$ and $j\in[n]$, while leaving all other edge probabilities unchanged. The modified edge probability $p_{\mathcal{V}_0}$ takes values in $\{0.75 n^{-1/2}, 0.04, 0.08, 0.16, 0.32, 0.8\}$. The results are shown in Figure \ref{fig:er-heterogeneous}. As $p_{\mathcal{V}_0}$ increases from $0.75 n^{-1/2}$ to $0.8$, the mean absolute pairwise error within $\mathcal{V}_0$ decreases from $0.34$ to $0.039$, while the average coverage rate falls substantially. In contrast, coverage for pairs outside $\mathcal{V}_0$ remains close to $0.95$. This suggests that substantially increasing graph heterogeneity through higher local comparison rates does not impair consistency for vertices in $\mathcal{V}_0$, but can make the normal approximation less accurate. This observation also illustrates the role of the balancing condition in \Cref{thm:ger}~(ii) in the absence of a further debiasing procedure.

\begin{figure}[hbt!]
\centering
\includegraphics[width=\textwidth]{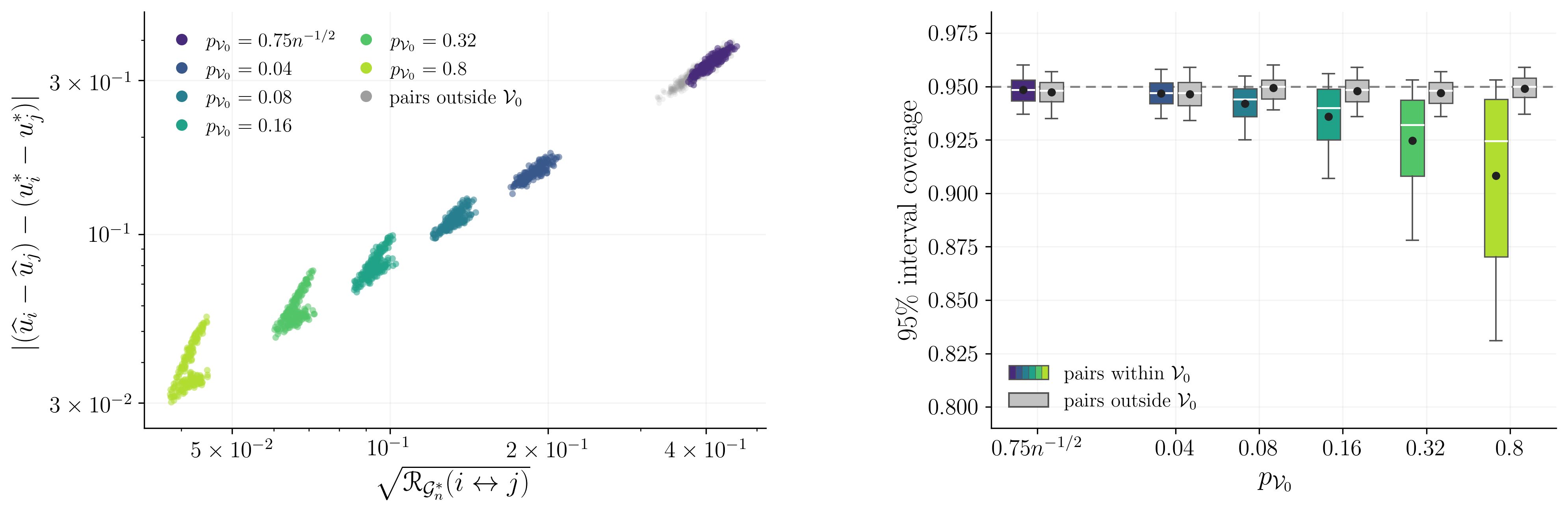}
\caption{Left: Mean absolute errors in estimated pairwise utility differences for pairs within $\mathcal{V}_0$ at $n=2^{13}$, plotted against square-root effective resistance. Right: Coverage probabilities for pairwise utility differences within and outside $\mathcal{V}_0$ under different edge-forming probabilities.}\label{fig:er-heterogeneous}
\end{figure}

\subsection{Imbalanced complete bipartite graphs}
\Cref{thm:bipartite} shows that the error bound in \eqref{main:bdd} is tight for certain graph sequences. In this experiment, we numerically verify this theoretical prediction. Let $\G_n=L\mathcal K_{2,n-2}$, with $u_1^*=-u_2^*=\log 2$ and $u_k^*=0$ for $k\geq 3$. For the pair $\{1,2\}$, we have $\rr_{\G_n}(1\leftrightarrow 2)=2/((n-2)L)$ and $\lambda_2(\bm L_{\G_n})=2L$. We first fix $n$ and vary $L$ to examine how the linearization error and the nonlinear remainder jointly determine the pairwise estimation error between the objects $1$ and $2$. We then further vary $n$ under prescribed choices of $L$ to test the predicted rates along different graph sequences.

Let $\Delta_{12}=|(\widehat u_1-\widehat u_2)-(u_1^*-u_2^*)|$. The expansion \eqref{bipartite:expansion} suggests the approximation of this error as
\begin{equation}\label{exact_error} m_{n,L}:=\mathbb E\left[\left|
\frac{3}{\sqrt{(n-2)L}}Z+\frac{3}{4L}
\right|\right],
\qquad Z\sim \mathcal N(0,1).
\end{equation}

We first fix $n=2^{11}$ and vary $L$. For the pair $\{1,2\}$, the expected absolute magnitudes of the linearization term and the nonlinear remainder term in \eqref{exact_error} are, respectively,
\begin{align*}
c_1\sqrt{\rr_{\G_n}(1\leftrightarrow 2)}
\qquad\text{and}\qquad
\frac{c_2}{\lambda_2(\bm L_{\G_n})},
\end{align*}
where $c_1=3/\sqrt{\pi}$ and $c_2=3/2$. In particular, their contributions decay at rates $(nL)^{-1/2}$ and $L^{-1}$, respectively. They are equal at the crossover point
\begin{align*}
L_c=\frac{\pi(n-2)}{32}\approx 201.
\end{align*}
The left panel of \Cref{fig:two-item-crossover} shows that the pairwise error $\Delta_{12}$ closely follows the predicted error scale $m_{n,L}$ across this crossover. When $L\ll L_c$, the nonlinear remainder dominates, and the error decays at the spectral-gap rate $L^{-1}$. When $L\gg L_c$, the linearization term dominates, and the error decays at the effective-resistance rate $(nL)^{-1/2}$. The right panel shows the same crossover for different values of $n$ after rescaling $L$ by $L_c$ and the error by $c_1\sqrt{\rr_{\G_n}(1\leftrightarrow 2)}$. The resulting agreement across the two regimes illustrates the sharpness of the error bound for this design and shows that both the effective-resistance and spectral-gap terms are needed to capture the estimation error.

\begin{figure}[hbt!]
\centering
\includegraphics[width=.98\textwidth]{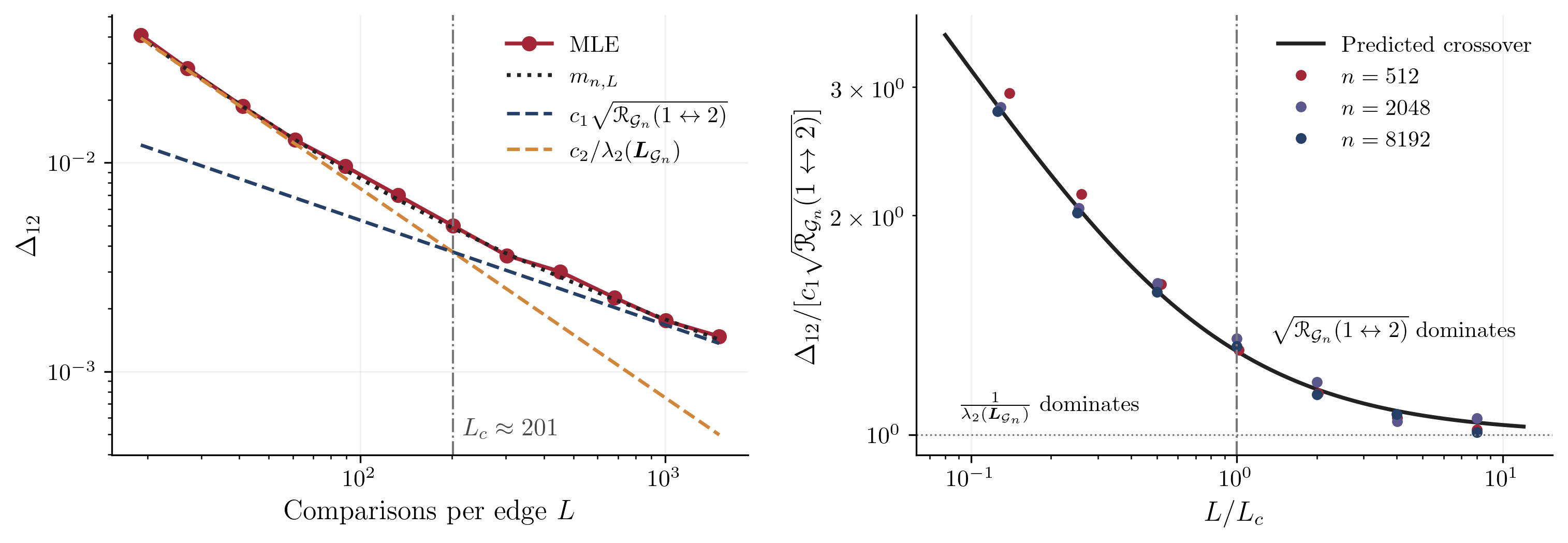}
\caption{Mean absolute error for the pair $\{1,2\}$ as the number of comparisons per edge varies.
Left: Averaged $\Delta_{12}$ versus $L$ at fixed $n=2^{11}$. The dashed curves show $c_1\sqrt{\rr_{\G_n}(1\leftrightarrow2)}$ and $c_2/\lambda_2(\bm L_{\G_n})$; the dotted curve shows $m_{n,L}$. Right: Normalized mean absolute error versus $L/L_c$ for
different values of $n$, with the common normalized prediction.} 
\label{fig:two-item-crossover}
\end{figure}

We next consider different scalings of $L$ as $n$ increases, and write $L_n$ to make this dependence explicit. For $L_n=\lceil n^{1/2}\rceil$, the nonlinear remainder is of order $n^{-1/2}$, while the linearization error is of the smaller order $n^{-3/4}$. For $L_n=\lceil n^{3/2}\rceil$, the linearization error is of order $n^{-5/4}$, while the nonlinear remainder is of the smaller order $n^{-3/2}$. When $L_n=\lceil 0.1n\rceil$, both terms are of order $n^{-1}$. The observed errors in \Cref{fig:two-item-sequences} follow these three predicted rates. Thus, each of the two graph-dependent terms in \eqref{main:bdd} can attain the leading order along an appropriate graph sequence. Taken together, these two experiments suggest that the estimation error bound in \eqref{main:bdd} is tight for general graphs up to logarithmic factors.

\begin{figure}[hbt!]
\centering
\includegraphics[width=.98\textwidth]{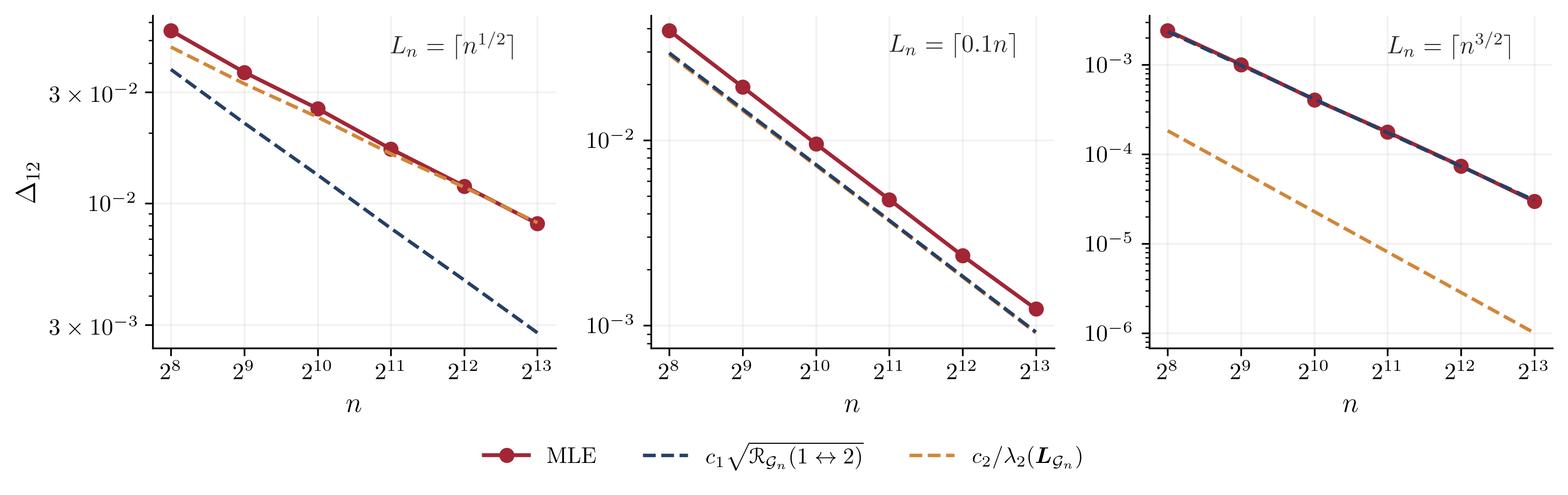}
\caption{Averaged absolute error for the pair $\{1,2\}$ as $n$ varies under three prescribed choices of $L_n$.
The dashed curves show $c_1\sqrt{\rr_{\G_n}(1\leftrightarrow2)}$ and $c_2/\lambda_2(\bm L_{\G_n})$, corresponding to
the linearized error and global remainder, respectively.}
\label{fig:two-item-sequences}
\end{figure}

\subsection{A tennis comparison network}

We consider a pairwise comparison network constructed from the publicly available ATP match data compiled and maintained by Jeff Sackmann through Tennis Abstract \citep{sackmann_tennis_atp}. We iteratively prune the graph to retain all players with at least 20 matches and keep the comparison outcomes strongly connected. After preprocessing the records from 1980--2024, we obtain a fixed comparison graph $\G_n$ with 1,076 players, 63,314 compared pairs, and 124,005 matches. We fit the BT model to the observed match outcomes and take the resulting MLE as the ground-truth utility vector $\bm u^*$, which has dynamic range $\kappa=4.96$. Keeping the observed comparison counts $m_{ij}$ fixed, we generate synthetic outcomes according to
\begin{align*}
Y_{ij}\sim\operatorname{Binomial}\!\left(m_{ij},\sigma(u_i^*-u_j^*)\right).
\end{align*}
This construction preserves the comparison structure and sampling heterogeneity of the ATP network while generating outcomes from a correctly specified BT model.

The left panel of \Cref{fig:tennis-era-structure} visualizes $\G_n$, with players colored according to their utilities. The comparison graph is highly unbalanced in degree, with degrees ranging from 21 to 1,503 and a median of 146. In contrast, the local geometry, as measured by the square root pairwise effective resistance, is more balanced, ranging from $0.095$ to $0.638$, with the central $90\%$ lying between $0.150$ and $0.448$, as shown in the right panel. The reciprocal spectral gap is $1/\lambda_2(\bm L_{\G_n^*})=0.494$. Thus, ignoring logarithmic factors, the linearization error and the global bound on the nonlinear remainder in \eqref{main:bdd} are of comparable order.

\begin{figure}[hbt!]
\centering
\includegraphics[width=.98\textwidth]{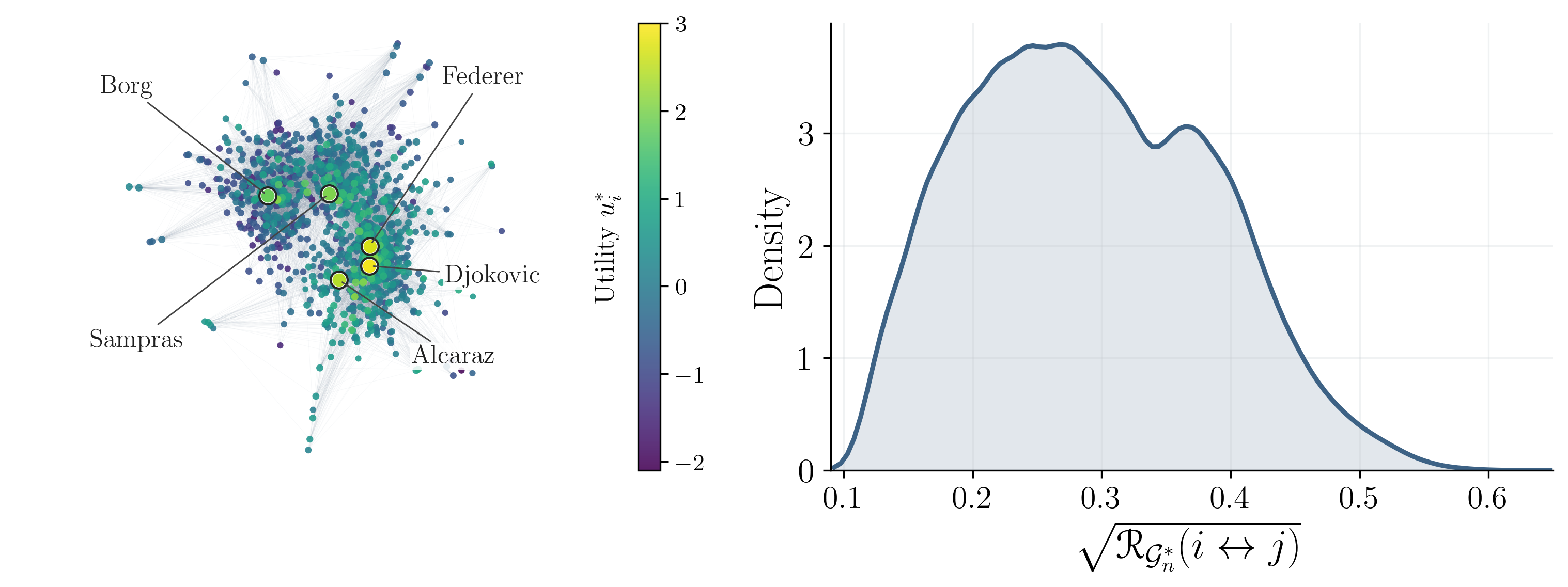}
\caption{ATP comparison network from 1980--2024. Left: Players colored by fitted utility. Right: Distribution of $\sqrt{\rr_{\G^*_n}(i\leftrightarrow j)}$ over all player pairs.}
\label{fig:tennis-era-structure}
\end{figure}

To further examine our asymptotic results, we sample 30 pairs from each of the lowest, middle, and highest fifths of the square-root effective resistance distribution and highlight three pairs from each group. In \Cref{fig:tennis-era-inference}, their mean absolute errors increase with the square-root effective resistance, qualitatively consistent with the bound in \Cref{thm:main}. We also construct the confidence intervals using \eqref{confidence_interval} for the nine highlighted pairs, including four pairs with no direct matches. Their coverage probabilities range from $0.936$ to $0.964$.

For these nine pairs, the mean absolute linearization residual is only $1.6\%$--$10.3\%$ of $\sqrt{\rr_{\G_n^*}(i\leftrightarrow j)}$, substantially smaller than the scale suggested by the reciprocal spectral gap. This is not surprising, since the spectral-gap term provides a uniform bound on the nonlinear remainder over all pairs and graphs and can therefore be conservative for a particular pair in some settings. Developing sharp pair-specific control of the nonlinear remainder could provide a sharper characterization of when the normal approximation is accurate and lead to less conservative inference. We leave this question for future work.

\begin{figure}[hbt!]
\centering
\includegraphics[width=.98\textwidth]{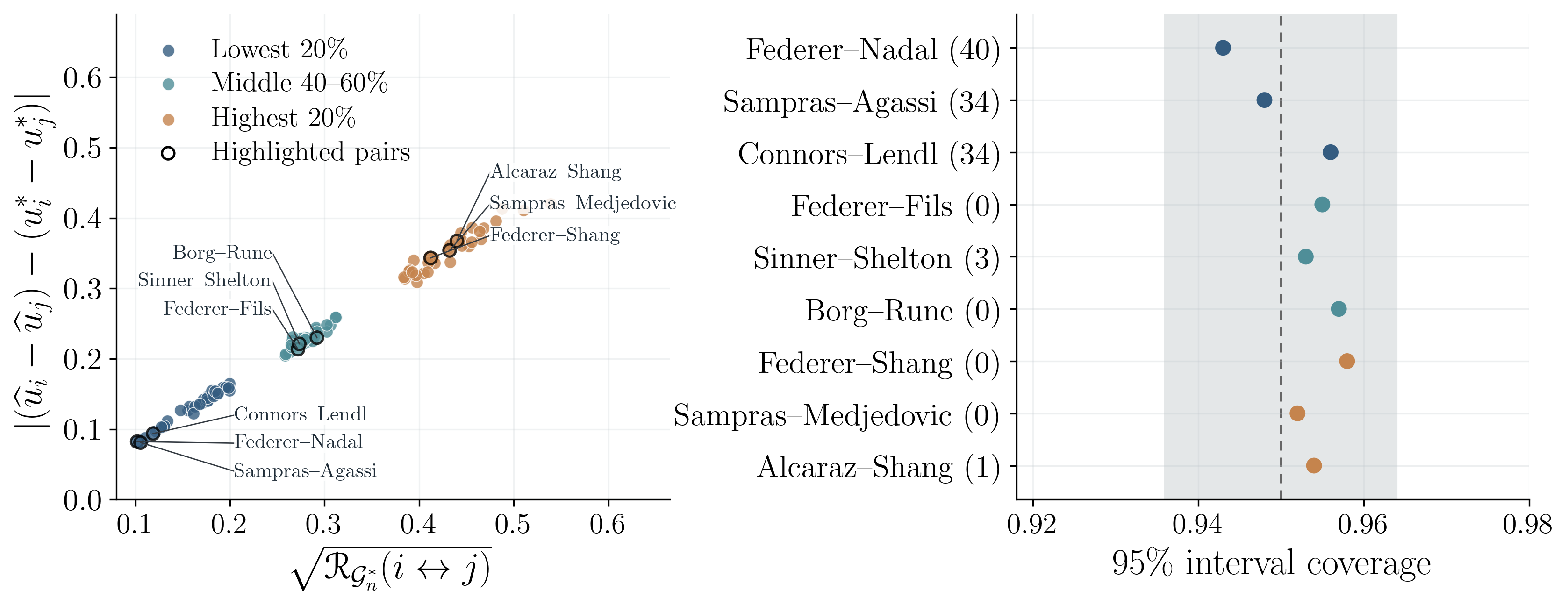}
\caption{Left: Mean absolute error for the $90$ selected pairs with the $9$ named pairs marked in black. Right: Coverage probabilities of $95\%$ confidence intervals for the named pairs, with direct match counts in parentheses. The gray band shows the $95\%$ confidence interval of mean coverage probability among all pairs.}
\label{fig:tennis-era-inference}
\end{figure}

\FloatBarrier

\section{Conclusion and future work}\label{sec:con}

This article introduces a new approach to establishing the uniform consistency and asymptotic normality of the MLE in the BT model under general comparison designs. We obtain a pairwise error bound consisting of a pair-specific term governed by the effective resistance between the pair and a nonlinear remainder controlled by the global spectral gap. Our results are tight for certain graph designs up to logarithmic factors and apply to a wide range of comparison graphs exhibiting substantial heterogeneity.

Our approach relies crucially on the electrical-network interpretation of the Hessian of the negative log-likelihood, together with certain regularity conditions, but not on the specific form of the logistic link function. It is therefore interesting to know whether our approach can be extended to more general pairwise-comparison models \cite{han2023general} or multiple comparison models such as the Plackett--Luce model \cite{MR0108411,plackett1975analysis}.  

On the other hand, our results suggest that, for inference under heterogeneous graph designs, debiasing may be necessary for pairs with particularly small effective resistance. Such pairs typically correspond to objects that are compared many times or are highly connected to the rest of the graph, and they are often of particular practical interest. Our control of the nonlinear remainder provides some insight into the resulting bias, but developing an exact debiasing procedure requires pair-specific asymptotic expansions, which we leave for future work.

\section*{Acknowledgment and AI use}
R. Han was supported by the Hong Kong Research Grants Council (No. 15302225) and the Hong Kong Polytechnic University (P0044617, P0045351). Y. Xu was partially supported by start-up funding from the University of Kentucky and NSF grant no. DMS-2607989. 

The authors acknowledge the use of generative AI in preparing this manuscript. GPT-6 was used to identify the example and specific parameter choices in \Cref{thm:bipartite}, which provide evidence for the sharpness of our error bound. GPT-6 was also used for proofreading, copy-editing, and assistance with the numerical simulations. All mathematical arguments and results presented in the manuscript were developed by the authors, who take full responsibility for their correctness.

\printbibliography

@article{baker2014dynamic,
  title={A dynamic paired comparisons model: Who is the greatest tennis player?},
  author={Baker, Rose D. and McHale, Ian G.},
  journal={Eur. J. Oper. Res.},
  volume={236},
  number={2},
  pages={677--684},
  year={2014},
  doi={10.1016/j.ejor.2013.12.028}
}

@inproceedings{hendrickx2019graph,
  title={Graph resistance and learning from pairwise comparisons},
  author={Hendrickx, Julien and Olshevsky, Alexander and Saligrama, Venkatesh},
  booktitle={ICML},
  pages={2702--2711},
  year={2019},
  organization={PMLR}
}

@inproceedings{hendrickx2020minimax,
  title={Minimax rate for learning from pairwise comparisons in the BTL model},
  author={Hendrickx, Julien and Olshevsky, Alex and Saligrama, Venkatesh},
  booktitle={ICML},
  pages={4193--4202},
  year={2020},
  organization={PMLR}
}

@inproceedings{bong2022generalized,
  title={Generalized results for the existence and consistency of the MLE in the Bradley-Terry-Luce model},
  author={Bong, Heejong and Rinaldo, Alessandro},
  booktitle={ICML},
  pages={2160--2177},
  year={2022},
  organization={PMLR}
}

@misc{sackmann_tennis_atp,
  author       = {Sackmann, Jeff},
  title        = {Tennis ATP: ATP Tennis Rankings, Results, and Stats},
  howpublished = {\url{https://github.com/JeffSackmann/tennis_atp}},
  note         = {Tennis Abstract. Licensed under CC BY-NC-SA 4.0},
  year         = {2026}
}

@book{brouwer2012spectra,
  author    = {Brouwer, Andries E. and Haemers, Willem H.},
  title     = {Spectra of Graphs},
  series    = {Universitext},
  publisher = {Springer},
  address   = {New York},
  year      = {2012}
}

@article{chen2025item,
  title={Item response theory—A statistical framework for educational and psychological measurement},
  author={Chen, Yunxiao and Li, Xiaoou and Liu, Jingchen and Ying, Zhiliang},
  journal={Stat. Sci.},
  volume={40},
  number={2},
  pages={167--194},
  year={2025},
  publisher={Institute of Mathematical Statistics}
}

@article{holland1983stochastic,
  title={Stochastic blockmodels: First steps},
  author={Holland, Paul W and Laskey, Kathryn Blackmond and Leinhardt, Samuel},
  journal={Soc. Networks},
  volume={5},
  number={2},
  pages={109--137},
  year={1983},
  publisher={Elsevier}
}

@inproceedings{lee2026entrywise,
  title={Entrywise Error Bounds for Spectral Ranking with Semi-Random Adversaries},
  author={Lee, Dongmin and Makur, Anuran and Singh, Japneet},
  booktitle={SIGKDD},
  pages={2449--2460},
  year={2026}
}

@article{hamilton2025many,
  title={The many routes to the ubiquitous Bradley--Terry model},
  author={Hamilton, Ian Allister and Tawn, Nicholas G. and Firth, David},
  journal={Stat. Sci.},
  year={2025},
  note={In press},
  issn={0883-4237}
}

@article{bozoki2016application,
  title={An application of incomplete pairwise comparison matrices for ranking top tennis players},
  author={Boz{\'o}ki, S{\'a}ndor and Csat{\'o}, L{\'a}szl{\'o} and Temesi, J{\'o}zsef},
  journal={Eur. J. Oper. Res.},
  volume={248},
  number={1},
  pages={211--218},
  year={2016},
  doi={10.1016/j.ejor.2015.06.069}
}

@incollection{mcfadden1973conditional,
  title={Conditional logit analysis of qualitative choice behavior},
  author={McFadden, Daniel},
  booktitle={Frontiers in Econometrics},
  editor={Zarembka, Paul},
  pages={105--142},
  publisher={Academic Press},
  address={New York},
  year={1974}
}

@inproceedings{Christiano2017,
  title={Deep Reinforcement Learning from Human Preferences},
  author={Christiano, Paul F. and Leike, Jan and Brown, Tom B. and Martic, Miljan and Legg, Shane and Amodei, Dario},
  booktitle={NeurIPS},
  volume={30},
  pages={4299--4307},
  year={2017}
}

@inproceedings{Rafailov2023,
  title={Direct Preference Optimization: Your Language Model Is Secretly a Reward Model},
  author={Rafailov, Rafael and Sharma, Archit and Mitchell, Eric and Ermon, Stefano and Manning, Christopher D. and Finn, Chelsea},
  booktitle={NeurIPS},
  volume={36},
  pages={53728--53741},
  year={2023}
}

@inproceedings{Sun2025,
  title={Rethinking Reward Modeling in Preference-based Large Language Model Alignment},
  author={Sun, Hao and Shen, Yunyi and Ton, Jean-Francois},
  booktitle={ICLR},
  year={2025}
}

@article{chung1998isoperimetric,
  title={Isoperimetric inequalities for cartesian products of graphs},
  author={Chung, Fan RK and Tetali, Prasad},
  journal={Comb. Probab. Comput.},
  volume={7},
  number={2},
  pages={141--148},
  year={1998},
  publisher={Cambridge University Press}
}

@incollection{mohar1993eigenvalues,
  title={Eigenvalues in Combinatorial Optimization},
  author={Mohar, Bojan and Poljak, Svatopluk},
  booktitle={Combinatorial and Graph-Theoretical Problems in Linear Algebra},
  editor={Brualdi, Richard A. and Friedland, Shmuel and Klee, Victor},
  series={IMA Volumes in Mathematics and Its Applications},
  volume={50},
  pages={107--151},
  publisher={Springer},
  year={1993}
}

@article{mohar1989isoperimetric,
  title={Isoperimetric numbers of graphs},
  author={Mohar, Bojan},
  journal={J. Comb. Theory, B.},
  volume={47},
  number={3},
  pages={274--291},
  year={1989},
  publisher={Elsevier}
}

@article{devriendt2022discrete,
  author  = {Devriendt, Karel and Lambiotte, Renaud},
  title   = {Discrete Curvature on Graphs from the Effective Resistance},
  journal = {J. Phys.: Complex},
  volume  = {3},
  number  = {2},
  pages   = {025008},
  year    = {2022},
  doi     = {10.1088/2632-072X/ac730d}
}

@book{lyons2016probability,
  author    = {Lyons, Russell and Peres, Yuval},
  title     = {Probability on Trees and Networks},
  series    = {Cambridge Series in Statistical and Probabilistic Mathematics},
  volume    = {42},
  publisher = {Cambridge University Press},
  address   = {New York},
  year      = {2016},
  pages     = {xv+699},
  isbn      = {978-1-107-16015-6},
  doi       = {10.1017/9781316672815}
}

@article{wainwright2009sharp,
  title={Sharp thresholds for High-Dimensional and noisy sparsity recovery using $\ell_1$-Constrained Quadratic Programming (Lasso)},
  author={Wainwright, Martin J},
  journal={IEEE Trans. Inf. Theory},
  volume={55},
  number={5},
  pages={2183--2202},
  year={2009},
  publisher={IEEE}
}

@article{fang2026recent,
  title={Recent advances in the Bradley--Terry Model: theory, algorithms, and applications},
  author={Fang, Shuxing and Han, Ruijian and Luo, Yuanhang and Xu, Yiming},
  journal={arXiv preprint arXiv:2601.14727},
  year={2026}
}

@article{dong2026statistical,
  title={Statistical ranking with dynamic covariates},
  author={Dong, Pinjun and Han, Ruijian and Jiang, Binyan and Xu, Yiming},
  journal={J. R. Stat. Soc., B: Stat. Methodol},
  volume={88},
  number={1},
  pages={221--238},
  year={2026},
  publisher={Oxford University Press UK}
}

@inproceedings{yang2024top,
  title={Top-$ K $ ranking with a monotone adversary},
  author={Yang, Yuepeng and Chen, Antares and Orecchia, Lorenzo and Ma, Cong},
  booktitle={COLT},
  pages={5123--5162},
  year={2024},
  organization={PMLR}
}

@inproceedings{chandra1989electrical,
  title={The electrical resistance of a graph captures its commute and cover times},
  author={Chandra, Ashok K and Raghavan, Prabhakar and Ruzzo, Walter L and Smolensky, Roman},
  booktitle={STOC},
  pages={574--586},
  year={1989}
}

@article{chen2023ranking,
  title={Ranking from pairwise comparisons in general graphs and graphs with locality},
  author={Chen, Yanxi},
  journal={arXiv preprint arXiv:2304.06821},
  year={2023}
}

@article{chen2022partial,
  title={Partial recovery for top-k ranking: optimality of MLE and suboptimality of the spectral method},
  author={Chen, Pinhan and Gao, Chao and Zhang, Anderson Y},
  journal={Ann. Statist.},
  fjournal={The Annals of Statistics},
  volume={50},
  number={3},
  pages={1618--1652},
  year={2022},
  publisher={Institute of Mathematical Statistics}
}

@article{han2024statistical,
  title={Statistical inference for pairwise comparison models},
  author={Han, Ruijian and Tang, Wenlu and Xu, Yiming},
  journal={arXiv preprint arXiv:2401.08463},
  year={2024}
}

@article{fan2023spectral,
  title={Spectral ranking inferences based on general multiway comparisons},
  author={Fan, Jianqing and Lou, Zhipeng and Wang, Weichen and Yu, Mengxin},
  journal={Oper. Res.},
  volume={74},
  number={1},
  pages={161--180},
  year={2025},
  publisher={INFORMS}
}

@article{han2026convergence,
  title={Convergence analysis of a family of Zermelo-type iterations for the Bradley--Terry model},
  author={Han, Ruijian and Lu, Ding and Xu, Yiming},
  journal={arXiv preprint arXiv:2607.22221},
  year={2026}
}

@article{han2023unified,
  title={A unified analysis of likelihood-based estimators in the {P}lackett--{L}uce model},
  author={Han, Ruijian and Xu, Yiming},
  journal={Ann. Statist.},
  volume={53},
  number={5},
  pages={2077--2102},
  year={2025},
  doi={10.1214/25-AOS2530}
}

@article{von2014hitting,
	AUTHOR = {von Luxburg, Ulrike and Radl, Agnes and Hein, Matthias},
	TITLE = {Hitting and commute times in large random neighborhood graphs},
	JOURNAL = {J. Mach. Learn. Res.},
	FJOURNAL = {Journal of Machine Learning Research},
	VOLUME = {15},
	YEAR = {2014},
	PAGES = {1751--1798},
	ISSN = {1532-4435},
	MRCLASS = {05C80 (60C05)},
	MRNUMBER = {3225242},
	MRREVIEWER = {Anna Torriero},
}

@article{gao2023uncertainty,
	AUTHOR = {Gao, Chao and Shen, Yandi and Zhang, Anderson Y.},
	TITLE = {Uncertainty quantification in the {B}radley-{T}erry-{L}uce
	model},
	JOURNAL = {Inf. Inference},
	FJOURNAL = {Information and Inference. A Journal of the IMA},
	VOLUME = {12},
	YEAR = {2023},
	NUMBER = {2},
	PAGES = {1073--1140},
	ISSN = {2049-8764},
	MRCLASS = {62F07 (62F12 62F25 62H12)},
	MRNUMBER = {4565758},
}

@article{han2023general,
  title={A general pairwise comparison model for extremely sparse networks},
  author={Han, Ruijian and Xu, Yiming and Chen, Kani},
   JOURNAL = {J. Amer. Statist. Assoc.},
	FJOURNAL = {Journal of the American Statistical Association},
    volume={118},
  number={544},
  pages={2422--2432},
  year={2023},
  publisher={Taylor \& Francis}
}

@article {MR0070925,
	AUTHOR = {Bradley, Ralph Allan and Terry, Milton E.},
	TITLE = {Rank analysis of incomplete block designs. {I}. {T}he method
	of paired comparisons},
	JOURNAL = {Biometrika},
	FJOURNAL = {Biometrika},
	VOLUME = {39},
	YEAR = {1952},
	PAGES = {324--345},
	ISSN = {0006-3444},
	MRCLASS = {62.0X},
	MRNUMBER = {0070925},
	MRREVIEWER = {W. S. Connor},
}

@article{thurstone1927law,
	title={A law of comparative judgment.},
	author={Thurstone, Louis L},
	journal={Psychol. Rev.},
	volume={34},
	number={4},
	pages={273--286},
	year={1927},
	publisher={Psychological Review Company}
}

@article{MR1724040,
    AUTHOR = {Simons, Gordon and Yao, Yi-Ching},
TITLE = {Asymptotics when the number of parameters tends to infinity in
the {B}radley-{T}erry model for paired comparisons},
JOURNAL = {Ann. Statist.},
FJOURNAL = {The Annals of Statistics},
  volume={27},
  number={3},
  pages={1041--1060},
  year={1999},
  publisher={Institute of Mathematical Statistics},
  MRCLASS = {62J15 (62E20 62F05 62F12)},
  MRNUMBER = {1724040}
}

@article{MR2987494,
    AUTHOR = {Yan, Ting and Yang, Yaning and Xu, Jinfeng},
TITLE = {Sparse paired comparisons in the {B}radley-{T}erry model},
JOURNAL = {Statist. Sinica},
FJOURNAL = {Statistica Sinica},
VOLUME = {22},
YEAR = {2012},
NUMBER = {3},
PAGES = {1305--1318},
ISSN = {1017-0405},
MRCLASS = {62J15},
MRNUMBER = {2987494}
}

@article {MR3953449,
	AUTHOR = {Chen, Yuxin and Fan, Jianqing and Ma, Cong and Wang, Kaizheng},
	TITLE = {Spectral method and regularized {MLE} are both optimal for
	top-{$K$} ranking},
	JOURNAL = {Ann. Statist.},
	FJOURNAL = {The Annals of Statistics},
	VOLUME = {47},
	YEAR = {2019},
	NUMBER = {4},
	PAGES = {2204--2235},
	ISSN = {0090-5364},
	MRCLASS = {62F07 (60J20 62B10)},
	MRNUMBER = {3953449}
}

@article{han2020asymptotic,
    AUTHOR = {Han, Ruijian and Ye, Rougang and Tan, Chunxi and Chen, Kani},
TITLE = {Asymptotic theory of sparse {B}radley-{T}erry model},
JOURNAL = {Ann. Appl. Probab.},
FJOURNAL = {The Annals of Applied Probability},
VOLUME = {30},
YEAR = {2020},
NUMBER = {5},
PAGES = {2491--2515},
ISSN = {1050-5164},
MRCLASS = {60F05 (62E20 62F12 62J15)},
MRNUMBER = {4149535}
}

@article{MR3504618,
      AUTHOR = {Shah, Nihar B. and Balakrishnan, Sivaraman and Bradley, Joseph
  and Parekh, Abhay and Ramchandran, Kannan and Wainwright,
  Martin J.},
  TITLE = {Estimation from pairwise comparisons: sharp minimax bounds
  with topology dependence},
  JOURNAL = {J. Mach. Learn. Res.},
  FJOURNAL = {Journal of Machine Learning Research (JMLR)},
  VOLUME = {17},
  YEAR = {2016},
  PAGES = {1--47},
  ISSN = {1532-4435},
  MRCLASS = {62J15 (62C20 62F07)},
  MRNUMBER = {3504618}

}

@article {MR1545015,
	AUTHOR = {Zermelo, E.},
	TITLE = {Die {B}erechnung der {T}urnier-{E}rgebnisse als ein
	{M}aximumproblem der {W}ahrscheinlichkeitsrechnung},
	JOURNAL = {Math. Z.},
	FJOURNAL = {Mathematische Zeitschrift},
	VOLUME = {29},
	YEAR = {1929},
	NUMBER = {1},
	PAGES = {436--460},
	ISSN = {0025-5874},
	MRCLASS = {DML},
	MRNUMBER = {1545015},
}

@article {MR0097876,
	AUTHOR = {Ford, Jr., L. R.},
	TITLE = {Solution of a ranking problem from binary comparisons},
	JOURNAL = {Amer. Math. Monthly},
	FJOURNAL = {The American Mathematical Monthly},
	VOLUME = {64},
	YEAR = {1957},
	NUMBER = {8},
	PAGES = {28--33},
	ISSN = {0002-9890},
	MRCLASS = {62.00},
	MRNUMBER = {0097876},
	MRREVIEWER = {I. R. Savage},
}

@article {MR0125031,
	AUTHOR = {Erd\H{o}s, P. and R\'enyi, A.},
	TITLE = {On the evolution of random graphs},
	JOURNAL = {Magyar Tud. Akad. Mat. Kutat\'{o} Int. K\"{o}zl.},
	FJOURNAL = {A Magyar Tudom\'{a}nyos Akad\'{e}mia. Matematikai Kutat\'{o} Int\'{e}zet\'{e}nek
	K\"{o}zlem\'{e}nyei},
	VOLUME = {5},
	YEAR = {1960},
	PAGES = {17--61},
	ISSN = {0541-9514},
	MRCLASS = {05.40},
	MRNUMBER = {0125031},
	MRREVIEWER = {John Riordan},
}

@book{vershynin2018high,
  title={High-dimensional probability: An introduction with applications in data science},
  author={Vershynin, Roman},
  volume={47},
  year={2018},
  publisher={Cambridge University Press}
}

@article {plackett1975analysis,
	AUTHOR = {Plackett, R. L.},
	TITLE = {The analysis of permutations},
	JOURNAL = {J. Roy. Statist. Soc. Ser. C},
	FJOURNAL = {Journal of the Royal Statistical Society. Series C. Applied
	Statistics},
	VOLUME = {24},
	YEAR = {1975},
	NUMBER = {2},
	PAGES = {193--202},
	ISSN = {0035-9254},
	MRCLASS = {62E10 (62P25)},
	MRNUMBER = {391338},
}

@book {MR0108411,
	AUTHOR = {Luce, R. Duncan},
	TITLE = {Individual choice behavior: {A} theoretical analysis},
	PUBLISHER = {John Wiley \& Sons, Inc., New York; Chapman \& Hall Ltd.
	London},
	YEAR = {1959},
	PAGES = {xii+153},
	MRREVIEWER = {L. J. Savage},
}

@inproceedings{pmlr-v180-li22g,
  title     = {$\ell_{\infty}$-Bounds of the {MLE} in the {BTL} Model under General Comparison Graphs},
  author    = {Li, Wanshan and Shrotriya, Shamindra and Rinaldo, Alessandro},
  booktitle = {UAI},
  volume    = {180},
  pages     = {1178--1187},
  year      = {2022},
  publisher = {PMLR},
  url       = {https://proceedings.mlr.press/v180/li22g.html}
}

\end{document}